\def\@tocline#1#2#3#4#5#6#7{\relax
\ifnum #1>\c@tocdepth 
\else
\par \addpenalty\@secpenalty\addvspace{#2}%
\begingroup \hyphenpenalty\@M
\@ifempty{#4}{%
\@tempdima\csname r@tocindent\number#1\endcsname\relax
}{%
\@tempdima#4\relax
}%
\parindent\z@ \leftskip#3\relax \advance\leftskip\@tempdima\relax
\rightskip\@pnumwidth plus4em \parfillskip-\@pnumwidth
#5\leavevmode\hskip-\@tempdima
\ifcase #1
\or\or \hskip 1em \or \hskip 2em \else \hskip 3em \fi%
#6\nobreak\relax
\dotfill\hbox to\@pnumwidth{\@tocpagenum{#7}}\par
\nobreak
\endgroup
\fi}
\newtheorem{theorem}{Theorem} 
\newtheorem{prop}{Proposition}
\newtheorem{definition}{Definition}
\newtheorem{lemma}{Lemma}
\newtheorem{corollary}{Corollary}
\newtheorem{example}{Example}
\newtheorem*{theorem*}{Theorem}
\numberwithin{equation}{section}
\numberwithin{theorem}{section}
\numberwithin{lemma}{section}
\numberwithin{corollary}{section}
\numberwithin{example}{section}
\title{The Asymptotics of Representations for Cyclic Opers}
\author{Jorge Acosta }
\date{\today}
\begin{document}

\begin{abstract}
Given a Riemann surface $X = (\Sigma, J)$ we find an expression for the dominant term for the asymptotics of the holonomy of opers over that Riemann surface corresponding to rays in the Hitchin base of the form $(0,0,\cdots,t\omega_n)$. Moreover, we find an associated equivariant map from the universal cover $(\tilde{\Sigma},\tilde{J})$ to the symmetric space SL$_n(\mathbb{C}) / \mbox{SU}(n)$ and show that limits of these maps tend to a sub-building in the asymptotic cone. That sub-building is explicitly constructed from the local data of $\omega_n$.
\end{abstract}
\maketitle
\tableofcontents
\section{Introduction}
\label{intro}

Let $\Sigma_g$ be a compact surface of genus $g \geq 2$ and $X = (\Sigma_g, J)$ be a Riemann surface structure on $\Sigma_g$. The SL$_n(\mathbb{C})$ character variety is defined to be $$\mathcal{M}_B^{(n)} = \mbox{Hom}(\pi_1(X),\mbox{SL}_n(\mathbb{C}))//\mbox{SL}_n(\mathbb{C}).$$ We study the space of opers, a subspace of $\mathcal{M}_B^{(n)}$ defined in terms of ordinary differential equations on X. The Hitchin base, a vector space of differentials, parametrizes the space $Op(n)$, of SL$_n-$opers on X and the map that sends an oper to its holonomy H: $Op(n) \rightarrow \mathcal{M}_B^{(n)}$ is a proper embedding (see \cite{Wentworth} for details). The goal of this paper is to understand the asymptotics of this map. That is, given a ray v$: \mathbb{R} \rightarrow Op(n)$, what can we say about the asymptotics of corresponding curve H(v(t)) in $\mathcal{M}_B^{(n)}$? We will focus on the case $$\mbox{v}(t) = (0,... , t\omega_n)$$ where the holomorphic $n$ differential $\omega_n \in \mbox{H}^0(X,\mathcal{K}^n)$ on $X$ has simple zeros. The holonomy of SL$_n-$opers (described more completely in section \ref{bg}) is the holonomy of a differential equation on the universal cover $\tilde{X}$. By understanding the asymptotics of the solutions of the differential equation, we can use this information to compute the asymptotics of the resulting holonomy. There are two statements one can make about the asymptotics of the holonomy an analytic statement and a geometric statement.  We relate the asymptotics of the holonomy to the local geometry of the defining holomorphic differential and to the Stokes matrices (section \ref{bg}) of the differential equation. More precisely, given a holomorphic $n$ differential $\phi$ and a curve $\gamma$ on $\tilde{X}$, if we define $E_t(\gamma,\phi) = e^{t^{\frac{1}{n}} \int_{\gamma} A(\phi)}$, where $A(\phi)$ = diag$(\phi^{\frac{1}{n}}, \cdots, \lambda^{n-1}\phi^{\frac{1}{n}})$ we can compute the asymptotics of the holonomy in terms of the matrices $E_t(\gamma,\phi)$. That is, we have the following analytic theorem. 

\begin{theorem}

\label{thm1}
Let $\rho_t$ be the holonomy associated to $(0,... , t\omega_n)$ by the map above and $[\gamma] \in \pi_1(X)$. Then there exists a decomposition of $\gamma = \gamma_1 +\cdots+ \gamma_N$ and a collections of Stokes matrices $A_i$ so that $$\lim_{t \to \infty}\rho_t([\gamma])( \prod_{i=1}^{N} E_t(\gamma_{i} ,\omega_{n} )A_i)^{-1} = I_n$$
\end{theorem}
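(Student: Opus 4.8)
The plan is to realize $\rho_t([\gamma])$ as the monodromy of an explicit rank-$n$ linear ODE on $\tilde X$ carrying a large parameter $t$, and then to run a WKB analysis whose errors are controlled arc by arc along a representative of $[\gamma]$ adapted to the Stokes graph of $\omega_n$. In a local coordinate $z$, writing $\omega_n = W(z)\,dz^n$, the oper attached to $(0,\dots,0,t\omega_n)$ is governed --- to leading order in $t$ --- by the scalar equation $f^{(n)} + (\text{lower-order, } t\text{-independent terms}) + tW(z)f = 0$, and passing to its companion system for the column vector $(f,f',\dots,f^{(n-1)})$ presents $\rho_t([\gamma])$ as the transport of the associated flat connection along a lift $\tilde\gamma$ of $\gamma$. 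I would first rescale by $\operatorname{diag}(1,t^{1/n},\dots,t^{(n-1)/n})$ and then conjugate by the matrix $T(z)$ diagonalizing the leading symbol --- a Vandermonde in the roots $\mu_0,\dots,\mu_{n-1}$ of $\mu^n + W = 0$, i.e.\ $\mu_j = \lambda^j$ times a branch of $\omega_n^{1/n}$, single-valued away from the zeros of $\omega_n$. This brings the system to the standard form $\Phi' = \big(t^{1/n}D(z) + R(z)\big)\Phi$ with $D = \operatorname{diag}(\mu_0,\dots,\mu_{n-1})$ and $R$ built from $T^{-1}T'$ together with contributions that vanish as $t\to\infty$, bounded on compact sets disjoint from the zeros of $\omega_n$; note that $t^{1/n}\!\int D$ is, up to the paper's conventions for $\omega_n^{1/n}$, precisely the exponent defining $E_t$.

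Next I would analyse the two kinds of pieces of $\tilde\gamma$. On a simply connected arc $\gamma_i$ avoiding the zeros of $\omega_n$ along which $\operatorname{Re}\int(\mu_j - \mu_k)$ is monotone for every pair $j\neq k$ --- a WKB-admissible arc --- a Levinson-type asymptotic integration theorem (see, e.g., Wasow's book) shows that the transport of $\Phi' = (t^{1/n}D + R)\Phi$ along $\gamma_i$ equals $(I_n + o(1))\exp\!\big(t^{1/n}\int_{\gamma_i}D\big)$ as $t\to\infty$; undoing the diagonalizing gauge, whose endpoint values are absorbed into the neighbouring Stokes matrices, identifies the transport along $\gamma_i$ with $(I_n + o(1))E_t(\gamma_i,\omega_n)$. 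At a crossing of the Stokes graph the WKB bases on the two sides are related by a definite matrix $A_i$: along a regular edge this is an elementary unipotent, while at a simple zero $p$ of $\omega_n$ the scaling $z = p + t^{-1/(n+1)}\zeta$ balances $\partial_z^n$ against $tW$, the inner limit being the generalized Airy equation $g^{(n)} + c\,\zeta\,g = 0$ whose Stokes data about $\zeta = 0$ is classical and explicit, so matching the outer WKB bases across the $n+1$ inner sectors gives $A_i$ as a product of these generalized Airy Stokes matrices with the permutation recording how the $\mu_j$ are relabeled across the branch cut; in each case the discrepancy tends to $I_n$.

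Finally I would homotope $[\gamma]$ to a representative $\gamma = \gamma_1 + \dots + \gamma_N$ whose arcs are WKB-admissible and whose junctions are the Stokes-graph crossings just described --- possible because the complement of the zeros of $\omega_n$ is covered by the Stokes regions of the foliation $\operatorname{Im}\int\omega_n^{1/n} = \text{const}$, onto whose generic trajectories any loop can be pushed while subdividing at each zero or separating trajectory it meets --- and then multiply the estimates above to get $\rho_t([\gamma]) = \prod_{i=1}^N \big((I_n + o(1))E_t(\gamma_i,\omega_n)A_i\big)$. The main obstacle is this last composition: the factors $E_t(\gamma_i,\omega_n)$ have entries that grow and decay exponentially in $t^{1/n}$, so one cannot simply pull the $(I_n + o(1))$ terms to one side. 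The point is that along a WKB-admissible arc the conjugate $E_t(\gamma_i,\omega_n)^{-1}(I_n + o(1))E_t(\gamma_i,\omega_n)$ still converges to $I_n$, because its off-diagonal errors are damped by the monotone $\operatorname{Re}\int(\mu_j - \mu_k)$; making this work simultaneously for all pairs $(j,k)$ and all arcs is exactly what forces the decomposition to be subordinate to the Stokes graph of $\omega_n$ and pins down which $A_i$ occurs at each junction --- this is the sense in which the answer is built from the local data of $\omega_n$. Proving that such an admissible decomposition always exists and that the shuffled error genuinely tends to $I_n$ is the technical core of the argument.
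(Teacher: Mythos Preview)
Your overall strategy---WKB along arcs away from zeros, generalized-Airy matching at simple zeros, Stokes factors at crossings---is exactly the paper's. The gap is in your endgame. You write the transport along each arc as $(I_n+o(1))E_t(\gamma_i,\omega_n)$ and then propose to commute the errors to one side, asserting that along a WKB-admissible arc $E_t^{-1}(I_n+o(1))E_t\to I_n$ because monotone $\mathrm{Re}\!\int(\mu_j-\mu_k)$ damps the off-diagonals. That assertion is false as stated: conjugation by $E_t$ multiplies the $(j,k)$ entry of the error by $\exp\bigl(t^{1/n}\!\int(\mu_k-\mu_j)\bigr)$, so for every pair exactly one of the $(j,k)$ and $(k,j)$ entries is \emph{amplified} exponentially; the $o(1)$ you get from Wasow/Levinson is only $O(t^{-1/n})$ and carries no triangular structure that would rescue this. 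The difficulty you correctly flag as ``the technical core'' is real, and the mechanism you sketch for it does not close.

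The paper sidesteps this entirely by never multiplying approximations. On each $U_k$ it fixes an \emph{exact} fundamental solution $Y_k(z,\epsilon)$ normalized so that $Y_k(z,\epsilon)e^{-\epsilon^{-1}\!\int_{x_k}^{z}B_0}\to\omega_n^{(1-n)/(2n)}P_0$ on $U_k$. When $U_k,U_{k+1}$ are separated by a Stokes line $l_{\alpha\beta}$ through $q$, one has the \emph{exact} relation $Y_{k+1}=Y_k\,A_{q_k}$ with $A_{q_k}=e^{-\epsilon^{-1}\!\int_{x_k}^{q}B_0}A_{\alpha\beta}\,e^{-\epsilon^{-1}\!\int_{q}^{x_{k+1}}B_0}$; iterating yields $Y_N=Y_0\,M(\epsilon)$ with $M(\epsilon)=\prod E_t(\gamma_i,\omega_n)A_i$ an identity between genuine solutions, not an asymptotic one. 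The holonomy formula then reads $\rho_\epsilon([\gamma])M(\epsilon)^{-1}=Y_0(z_0,\epsilon)^{-1}N(\gamma,\epsilon)\,Y_N(\gamma(z_0),\epsilon)$, and the limit $\epsilon\to0$ is taken \emph{once}, at the two endpoints, where the normalizing asymptotics of $Y_0$ and $Y_N$ apply directly and the $P_0$ factors cancel against $N(\gamma,\epsilon)$ via $\omega_n(\gamma(z))\gamma'^{\,n}=\omega_n(z)$. No error is ever pushed through an exponential. One further omission: the paper also handles \emph{secondary} Stokes lines, born where Stokes lines from distinct zeros cross, which carry commutator-type factors and must be among your $A_i$ for the product to be consistent; your proposal does not mention them.
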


For each $t \in \mathbb{R}$, the oper defined by $t\omega_n$ gives both a holonomy $\rho_t$ and a differential equation $D_t(y) = 0$. The holonomy $\rho_t$ can be represented geometrically by a $\rho_t$ equivariant map to the associated symmetric space SL$_n(\mathbb{C})/\mbox{SU}(n)$. We then can understand the asymptotics of $\rho_t$ geometrically by the limits of the equivariant maps. We first show the following statement. 

\begin{prop}
In terms of the solutions to $D_t(y) =0$ there exists an explicit map $$Ep_t: \tilde{X} \setminus Z_{\omega_n} \rightarrow SL_n(\mathbb{C})/SU(n)$$ which is $\rho_t$ equivariant.
\end{prop}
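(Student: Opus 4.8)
The plan is to realize $SL_n(\mathbb{C})/SU(n)$ concretely as the space
$$\mathcal{P}_n \;=\; \{\, H \in SL_n(\mathbb{C}) \;:\; H^{\ast} = H,\ H > 0 \,\}$$
of positive-definite Hermitian matrices of determinant one, on which $SL_n(\mathbb{C})$ acts isometrically by $g\cdot H = gHg^{\ast}$ with $\mathrm{Stab}(I)=SU(n)$. Under this identification, producing a $\rho_t$-equivariant map $\tilde{X}\setminus Z_{\omega_n}\to SL_n(\mathbb{C})/SU(n)$ is the same as producing a $\rho_t$-equivariant assignment, to each $z$, of a Hermitian inner product on the solution space $S_t$ of $D_t(y)=0$; this inner product will be manufactured from the jets of solutions.

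First I would exploit the local geometry of $\omega_n$: away from $Z_{\omega_n}$ the differential admits a natural parameter, i.e.\ a local holomorphic coordinate $w$ with $\omega_n=(dw)^n$, unique up to $w\mapsto\zeta w+c$ with $\zeta^n=1$. Rewriting $D_t$ in the $w$-coordinate brings it to the trace-free form $\partial_w^n y + (\text{lower order}) + t\,y = 0$, the lower-order terms being the Schwarzian-type corrections arising from the twisting of $y$ as a section of $\mathcal{K}^{-(n-1)/2}$; these are holomorphic precisely on $\tilde{X}\setminus Z_{\omega_n}$. I then define the osculating frame $\mathcal{F}_t(z)\colon S_t\to\mathbb{C}^n$ by $\mathcal{F}_t(z)(y)=\big(y(z),\partial_w y(z),\dots,\partial_w^{\,n-1}y(z)\big)$ and set
$$Ep_t(z) \;:=\; \big(\mathcal{F}_t(z)^{\ast}\mathcal{F}_t(z)\big)^{-1}\in\mathcal{P}_n.$$
Three checks then establish that this is a well-defined smooth map. (i) By Abel's identity the absence of a $\partial_w^{n-1}$ term forces the Wronskian $\det\mathcal{F}_t$ to be constant; normalizing it to $1$ makes $\rho_t$ take values in $SL_n(\mathbb{C})$ and puts $Ep_t(z)$ in $\mathcal{P}_n$. (ii) The residual ambiguity $w\mapsto\zeta w+c$ does not affect the jet at $z$ under translations, while under $w\mapsto\zeta w$ it right-multiplies $\mathcal{F}_t(z)$ by $\mathrm{diag}(1,\zeta^{-1},\dots,\zeta^{-(n-1)})$ up to a fixed power of $\zeta$ from the trivialization of $\mathcal{K}^{-(n-1)/2}$, a matrix of modulus one, so $\mathcal{F}_t(z)^{\ast}\mathcal{F}_t(z)$ is unchanged. (iii) Smoothness of $Ep_t$ follows from holomorphic dependence of the solutions on $w$. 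One also checks that $Ep_t$ genuinely fails to extend across $Z_{\omega_n}$: there the change of coordinates between $w$ and a holomorphic coordinate on $\tilde{X}$ degenerates and $\mathcal{F}_t$ blows up, which is exactly why the domain is $\tilde{X}\setminus Z_{\omega_n}$.

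Equivariance is then essentially formal. For a deck transformation $\gamma\in\pi_1(X)$, the coefficients of $D_t$ are built from $\omega_n$, which descends to $X$, so $\gamma^{\ast}$ preserves $S_t$ and acts there through $\rho_t(\gamma)$; since $\gamma^{\ast}\omega_n=\omega_n$, a natural parameter near $z$ pulls back to one near $\gamma z$, and tracking the osculating frame yields $\mathcal{F}_t(\gamma z)=\mathcal{F}_t(z)\circ\gamma^{\ast}$, i.e.\ $\mathcal{F}_t(\gamma z)=\mathcal{F}_t(z)\,\rho_t(\gamma)$ in the chosen basis. Hence
$$Ep_t(\gamma z) \;=\; \rho_t(\gamma)\,Ep_t(z)\,\rho_t(\gamma)^{\ast},$$
which is the required $\rho_t$-equivariance for the action $g\cdot H = gHg^{\ast}$ on $\mathcal{P}_n$ (after the usual bookkeeping of left/right and inverse conventions in the definition of the holonomy). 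The only real content, and the step I expect to take the most care, is precisely this bookkeeping in (ii) and in the equivariance computation — pinning down exactly how $\mathcal{F}_t$ transforms under the $n$-th root of unity ambiguity in the natural parameter and under $\gamma$, and verifying compatibility with the Wronskian normalization; the remaining ingredients are standard linear ODE theory together with the identification $SL_n(\mathbb{C})/SU(n)\cong\mathcal{P}_n$.
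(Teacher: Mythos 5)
Your construction is correct and is, at bottom, the same map as the paper's: the osculating frame in the natural coordinate $w$, with the $\mathcal{K}^{(1-n)/2}$-twist, is precisely the paper's $W_t(z)E^{-1}(z)$, where the paper corrects the $z$-coordinate Wronskian by the explicit matrix $E(z)=M_1(\omega_n^{\frac{1-n}{2n}})M_2(\omega_n^{\frac{1}{n}})$; and your positive-Hermitian model $Ep_t(z)=(\mathcal{F}_t(z)^{\ast}\mathcal{F}_t(z))^{-1}$ corresponds to the paper's coset-valued map $z\mapsto W_t(z)E^{-1}(z)\cdot\mathrm{SU}(n)$ under the usual identification $g\,\mathrm{SU}(n)\leftrightarrow gg^{\ast}$ (up to an inverse). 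Where you genuinely differ is in how equivariance is verified: the paper never changes coordinates, but instead proves the cocycle identity $E(\gamma(z))M(\gamma)=E(z)$ by an entry-by-entry induction resting on the recursive structure of $M_1,M_2$ recorded in Proposition \ref{induct}, whereas you obtain it by naturality --- $\gamma^{\ast}\omega_n=\omega_n$, so natural coordinates and $w$-jets are preserved and $\mathcal{F}_t(\gamma z)=\mathcal{F}_t(z)\circ\gamma^{\ast}$. Your route is cleaner and explains conceptually why the correction works; the paper's computational route has the side benefit of producing the closed formula for $E(z)$, which is used later (the $n=3$ example, and the analysis of $Ep_t$ near $Z_{\omega_n}$, where the blow-up of $E^{-1}$ at the zeros is read off directly). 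Two bookkeeping points to nail down, which you rightly flag but which are worth stating precisely: with the paper's convention \eqref{holonmy} the matrix by which $\gamma^{\ast}$ acts on the chosen basis is $\rho_t(\gamma)$ only up to transpose and inverse, so the equivariance may come out as $Ep_t(\gamma z)=\rho_t(\gamma)^{-1}Ep_t(z)\,(\rho_t(\gamma)^{\ast})^{-1}$ rather than the formula you wrote (this is a convention, not an error, but it must match the coset action used in Theorem \ref{thm2}); and the root-of-unity ambiguity acts on the jet components, i.e.\ by post-composition on the $\mathbb{C}^n$ side of $\mathcal{F}_t$, not by right multiplication on the solution side --- unitarity of the resulting diagonal matrix is what saves $\mathcal{F}_t^{\ast}\mathcal{F}_t$, exactly as you intend. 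Neither point is a gap; the proposal is sound.
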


Then the asymptotics of the solutions provide for a limiting map to the asymptotic cone of $\mbox{SL}_n(\mathbb{C})/\mbox{SU}(n)$. More precisely we show that

\begin{theorem}
\label{thm2}
The family of maps $Ep_t$ induce a map $$Ep: \tilde{X} \rightarrow Cone( SL_n(\mathbb{C}) / SU(n), \{ Id \}, \{ k^{\frac{1}{n}} \} ) $$ such that for each $z \notin Z_{\omega_n}$ there exist an open set $U$ containing z so that $Ep(U) \subset A$ an apartment in $Cone(SL_n(\mathbb{C}) / SU(n))$.
\end{theorem}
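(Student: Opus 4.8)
The plan is to read the asymptotic-cone limit of $Ep_t$ off the WKB asymptotics of the solutions of $D_t(y)=0$ (the same analysis that underlies Theorem \ref{thm1}), using the classical fact that $\mathrm{Cone}\bigl(\mathrm{SL}_n(\mathbb C)/\mathrm{SU}(n),\{Id\},\{k^{1/n}\}\bigr)$ is a Euclidean building in which the apartments are exactly the ultralimits of the maximal flats of the symmetric space. I identify $\mathrm{SL}_n(\mathbb C)/\mathrm{SU}(n)$ with the space $P(n)$ of positive Hermitian matrices of determinant one, and I use that, normalising the solution basis so that its Wronskian is $1$ (possible since the oper equations are trace-free, so the Wronskian is constant), the Proposition's map is, in suitable coordinates, $Ep_t(z)=W_t(z)^{*}W_t(z)\in P(n)$, where $W_t(z)$ is the matrix whose $(p,j)$ entry is the $(p-1)$-st derivative of the $j$-th basis solution at $z$. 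Write $F_0=\exp(\mathfrak a)$ for the standard flat through $Id$ (traceless real diagonal matrices) and $A=\lim_k\tfrac1{k^{1/n}}F_0$ for the corresponding apartment of the cone. I will produce $Ep$ by showing that for every $z$ the sequence $\bigl(Ep_k(z)\bigr)_k$ stays within distance $O(k^{1/n})$ of $Id$, and that on suitable charts it stays within distance $o(k^{1/n})$ of $F_0$.

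Fix $z_0\notin Z_{\omega_n}$, choose a simply connected $U\ni z_0$ disjoint from $Z_{\omega_n}$, small enough to lie inside a single Stokes region of the equations $D_t(y)=0$, and (shrinking $U$ if necessary) so that the functions $R_j(z):=\mathrm{Re}\!\int_{z_0}^z\lambda^{\,j-1}\omega_n^{1/n}$ are pairwise distinct on $U$; fix on $U$ a holomorphic $n$-th root $\omega_n^{1/n}$. The WKB analysis then gives, uniformly on compact subsets of $U$, a factorization $W_t(z)=D^{(1)}_t(z)\,\Lambda\,D^{(2)}(z)\,\bigl(I_n+O(t^{-1/n})\bigr)\,E_t(z,\omega_n)$, where $E_t(z,\omega_n)=\exp\!\bigl(t^{1/n}\!\int_{z_0}^z A(\omega_n)\bigr)$ has diagonal moduli $e^{t^{1/n}R_j(z)}$, where $\Lambda=(\lambda^{(j-1)(p-1)})_{p,j}$ is the fixed Vandermonde of $n$-th roots of unity (unitary up to scale, $V:=\Lambda/\sqrt n$), and where $D^{(1)}_t(z)=\mathrm{diag}\bigl((t^{1/n}\omega_n^{1/n}(z))^{p-1}\bigr)_p$ and $D^{(2)}(z)=\mathrm{diag}(a_j(z))_j$ carry the WKB amplitudes. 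The crucial structural point is that $D^{(1)}_t$ depends on $t$ only \emph{polynomially} while $E_t$ depends on it exponentially in $t^{1/n}$: writing $\Delta_t(z)=D^{(1)}_t(z)^{*}D^{(1)}_t(z)$ and $M_t(z)=(\Lambda D^{(2)}(z))^{*}\Delta_t(z)\,\Lambda D^{(2)}(z)$, one computes $Ep_t(z)=E_t^{*}M_t E_t$, and conjugating by the real diagonal part of $E_t$ (which preserves $F_0$) gives $d_{P(n)}(Ep_t(z),F_0)=d_{P(n)}(\Delta_t(z),\,V F_0 V^{*})\le d_{P(n)}(\Delta_t(z),Id)=O(\log t)$, uniformly on compacts. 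A parallel computation of the singular values of $W_t(z)$ gives $\log\sigma_k\bigl(W_t(z)\bigr)=t^{1/n}R_k(z)+O(\log t)$ (after relabelling so that $R_1>\cdots>R_n$ on $U$) and shows that the right singular frame of $W_t(z)$ converges, exponentially fast, to the standard frame in solution space, so $Ep_t(z)$ lies within $O(\log t)$ of $\mathrm{diag}\bigl(\sigma_k(W_t(z))^2\bigr)\in F_0$. Hence $d_{P(n)}(Ep_t(z),Id)=2t^{1/n}\|R(z)\|+O(\log t)$, the limit $Ep(z):=\lim_k Ep_k(z)$ exists as a point of the cone, it is the point of $A$ with $\mathfrak a$-coordinates $2R(z)$, and therefore $Ep(U)\subset A$; letting $z_0$ vary gives the local apartment statement for every $z\notin Z_{\omega_n}$ at which the $R_j$ are distinct, and the remaining $z$ are handled by the continuity of $Ep$ established next.

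It remains to assemble these cone-points into a single continuous map $Ep:\tilde X\to\mathrm{Cone}(\cdots)$. On $\tilde X\setminus Z_{\omega_n}$ the local formula $z\mapsto 2R(z)=2\,\mathrm{Re}\!\int_{z_0}^z A(\omega_n)$ is real-analytic, and on overlaps of two such charts — possibly in different Stokes regions — the two WKB frames differ by a fixed invertible matrix, a Stokes matrix (or, near a zero, the branching datum of the equation), whose conjugation perturbs the point by $O(1)$ and is therefore invisible in the cone, so the locally defined limits agree; here one also invokes Theorem \ref{thm1} to see that these identifications are $\rho_t$-equivariant in the limit, so that $Ep$ is well defined on all of $\tilde X\setminus Z_{\omega_n}$. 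Finally $Ep$ extends continuously over each zero $p$ of $\omega_n$, sending $p$ to the base point of the cone, since the periods $\int_{z_0}^z\omega_n^{1/n}$, and hence $R(z)$, tend to $0$ as $z\to p$; the (cyclic) ambiguity of $R$ on loops around $p$ is absorbed by the branching of the building at $p$ — this is the sub-building produced from the local data of $\omega_n$.

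The step I expect to be the main obstacle is the one carried out in the second paragraph: making the WKB factorization and its error terms precise and uniform on compact subsets of $U$ inside a single Stokes region, and — the conceptual heart — verifying that the WKB prefactor leaves no trace in the asymptotic cone, i.e. that the polynomial-in-$t$ factor $D^{(1)}_t$ and the fixed factor $\Lambda D^{(2)}$ displace $Ep_t(z)$ from the standard flat by only $o(t^{1/n})$, so that the cone limit is governed purely by the period data $2\,\mathrm{Re}\!\int A(\omega_n)$. Everything downstream of that is bookkeeping with distances in $P(n)$, stability of the estimates under the bounded Stokes matrices supplied by Theorem \ref{thm1}, and the building structure of the cone.
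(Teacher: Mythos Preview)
Your overall strategy---WKB asymptotics force $Ep_t(z)$ to lie within $o(t^{1/n})$ of a flat, hence the cone limit lands in an apartment---is the same as the paper's, and your singular-value computation is a perfectly good implementation of the local step. Two remarks on the organisation, and one genuine error.

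\textbf{Organisation.} First, the paper's $Ep_t$ is not $W_t^{*}W_t$ but $W_t(z)E^{-1}(z)$ for a specific $t$-independent upper-triangular $E(z)$; since $E(z)$ is bounded on compacta of $\tilde X\setminus Z_{\omega_n}$ this is invisible in the cone, so your simplification is harmless there, but you should say so. Second, your phrase ``letting $z_0$ vary'' conflates the global basepoint defining the cone with a local reference point for the WKB normalisation. The cone basepoint is fixed; for a general $z$ the apartment containing $Ep(U)$ is not the standard one through $Id$ but a left translate by the accumulated transition matrix $C(k)$ along a path from the basepoint to $z$. The paper handles this cleanly by noting that $Ep_k(z)=C(k)W_y(z,k)E^{-1}(z)$ with $C(k)\in\mathrm{SL}_n(\mathbb C)$ acting by isometry, so distances can be computed in the local frame; your claim that Stokes matrices ``perturb by $O(1)$'' is not the right mechanism---the full transition $C(k)$ grows like $e^{O(k^{1/n})}$, and what saves you is the isometry, not boundedness.

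\textbf{The error.} Your extension over the zeros is wrong. You write that $\int_{z_0}^{z}\omega_n^{1/n}\to 0$ as $z\to p\in Z_{\omega_n}$, hence $p$ maps to the basepoint of the cone. But $\int_{z_0}^{z}\omega_n^{1/n}$ converges to the finite nonzero value $\int_{z_0}^{p}\omega_n^{1/n}$ (the integrand vanishes at $p$, the integral does not), and in general $Ep(p)$ is far from the basepoint---it depends on all the Stokes data accumulated along the way. The correct argument, which the paper gives, is: near a simple zero the turning-point asymptotics (Lemma~\ref{lem3}) show that on each Stokes sector of a punctured neighbourhood $V\setminus\{p\}$ one has $d(Ep(x),Ep(y))=c_n\bigl|\int_x^y\omega_n^{1/n}\bigr|$, so $Ep$ is Lipschitz for the $|\omega_n|^{1/n}$-metric on $V\setminus\{p\}$; since the cone is complete, $Ep$ extends continuously across $p$. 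The ``branching of the building'' you invoke is real, but it does not substitute for this Lipschitz/completeness step.
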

The notation used in the theorem above will be explained in section \ref{bg}. These results work towards a generalization of Dumas \cite{Dumas} in the case of cyclic Opers. Similar results in different context can be seen in  \cite{collier2014asymptotics},\cite{dumas2015polynomial} \cite{katzarkov2015constructing}, \cite{KAPS}, \cite{loftin2007flat},\cite{wolf1991high}.
\subsection*{Outline of methods}
We prove Theorem \ref{thm1} using two lemmas. One lemma computes the local asymptotics of the solution to the differential equation at a point that is not a zero of $\omega_n$. The second lemma computes the local asymptotics of the differential equation in a neighborhood of a zero of $\omega_n$. Then given a curve $\gamma$ on $\tilde{X}$, we glue together the local asymptotics to obtain asymptotics of the holonomy $\rho_t(\gamma)$. 

To prove Theorem \ref{thm2} we construct the $\rho_t$-equivariant map $Ep_t$ as the product of the matrix Wronskian  for the differential equation and a correction matrix that depends only on $\omega_n$. Finally applying the two lemmas to the matrix Wronskian, we see that near a point $z \notin Z_{\omega_n}$, the zero set of $\omega_n$, the image of the map $Ep_t$ approaches a flat SL$_n(\mathbb{C})/\mbox{SU}(n)$ and at a zero of the $n$ differential $\omega_n$ the image of $Ep_t$ approaches a collection of flats depending on Stokes data. 

\subsection*{Outline}
Section \ref{bg} will give background on SL$_n$-opers, asymptotic expansions, the Stokes phenomenon and the asymptotic cone of a metric space. In section \ref{lems} we will prove the two lemmas on the local asymptotics of corresponding differential equations, one giving the asymptotics of $D_t(y) = 0$ away from a zero of $\omega_n$ and the second giving the asymptotics of $D_t(y) = 0$ at a zero of $\omega_n$. Section \ref{PROOF} will contain the proof of the Theorem 1.1. Finally in section \ref{building} we will describe how to define the equivariant map, $Ep_t$, using the solutions to the ODE, and then verify Theorem  \ref{thm2}.

\subsection*{Acknowledgements}
The author would like to thank his advisor Michael Wolf for many helpful discussions and encouragement. The author also acknowledges support from U.S. National Science Foundation grants DMS 1107452, 1107263, 1107367 "RNMS: Geometric Structures and Representation Varieties" (the GEAR Network).
\section{Background}

\label{bg}
\subsection{Opers}
This section contains a brief introduction to SL$_n(\mathbb{C})$ opers on a Riemann surface. Here we define opers, explain the relationship to ODE and describe how to compute the corresponding holonomy. Let X be a Riemann surface structure denote the holomorphic cotangent bundle as $\mathcal{K}$ and the space of holomorphic $n$-th differentials by H$^0(X,\mathcal{K}^n)$. The Hitchin base of X is defined to be $$\mathcal{H}_n = \bigoplus_{i=2}^{n} \mbox{H}^0(X,\mathcal{K}^i).$$
A SL$_n$-oper on a Riemann surface X is defined as follows. 

\begin{definition}
An $SL_n$-oper $\mathcal{D} = \mathcal{D}(V,\nabla, \mathcal{F})$ is a holomorphic vector bundle $V$, holomorphic connection $\nabla$ inducing the trivial connection on det V, and a holomorphic filtration $\mathcal{F}$ $$0 = V_0 \subset V_1 ...\subset V_n = V$$ such that \begin{itemize}
\item $\nabla V_i \subset V_{i+1} \otimes \mathcal{K}$
\item The induced map $\nabla: {V_i}/{V_{i-1}} \rightarrow \frac{V_{i+1}}{V_i} \otimes \mathcal{K}$ is an isomorphism. \end{itemize}
\end{definition}

This is equivalent to a local system $\mathbf{V}$ (see \cite{Wentworth} for details) that is \textit{realized in} $\mathcal{K}^\frac{1-n}{2}$. That is, we have a short exact sequence of sheaves \begin{equation*} 0 \rightarrow \mathbf{V} \rightarrow \mathcal{K}^\frac{1-n}{2} \xrightarrow{D} \mathcal{K}^\frac{1+n}{2} \rightarrow 0, \end{equation*} where $D$ is a differential operator between the line bundles $\mathcal{K}^\frac{1-n}{2}$ and $\mathcal{K}^\frac{1+n}{2}$. Lifting to the universal cover $\tilde{X}$, the operator $D$ defines a differential equation D(y) = 0 on $\tilde{X}$. There is a one-to-one correspondence between such differential equations and the Hitchin base $\mathcal{H}_n$ (see \cite{Walgebras}). That is, given $(q_2,q_3,\cdots,q_n) \in \mathcal{H}_n$ we obtain an ODE on $\tilde{X}$ 

\begin{equation} \label{eq0} D(y) = y^{(n)} + Q_2y^{(n-2)}+\cdots + Q_n y = 0.
\end{equation} 

Let $Op(n)$ denote the space of gauge equivalence classes of SL$_n$-opers on $X$. The above correspondence shows that $ \mathcal{H}_n$ parametrizes $Op(n)$. In \cite{baraglia2010cyclic} Baraglia calls a Higgs bundle \textit{cyclic} if, in the projection to the Hitchin base, all but the highest differential vanishes. Following this we will call a Oper \textit{cyclic} if, in the above parametrization, all but the highest differential vanishes.

An example of this parametrization can be seen for $n = 2$ where $Op(2)$ corresponds to the space of $\mathbb{CP}^1$ structures on X \cite{Dumas}. We now provide some details in this case.
\begin{definition} A $\mathbb{CP}^1$ structure on X is defined, up to Mobius transformations, as a pair $(f, \rho)$ where $$\rho: \pi_1(X) \rightarrow \mbox{PSL}_2(\mathbb{C})$$ and $$f: \tilde{X} \rightarrow \mathbb{CP}^1$$ a locally injective equivariant holomorphic function.
\end{definition}  From this data we can consider the differential equation $Dy = y'' + S(f)y = 0$ on $\tilde{X}$, where $$S(f) = \left(\frac{f''}{f'}\right)' - \frac{1}{2}\left(\frac{f''}{f'}\right)^2$$  is the Schwarzian derivative. Now, in the more contemporary language of maps between bundles, it is easy to check that $Dy$ defines a operator between $\mathcal{K}^{-\frac{1}{2}}$ and $\mathcal{K}^{\frac{3}{2}}$ and that $\rho$ is the monodromy of the differential equation. Conversely $(f,\rho)$ is determined by $S(f)$: up to Mobius transformations $f$ is the ratio of two linearly independent solutions of $Dy=0$. This shows that the moduli space of projective structures on X is parametrized by the space of Schwarzian derivatives; this space is known to be an affine space modelled on the space of quadratic differentials, which for $n=2$ is $\mathcal{H}_2$.

\subsubsection*{Associated holonomy}
Fix $\vec{q} = (q_2,\cdots,q_n) \in \mathcal{H}_n$ and let $\mathcal{D} = \mathcal{D}(\vec{q})$ be the corresponding oper.  To each oper, we have a flat connection $\nabla$ from which there is a holonomy representation $$\rho: \pi_1(X) \rightarrow \mbox{SL}_n(\mathbb{C}).$$ In this section we describe $\rho$ in terms of the solutions to equation \eqref{eq0}. The equation \eqref{eq0} is equivalent to a matrix equation

 \begin{equation}
 \label{meq}
 Y' = \left(\begin{array}{cccc} 0 & 1 & & \\ & 0 & 1 & \\ & & \ddots & 1 \\ -Q_n & -Q_{n-1} & & 0 \end{array} \right)Y = A(z)Y. \end{equation}
 
More precisely, $Y(z)$ is a matrix solution to \eqref{meq} with det$(Y(z)) \neq 0$ if and only if $Y(z) = W(y_1, \cdots , y_n)^T$ where $W(y_1, \cdots , y_n)$ is the matrix Wronskian of $y_1(z),..., y_n(z)$ a basis of solutions to \eqref{eq0}. 
\begin{definition}
Let $n$ linearly independent functions, $f_1(z),\cdots,f_n(z)$ be given, the matrix Wronskian is defined to be
$$W(f_1(z),\cdots,f_n(z)) = \left(\begin{array}{cccc} f_1(z) & f_1'(z) & \cdots & f_1^{(n-1)}(z) \\ f_2(z) & f_2'(z) & \cdots & f_2^{(n-1)}(z) \\ . & . & . & \\ f_n(z) & f_n'(z) & \cdots & f_n^{(n-1)}(z) \end{array} \right).$$
\end{definition}
We note here two useful facts about the Wronskian. Given functions $f_1,f_2, \cdots,f_n,g,h$, since $(fg)^j$  can be written as a linear combination of $f^i$ with coefficients in terms of $g$ and $ (f\circ h)^j$ can be written as linear combination of $(f^i \circ h)$ with coefficients in terms of h, there exists unique matrices $M_1(g), M_2(h)$ such that 
\begin{itemize}
\item $W(gf_1,gf_2,\cdots,gf_n) = W(f_1,\cdots,f_n)M_1(g)$
\item $W(f_1 \circ h, \cdots, f_n \circ h) = W(f_1,\cdots,f_n)(h) M_2(h')$
\end{itemize}
\emph{Example} In the case $n = 3$ we would have
$$M_1(g) = \left( \begin{array}{ccc} g & g' & g'' \\ 0 & g & 2g' \\ 0 & 0 & g \end{array} \right), M_2(h') = \left( \begin{array}{ccc} 1 & 0 & 0 \\ 0 &h' & h'' \\ 0 & 0 & h'^2 \end{array} \right).$$
We prove here a fact that will be useful in section \ref{building}. 
\begin{prop} \label{induct}
The matrix $M = M_2(h')M_1(g) = (M_{ij})$ satisfies the following properties 
\begin{enumerate}
    \item  $M_{00}(z) = g$
    \item  for $i >j \quad M_{ij} = 0$

    \item for $ i \leq j \quad M_{ij} = M_{i,j-1}'(z)+  M_{i-1,j-1}(z)h'	.$
\end{enumerate}
\end{prop}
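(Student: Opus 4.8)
\emph{Proof strategy.} The plan is to deduce all three properties at once from the observation that $M$ is, by construction, the transition matrix carrying the Wronskian of the functions $\phi_i := g\cdot(f_i\circ h)$ to the Wronskian of the $f_i$ pulled back along $h$. Indeed, applying first the identity for $M_2$ and then the one for $M_1$,
\begin{equation*}
W(\phi_1,\dots,\phi_n)(z)\;=\;W(f_1,\dots,f_n)\bigl(h(z)\bigr)\,M_2\bigl(h'(z)\bigr)M_1\bigl(g(z)\bigr)\;=\;W(f_1,\dots,f_n)\bigl(h(z)\bigr)\,M(z),
\end{equation*}
and this holds for \emph{every} linearly independent tuple $f_1,\dots,f_n$, since $M_1$ and $M_2$ were singled out as the unique matrices with their defining property.

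Properties (1) and (2) then drop out of the Leibniz and Faà di Bruno formulas for the building blocks: $(gf)^{(j)}$ involves only $f^{(i)}$ with $i\le j$, and $(f\circ h)^{(j)}$ involves only $f^{(i)}\circ h$ with $i\le j$, so $M_1(g)$ and $M_2(h')$ are upper triangular, and hence so is their product, which is (2); and since $(gf)^{(0)}=gf$ and $(f\circ h)^{(0)}=f\circ h$ we get $(M_1)_{00}=g$, $(M_2)_{00}=1$, so $M_{00}=g$, which is (1).

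For the recurrence (3) I would fix one convenient basis, e.g.\ $f_i(z)=z^{\,i-1}$, whose Wronskian matrix is triangular with nonvanishing diagonal and hence invertible at every point. Then the columns $v_k(z):=\bigl(f_1^{(k)}(h(z)),\dots,f_n^{(k)}(h(z))\bigr)^{T}$, $k=0,\dots,n-1$, form a basis of $\mathbb{C}^n$ for each $z$. Reading off column $j$ of the displayed identity gives $\bigl(\phi_1^{(j)},\dots,\phi_n^{(j)}\bigr)^{T}=\sum_k M_{kj}(z)\,v_k(z)$. Now differentiate in $z$: the left-hand side becomes column $j{+}1$ of $W(\phi_1,\dots,\phi_n)$, while on the right one uses $v_k'(z)=h'(z)\,v_{k+1}(z)$, the spurious term $M_{n-1,j}v_n$ dropping out for $j\le n-2$ by upper triangularity. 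Matching coefficients against the basis $\{v_k\}$ yields $M_{k,j+1}=M_{kj}'+h'\,M_{k-1,j}$, which is exactly (3) after the index shift $j\mapsto j-1$; as usual one reads $M_{ij}=0$ whenever an index leaves $\{0,\dots,n-1\}$.

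The computation is short, so the main thing to get right is the boundary bookkeeping — the convention $M_{-1,j}=0$ and the vanishing of the $v_n$ term — together with the small point that one must choose the auxiliary functions $f_i$ so that the $v_k$ really are independent before equating coefficients. A more pedestrian alternative is to expand $M_{ij}=\sum_k \bigl(M_2(h')\bigr)_{ik}\binom{j}{k}g^{(j-k)}$ and derive (3) from the one-step recurrences obeyed separately by $M_1$ and $M_2$; but the transition-matrix argument is cleaner and makes transparent why (3) takes the form it does.
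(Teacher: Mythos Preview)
Your argument is correct and is essentially the mirror image of the paper's. Both proofs rest on the same identity
\[
W\bigl(g(f_1\circ h),\dots,g(f_n\circ h)\bigr)=W(f_1,\dots,f_n)(h)\,M,
\]
and the same column-by-column differentiation that produces the relation $M_{k,j+1}=M_{kj}'+h'M_{k-1,j}$. The paper runs the logic in the opposite direction: it \emph{defines} a matrix $\hat M$ by properties (1)--(3), verifies by induction on $j$ that $\hat M$ satisfies the Wronskian identity, and then invokes uniqueness of $M$ to conclude $M=\hat M$. You instead start from the identity (which $M$ satisfies by definition) and differentiate to read off the recurrence. The trade-off is that the paper never needs to match coefficients and so never has to worry about the linear independence of the column vectors $v_k$, whereas you must fix a concrete basis (your $f_i=z^{i-1}$) to make that step legitimate; on the other hand, your direction avoids the small appeal to uniqueness. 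Either way the computational content is identical.
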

\begin{proof}
The matrix $M$ is the unique matrix which satisfies \begin{equation} \label{eqWrons}
W(g(f_1 \circ h), \cdots,g(f_n \circ h)) = W(f_1,\cdots,f_n)(h)M.\end{equation}
Let $\hat{M}$ be the matrix defined by properties (1) - (3). We will show that $\hat{M}$ satisfies \eqref{eqWrons} and hence $M$ satisfies properties (1) - (3). We need to show that 
\begin{align*}
W(g(f_1 \circ h), \cdots,g(f_n \circ h))_{ij} &= (W(f_1,\cdots,f_n)(h)\hat{M})_{ij} \\
 (g(f_i \circ h))^{(j)} &= \sum_k (f_i^{k} \circ h)\hat{M}_{kj}.
\end{align*}
We will prove this by induction on $j$. For the base case $j = 0$ we have $$g(f_i \circ h) =  \sum_k (f_i^{k} \circ h)\hat{M}_{kj}$$ since $\hat{M}_{00} = g$ and $\hat{M}_{i0} = 0$ for $i \neq 0$.
Suppose that  $(g(f_i \circ h))^{(j)} = \sum_k (f_i^{k} \circ h)\hat{M}_{kj}$ for some $j < n$
Then
\begin{align*}
(g(f_i \circ h))^{(j+1)} &= ((g(f_i \circ h))^{(j)})' \\
&= (\sum_k (f_i^{k} \circ h)\hat{M}_{kj})' \\
&= \sum_k (f_i^{k+1} \circ h)h'\hat{M}_{kj} +\sum_k (f_i^{k} \circ h)\hat{M}'_{kj} \\ 
&= \sum_k (f_i^{k+1} \circ h)\left[ \hat{M}_{k+1,j+1} - \hat{M}'_{k+1.j} \right]+ \sum_k (f_i^{k} \circ h)\hat{M}'_{kj} \\
&= \sum_k (f_i^{k+1} \circ h)\hat{M}_{k+1,j+1}
\end{align*}
which is what we wanted to show. By induction we have $$W(g(f_1 \circ h), \cdots,g(f_n \circ h)) = W(f_1,\cdots,f_n)(h)\hat{M}.$$
\end{proof}
To describe the holonomy $\rho$ in terms of solutions to \eqref{eq0}, let a homotopy class $[\gamma] \in \pi_1(X)$ be given, we may associate to it a deck transformation $$\gamma : \tilde{X} \rightarrow \tilde{X}.$$ One can show that given a basis of solutions $y_1(z), \cdots, y_n(z)$to \eqref{eq0}, $$y_1(\gamma(z))\gamma'^{\frac{1-n}{2}},\cdots, y_n(\gamma(z))\gamma'^{\frac{1-n}{2}}$$ is also a basis of solutions to \eqref{eq0}. We have that both  $W(y_1(\gamma(z))\gamma'^{\frac{1-n}{2}},\cdots, y_n(\gamma(z))\gamma'^{\frac{1-n}{2}})^T$ and $W(y_1,\cdots,y_n)^T$ are solutions to \eqref{meq}. This implies that there is a change of basis matrix, $\rho(\gamma)$, satisfying 

\begin{equation} \label{holonmy} \rho([\gamma]) = (W(y_1,\cdots,y_n)^T)^{-1}W(y_1(\gamma(z))\gamma'^{\frac{1-n}{2}},..., y_n(\gamma(z))\gamma'^{\frac{1-n}{2}})^T. \end{equation}

This defines the representation H$(\mathcal{D}) = \rho: \pi_1(X) \rightarrow \mbox{SL}_n(\mathbb{C})$ of the oper. This defines a map $$\mbox{H}: Op(n) \rightarrow \mathcal{M}_B^{(n)}$$ of the space of opers into the Betti moduli space. One can show that this is in fact a proper embedding (see \cite{Wentworth}). In the case $n = 2$ boundary points of $\mathcal{M}_B^{(n)}$ are represented by $\pi_1$ actions on $\mathbb{R}$-trees this is known as the Morgan-Shalen compactification (see \cite{MS1}). Dumas studied the asymptotics of the map H in \cite{Dumas}. That is Dumas showed
\begin{theorem}[Dumas \cite{Dumas}] \label{Dumas} If $ \phi_n \in \mathcal{H}_2$ is a sequence of holomorphic quadratic differentials with projective limit $\phi \in \mathcal{H}_2$. then any accumulation point of $H(\phi_n)$ in the Morgan-Shalen boundary is represented by an $\mathbb{R}$-tree T that admits an equivariant, surjective straight map $T_{\phi} \rightarrow T$. 
\end{theorem}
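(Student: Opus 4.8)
The plan is to reduce to the Schwarzian ODE for $\mathbb{CP}^1$-structures and to run, in the case $n=2$, the same local analysis that the outline proposes for Theorem \ref{thm1}. Take the uniformizing projective structure as a basepoint, so that each $\phi_n$ is a Schwarzian and each developing map is a ratio $f_n = u_1/u_2$ of a basis of solutions of $u'' + \tfrac12\phi_n u = 0$, with $\rho_n = H(\phi_n)$ the monodromy. Passing to the subsequence along which $H(\phi_n)$ converges to the prescribed boundary point, write $\phi_n = \lambda_n^2\,\psi_n$ with $\lambda_n \to \infty$ and $\psi_n \to \phi$; it is enough to treat $\phi$ with simple zeros, the general case following from the same local models with higher order turning points. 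First I would establish the two local facts: on any simply connected region of $\tilde X\setminus Z_\phi$ a basis of solutions of $u'' + \tfrac12\lambda_n^2\psi_n u = 0$ is asymptotic to $\psi_n^{-1/4}\exp\!\big(\pm\tfrac{i}{\sqrt2}\lambda_n\!\int\sqrt{\psi_n}\,\big)$, while across a simple zero these two WKB branches are matched by a connection (Stokes) matrix that stays bounded as $n\to\infty$. These are the $n=2$ instances of the two lemmas of Section \ref{lems}.

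Next I would read off the holonomy asymptotics. For $[\gamma]\in\pi_1(X)$, subdividing a representative of $\gamma$ and splicing the local models expresses $\rho_n(\gamma)$, up to a bounded error, as a product of diagonal factors $\exp\!\big(\pm\tfrac{i}{\sqrt2}\lambda_n\!\int_{\gamma_j}\!\sqrt{\psi_n}\,\big)$ and bounded Stokes matrices, so that, using $\psi_n\to\phi$,
\[
\frac{1}{\lambda_n}\,\log\!\big(2 + |\operatorname{tr}\rho_n(\gamma)|\big)\ \longrightarrow\ \ell(\gamma)\ :=\ \tfrac{1}{\sqrt2}\,\Big|\operatorname{Im}\!\oint_\gamma\!\sqrt\phi\,\Big|\ =\ \tfrac{1}{\sqrt2}\,i\big(\gamma,\mathcal F(\phi)\big),
\]
the transverse measure that $[\gamma]$ acquires from the horizontal measured foliation $\mathcal F(\phi)$ of $\phi$; here one works on the orientation double cover, where $\sqrt\phi$ is single valued, and the absolute value absorbs the residual sign. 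Hence $\lambda_n$ is an admissible Morgan--Shalen rescaling and the projectivized length functions $[\ell_{\rho_n}]$ converge to $[\ell]$. Because $\ell$ is a fixed multiple of the intersection-number length function of a measured foliation, it is the length function of the dual $\mathbb R$-tree $T_\phi$; and since length functions pin down a minimal tree only up to the collapses permitted by the combinatorics of $\mathcal F(\phi)$, the limiting tree $T$ is an equivariant quotient of $T_\phi$.

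To upgrade this from a statement about length functions to the asserted geometric statement, I would build the map $T_\phi \to T$ directly from the developing data. Rescaling the symmetric-space metric on $\mathbb H^3 = \mathrm{SL}_2(\mathbb C)/\mathrm{SU}(2)$ by $1/\lambda_n$, the WKB description shows that the $\rho_n$-equivariant maps $\tilde X \to \mathbb H^3$ built from the matrix Wronskian (the $n=2$ analogue of the map $Ep_t$) converge, in the (ultra)limit, to a map $\tilde X \to T$ that is constant along the horizontal leaves of $\phi$ and, transverse to them, has local stretch factor $\tfrac{1}{\sqrt2}$. Consequently this limiting map factors as $\tilde X \to T_\phi \to T$ through the leaf-space projection $\tilde X \to T_\phi$; the induced map $T_\phi \to T$ is $\pi_1$-equivariant by construction, it is straight because the local estimates forbid the transverse direction from backtracking, and it is surjective because, after rescaling, the equivariant maps $\tilde X \to \mathbb H^3$ sweep out all of $T$ in the limit (every point of $T$ is hit along a suitable transverse arc). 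Running this argument along each subsequence realizing a prescribed accumulation point $T$ gives the theorem.

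I expect the main obstacle to be the uniformity of the WKB analysis across the degeneration: controlling the connection matrices at the zeros of $\phi$ uniformly in $n$, and handling the ways the zero set of $\psi_n$ can degenerate --- colliding zeros, or a higher order zero in the limit --- so that the local-to-global splicing is valid for every $[\gamma]$ simultaneously. A secondary difficulty is the passage from the asymptotics of matrix entries to honestly geometric conclusions: that the rescaled limit is an $\mathbb R$-tree and not merely a length function, and that the induced map $T_\phi \to T$ is straight and onto rather than only length non-increasing. That one obtains just a quotient map $T_\phi\to T$, and not an isometry, is forced by the cancellation already present in $|\operatorname{tr}\rho_n(\gamma)|$ when $\mathcal F(\phi)$ has closed leaves or saddle connections, and pinning down precisely when that cancellation occurs is the delicate combinatorial point.
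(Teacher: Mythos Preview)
This theorem is not proved in the paper; it is quoted from Dumas's work as background, and the paper gives only a one-sentence description of the method: Dumas used the Epstein envelope-of-horospheres construction to produce $\rho_n$-equivariant maps $Ep_n:\tilde X\setminus Z_{\phi_n}\to\mathbb H^3$ and then showed that their limits in $\mathrm{Cone}(\mathbb H^3)$ factor through the dual tree $T_\phi$. There is no proof in the paper to compare against.

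That said, your proposal is not Dumas's argument. You are running the machinery of \emph{this} paper in the special case $n=2$: WKB asymptotics for the Schwarzian equation, Stokes/connection matrices at the zeros, the Wronskian-based map $Ep_t$, and passage to the asymptotic cone. Dumas's equivariant map comes from Epstein's construction (osculating horospheres to the developing map), which is analytically quite different from the Wronskian map and does not require the Stokes analysis at turning points; in particular, the delicate uniformity issues you flag (colliding zeros of $\psi_n$, uniform control of connection matrices) are handled in \cite{Dumas} by geometric estimates on the Epstein surface rather than by ODE asymptotics. Your route is closer in spirit to what the present paper does for general $n$, and it is a reasonable alternative strategy for $n=2$, but it is not what the cited theorem's proof actually does, and the paper itself makes no claim to reprove it.
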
 To prove Theorem \ref{Dumas}, Dumas, using a construction from \cite{Epstein}, defined an equivariant map Ep$_n: \tilde{X} / Z_{\phi_n} \rightarrow \mathbb{H}^3$ and showed that the limit of these maps to Cone$(\mathbb{H}^3)$ factors through the $\mathbb{R}$-tree $T_{\phi}$ associated to $\phi$.

\subsection{Asymptotic Expansion}

An asymptotic expansion describes the asymptotic behavior of a function. Here we define what it means for a function $f(z)$ to have an asymptotic expansion and list some useful properties of asymptotic power series. 

\begin{definition}
A function $f(z)$ is said to have asymptotic expansion $\sum_{r=0}^{\infty} a_r(z-a)^r$ in a set S at $a \in$ S if for each $m \in \mathbb{N}$ $$\lim_{z \to a} (z-a)^{-m}\left[ f(z) - \sum_{r=0}^{m} a_r (z-a)^r \right] = 0$$ We denote this relation by $f(z) \sim \Sigma_{r=0}^{\infty} a_r (z-a)^r$.
\end{definition}

If $f(z)$ is given by a convergent series $$f(z) = \sum_{i=0}^{\infty} a_r(z-a)^r$$ then it is easy to see that $f(z) \sim \sum_{i=0}^{\infty} a_r(z-a)^r$. If the asymptotic series does not converge there still exists function with that asymptotic series. In fact, given any series $\sum_{i=0}^{\infty} a_r (z-a)^r$ there exists a function $f(z)$ so that $f(z) \sim \sum_{i=0}^{\infty} a_r (z-a)^r$. For a given function the asymptotic expansion is uniquely determined by $$a_m = \lim_{z \to a} (z-a)^{-m}\left[ f(z) - \sum_{r=0}^{m-1} a_r (z-a)^r \right].$$ The converse is not true, an asymptotic expansion does not uniquely determine a function. For example both $f(z) = 0$, and $g(z) = e^{-\frac{1}{z}}$ have asymptotic expansion at $0$ given by $\sum_{r=0}^{\infty} 0 z^r.$

Asymptotic expansion satisfy some useful algebraic properties. Let $f(z), g(z)$ have asymptotic expansions $\sum_{r=0}^{\infty} a_r(z-a)^r$, $\sum_{r=0}^{\infty} b_r(z-a)^r$ respectfully then we have the following $$\alpha f(z) + \beta g(z) \sim \sum_{r=0}^{\infty} (\alpha a_r + \beta b_r )(z-a)^r$$ $$ f(z)g(z) \sim \sum_{r=0}^{\infty} c_r (z-a)^r $$ where $c_r = \sum_{j=0}^{r} a_j b_{r-j}$.

\subsection{Stokes Phenomenon}
Equation \eqref{holonmy} shows that the asymptotics of the representation are determined by the asymptotics of the solutions to \eqref{eq0}. This is where the Stokes phenomenon comes into play. Solutions to certain differential equations can by approximated by multivalued exponential functions that are defined explicitly in terms of the coefficients of the differential equation. However, because the solutions to the differential equations are single-valued, the approximations are valid only in certain sectors. The Stokes phenomenon describe these sectors as being bounded by \emph{Stokes lines} and relate the asymptotics between them with \emph{Stokes matrices}. (see \cite{Wasow} for details)

\begin{theorem}[Birkhoff \cite{birkhoff1909singular}]
\label{Wasow}
Consider a linear differential equation of the form 

\begin{equation} \label{Stokes1} z^{-q}Y'(z) = A(z)Y(z)
\end{equation}
where $Y(z), A(z)$ are $n$ x $n$ matrices with det$(Y(z)) \neq 0$ and $A(z)$ has asymptotic expansion $$A(z) \sim \sum_{r = 0}^{\infty} A_r z^{-r}, \quad z \to \infty, \quad z \in S$$ with $A_0$ having distinct eigenvalues $\lambda_1, \cdots, \lambda_n.$ Then for any open sector $S$ based at the origin and angle $\theta < \frac{\pi}{q+1}$ there exists a solution $Y(z)$ to \eqref{Stokes1} so that
\begin{equation} \label{asymp} Y(z) = \hat{Y}(z)z^{D}e^{Q(z)} \end{equation}
where $D$ is a constant diagonal matrix, $Q(z)$ is a diagonal polynomial with leading term $\frac{x^{q+1}\lambda_i}{q+1}$ and the matrix $\hat{Y}(z)$ has asymptotic expansion in $S$ $$\hat{Y}(z) \sim \sum_{r=0}^{\infty} \hat{Y}_r z^r, \quad z \to \infty, \quad \det \hat{Y}_0 \neq 0.$$
\end{theorem}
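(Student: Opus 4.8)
\emph{Proof proposal.} This is a classical statement about irregular singular points, and I would prove it by the standard two-stage argument: first build a \emph{formal} solution of the prescribed shape, then realize it analytically on each sufficiently narrow sector. I take the substantive case $q\in\mathbb{Z}_{\ge 0}$ (if the Poincar\'e rank is such that $z=\infty$ is a regular singular point, the conclusion is Frobenius/Fuchs theory).

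\emph{Step 1: reduction and the formal solution.} Since $A_0$ has distinct eigenvalues, conjugating \eqref{Stokes1} by a constant matrix $P$ with $P^{-1}A_0P=\Lambda_0:=\operatorname{diag}(\lambda_1,\dots,\lambda_n)$ reduces to the case $A_0=\Lambda_0$. I then look for a formal gauge transformation $Y=\hat F(z)W$ with $\hat F(z)=I+\sum_{r\ge 1}F_rz^{-r}$ carrying \eqref{Stokes1} to a diagonal system $W'=\bigl(z^q\Lambda(z)+z^{-1}D\bigr)W$, where $\Lambda(z)=\Lambda_0+\Lambda_1z^{-1}+\cdots+\Lambda_qz^{-q}$ and $D$ are diagonal. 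Substituting $Y=\hat FW$ into $Y'=z^qA(z)Y$ and matching powers of $z^{-1}$ yields, at each order, an equation of the form $[\Lambda_0,F_r]+A_r-\Lambda_r=(\text{expression in }A_1,\dots,A_{r-1},F_1,\dots,F_{r-1})$ (with $\Lambda_r$ understood as a contribution to $D$ once $r=q+1$, and as a forced vanishing for $r>q+1$). Because the $\lambda_i$ are distinct, the operator $X\mapsto[\Lambda_0,X]$ is invertible on off-diagonal matrices, so at each step the off-diagonal part of $F_r$ is uniquely determined, the diagonal part of $F_r$ may be normalized to $0$, and the diagonal part of the known right-hand side defines $\Lambda_r$ (resp.\ $D$). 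Setting $Q(z)=\int z^q\Lambda(z)\,dz$, which is diagonal with leading term $\frac{z^{q+1}}{q+1}\lambda_i$, and $\hat Y(z)=P\hat F(z)$, the product $\hat Y(z)z^De^{Q(z)}$ is a formal solution with $\det\hat Y_0=\det P\neq 0$ (here, as usual after passing to the coordinate at infinity, the formal series is $\hat Y(z)\sim\sum\hat Y_rz^{-r}$).

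\emph{Step 2: analytic realization on a narrow sector.} The series $\hat F(z)$ typically diverges, but is Gevrey of order $1/(q+1)$. Fix an open sector $S$ at the origin of opening $<\pi/(q+1)$. For each ordered pair $(i,j)$ the quantity $\operatorname{Re}\bigl(q_i(z)-q_j(z)\bigr)$ behaves like $\frac{|z|^{q+1}}{q+1}|\lambda_i-\lambda_j|\cos\bigl((q+1)\arg z+\gamma_{ij}\bigr)$, whose sign changes in $\arg z$ (the Stokes rays for that pair) are spaced exactly $\pi/(q+1)$ apart; hence, after at worst splitting $S$ into finitely many subsectors and patching, one may assume each such real part is monotone and of constant sign on $S$, i.e.\ there is a well-defined dominance ordering of the exponentials $e^{q_i}$. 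Writing $Y=\hat Y_N(z)z^De^{Q(z)}\bigl(I+R(z)\bigr)$ with $\hat Y_N$ the $N$th partial sum, one converts \eqref{Stokes1} into an ODE for $R$ and then into an integral equation $R=\mathcal T_N(R)$ in which the $(i,j)$ entry of the inhomogeneity is integrated along a path in $S$ from a basepoint (a finite point, or $\infty$ along a direction where the relevant $e^{q_i-q_j}$ decays) dictated by the dominance ordering, so that all exponential kernels $e^{q_i(z)-q_i(\zeta)}$ on the contour are bounded. A majorant/contraction estimate — and it is precisely the opening bound $<\pi/(q+1)$ that makes these kernels integrable and $\mathcal T_N$ a contraction near $\infty$ — yields a unique small fixed point with $R(z)=O(z^{-(N+1)})$ in $S$; since $N$ is arbitrary and the resulting solutions agree (uniqueness for \eqref{Stokes1} together with the normalization pins down the constant), one obtains a single actual solution $Y(z)=\hat Y^{\mathrm{an}}(z)z^De^{Q(z)}$ with $\hat Y^{\mathrm{an}}(z)\sim\sum\hat Y_rz^{-r}$ in $S$. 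Equivalently, one can replace this step by Borel--Laplace summation: $\hat F$ is $(q+1)$-summable in every non-Stokes direction, a sector of opening $<\pi/(q+1)$ excludes incompatible pairs of such directions, and summing $\hat F$ along a direction bisecting $S$ produces $\hat Y^{\mathrm{an}}$ directly.

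\emph{Main obstacle.} Step 1 is purely algebraic and routine once $A_0$ is diagonalized; the genuine work, and the point requiring care, is Step 2 — organizing the integral equation with a \emph{globally consistent} choice of integration contours (one matrix solution, not an entry-by-entry patchwork), establishing the contraction estimate uniformly up to $z=\infty$, and bootstrapping the bound $O(z^{-(N+1)})$ into a full asymptotic expansion to all orders. This is also where the Stokes phenomenon enters: $D$ and $Q(z)$ are intrinsic to $A(z)$, but the analytic solution $Y$ depends on the chosen sector, and comparing the solutions attached to overlapping sectors is exactly what produces the Stokes matrices used later in the paper.
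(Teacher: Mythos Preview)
The paper does not prove this theorem: it is stated in the background section with attribution to Birkhoff and an implicit reference to Wasow's monograph, and is then used as a black box in Lemmas~\ref{lem2} and~\ref{lem3}. So there is no ``paper's own proof'' to compare against.

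That said, your proposal is the standard and correct argument, essentially the one in Wasow's book. Step~1 (formal diagonalization via the recursion $[\Lambda_0,F_r]=\cdots$, solvable off-diagonal because the eigenvalues are distinct) and Step~2 (analytic realization on a sector of opening $<\pi/(q+1)$ by an integral-equation contraction, or equivalently by multisummability) are exactly the two stages Wasow carries out. Your identification of the main obstacle --- the uniform contour organization and the bootstrap to all orders --- is accurate. One cosmetic remark: the paper's statement writes $\hat Y(z)\sim\sum \hat Y_r z^{r}$ as $z\to\infty$, which is almost certainly a typo for $z^{-r}$; you silently corrected this, which is the right call.
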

This expression for $Y(z)$ shows that the asymptotics of $Y(z)$ are determined by the term $e^{Q(z)}$  and so its leading term $\frac{x^{q+1}}{q+1}\lambda_i$ define Stokes lines for \eqref{Stokes1}. 
\begin{definition} The Stokes lines for \eqref{Stokes1} are given by $$l_{ij} = \{ Re(x^{q+1}(\lambda_i - \lambda_j)) = 0 \}.$$ for each $i \neq j$ \end{definition}
The collection $\{ l_{ij} \}$ of Stokes lines cut the plane into a finite number of sectors $\{ S_\alpha \}$. For each such sector $S_\alpha$, there exists a solution $Y_\alpha$ satisfying \eqref{asymp} in $S_\alpha$. Two solutions from adjacent sectors can be related by Stokes matrices. Let I$_n$ be denote the $n x n$ identity matrix and $E_{ij}$ denote the matrix with 1 in the $ij$-entry and 0 elsewhere. 
\begin{definition} Let sectors $S_\alpha$ and $S_\beta$ be adjacent and separated by a separation ray $l_{ij}$. Then given $Y_\alpha$ there exists a matrix of the form $$A_{ij} = \mbox{I}_{n} + aE_{ij}$$ for some $a \in \mathbb{C}$ so that $Y_\beta = Y_\alpha A_{ij}$ satisfies \eqref{asymp} in $S_\beta$. The matrix $A_{ij}$ is called the Stokes matrix associated to the Stokes line $l_{ij}$. \end{definition}

\subsection{Asymptotic Cones}
In section \ref{building} we consider a family of equivariant maps from $\tilde{X}$ to SL$_n(\mathbb{C})/\mbox{SU}(n)$. We interpret the limit of these maps by showing that they induce a map to the asymptotic cone Cone$(\mbox{SL}_n(\mathbb{C})/\mbox{SU}(n))$. The asymptotic cone of a metric space $X$ is an associated metric space that captures the geometry of $X$ at infinity. More precisely, let $X$ be a metric space, let $\{ x_k \}$ be a sequence of points in $X$, let $\{\lambda_k \}$ be a sequence of scalars so that $\lambda_k \to \infty$, and let $\alpha$ be a ultrafliter. 

\begin{definition}
The asymptotic cone of $(Cone(X),d) = Cone(X,\{ x_k \}, \{ \lambda_k \} )$ $X$, is a set defined by $$(Cone(X),d) = \{ (y_k) \in X^{\mathbb{N}} : \frac{d(x_k,y_k)}{\lambda_k} \mbox{ is bounded} \} / \cong $$
where $(y_k) \cong (z_k) \mbox{if and only if } \frac{d(x_k,y_k)}{\lambda_k} \to 0$; equipped with a metric $$d([y_k],[z_k]) = \lim_{\alpha} \left( \frac{d(x_k,y_k)}{\lambda_k}\right)$$ where $[x_k]$ denotes the equivalent class of $(x_k)$. 
\end{definition}

When the context is clear we will denote (Cone$(X),d)$ simply as Cone$(X)$. An example is given by Cone$(\mathbb{R}^n) = \mathbb{R}^n$. 

In the case where $X$ is a symmetric space, it is known that Cone$(X)$ is independent of the ultrafilter \cite{Thornton} and has the structure of a Euclidean building \cite{Kleiner-Leeb}. As a building, Cone$(X)$ has a collection of apartments. An apartment of a building is a isometric embedding $F: \mathbb{R}^{k} \rightarrow \mbox{Cone}(X)$ satisfying some compatibility conditions. That is to say for each $x,y \in $ Cone$(X)$ there exists an isometric embedding $F: \mathbb{R}^{k} \rightarrow \mbox{Cone}(X)$ so that $x,y \in F(\mathbb{R}^k)$. 

In the case of $X = \mbox{SL}_n(\mathbb{C})/\mbox{SU}(n)$, the apartments are given by the limit of flats in SL$_n(\mathbb{C})/\mbox{SU}(n)$. In particular let $$A = \{ \vec{x} \in \mathbb{R}^n | \Sigma x_i = 0 \} \subset \mathbb{R}^n.$$ If we define $f: A \rightarrow \mbox{SL}_n(\mathbb{C})/\mbox{SU}(n)$ by $$f(\vec{x}) = \mbox{diag}(e^{x_1},\cdots,e^{x_n})$$ then it is well-known that $f$ parametrizes a flat in SL$_n(\mathbb{C})/\mbox{SU}(n)$. The map $f$ induces a map to Cone$(\mbox{SL}_n(\mathbb{C})/\mbox{SU}(n), Id, \{ \lambda_k \})$ in the following way: $$F: A \rightarrow \mbox{Cone}(\mbox{SL}_n(\mathbb{C})/\mbox{SU}(n))$$ $$\vec{x} \to F(\vec{x}) = [ f(\lambda_k\vec{x})].$$ One can show that F maps to an apartment in the cone. 

\section{Local results}

\label{lems}
In this section we will prove two lemmas on the local asymptotics of the solutions to the differential equation. Then in section \ref{PROOF} we will \textit{glue} together these results to prove Theorem \ref{thm1}. In the case where $(q_2,q_3,\cdots,q_n) = (0,0,\cdots,t\omega_n)$ for $\omega_n \in \mbox{H}^0(X,\mathcal{K}^n)$ where $\omega_n$ has simple zeros, equation \eqref{eq0} becomes
\begin{equation} \label{eq1} y^{(n)} + t\omega_n y = 0.
\end{equation} 

To study the asymptotics of the differential equation \eqref{eq1}, we use two model equations. For $z_0 \in \tilde{X}$ not a zero of $\omega_n$ the model equation is 
\begin{equation} \label{M1} y^{(n)} +ay = 0 \tag{M1} \end{equation}
where $a$ is a constant. Here the solutions are known explicitly. We first show in Lemma \ref{lem1} that near $z_0$, up to a change of coordinates, solutions to \eqref{eq1} become asymptotically solutions to \eqref{M1} the model equation. For the case $z_0$ a zero of $\omega_n$ the model equation is 
\begin{equation} \label{M2} y^{(n)}(\zeta) + \zeta^k y(\zeta) = 0 \tag{M2} \end{equation}
where $k$ is the order of the zero of $\omega_n$. Although we don't know explicitly the solutions to \eqref{M2} in Lemma \ref{lem2} we compute the asymptotics and identify the Stokes lines of \eqref{M2}. For $k = 1$ Wasow shows that, near $z_0$ a zero of $\omega_n$, solutions to \eqref{eq1} become asymptotically solutions to the model equation \eqref{M2} (see \cite{Wasow}).

Equation \eqref{eq1} is equivalent to 
\begin{equation} \label{eq2} \epsilon Y' = \left(\begin{array}{cccc} 0 & 1 & & \\ & 0 & 1 & \\ & & \ddots & 1 \\ -\omega_n & & & 0 \end{array} \right)Y = A(z)Y
\end{equation}
where $\epsilon = t^{-\frac{1}{n}}$ and $Y(z,t) = \mbox{diag}(1,\epsilon, \cdots, \epsilon^{n-1})W(y_1(z),\cdots,y_n(z))^T$.
\begin{lemma} \label{lem1}
If $z_0$ is not a zero of $\omega_n$, there is a open set $U_0$ containing $z_0$ such that there is a matrix solution $Y(z,t)$, with det$(Y(z,t)) \neq 0$ to \eqref{eq2} such that $$\lim_{t \to \infty}\left(Y(z,t)e^{-t^{\frac{1}{n}}\int B_0(z)}\right) < \infty$$  where $B_0(z) = diag(\omega_n^{\frac{1}{n}},\cdots,\omega_n^{\frac{1}{n}}\lambda^{n-1}) $ and $\lambda = e^{\frac{2\pi i}{n}}$. This shows that there exists a basis $\{ y_i(z) \}$ of solution \eqref{eq1} such that in the natural coordinates $\zeta$ we have $$y_i(\zeta) \sim e^{t^{\frac{1}{n}}\lambda^i \zeta}.$$
\end{lemma}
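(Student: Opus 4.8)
The plan is to bring the rescaled system \eqref{eq2} into the standard turning‑point‑free normal form for a linear system with a large parameter, and then to produce the required solution by the classical WKB integral‑equation argument; in the natural coordinate $\zeta$ this exhibits \eqref{eq1} as a $t$‑independent perturbation of the model \eqref{M1} (with $a=t$), whose explicit solutions $e^{\mu_{j}\zeta}$ are the ones the solutions of \eqref{eq1} must be asymptotic to. The first step is to diagonalise the principal part near $z_{0}$: the companion matrix $A(z)$ in \eqref{eq2} has characteristic polynomial $\mu^{n}+\omega_{n}(z)$, so on a neighbourhood of $z_{0}$ (where $\omega_{n}(z_{0})\neq0$) it has $n$ distinct eigenvalues $\mu_{0}(z),\dots,\mu_{n-1}(z)$ — namely $\lambda^{j}$ times a fixed holomorphic $n$‑th root of $-\omega_{n}(z)$, i.e. the diagonal entries of $B_{0}(z)$ — with holomorphic, invertible Vandermonde eigenvector matrix $T(z)=\bigl(\mu_{j}(z)^{\,i}\bigr)_{i,j}$. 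Substituting $Y=T(z)Z$ puts \eqref{eq2} in the form
\[
\epsilon\, Z'\;=\;\bigl(B_{0}(z)+\epsilon\,R(z)\bigr)Z,\qquad R(z):=-\,T(z)^{-1}T'(z),
\]
which has no turning point on that neighbourhood. Note that $\int_{z_{0}}^{z}B_{0}=\operatorname{diag}(\zeta,\lambda\zeta,\dots,\lambda^{n-1}\zeta)$ with $\zeta=\int_{z_{0}}^{z}\omega_{n}^{1/n}$ the natural coordinate, so $e^{t^{1/n}\int B_{0}}=\operatorname{diag}\bigl(e^{t^{1/n}\lambda^{j}\zeta}\bigr)$ is exactly the matrix appearing in the statement.

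Next I would construct the solution in the form $Y(z,t)=T(z)\,P(z,t)\,e^{t^{1/n}\int_{z_{0}}^{z}B_{0}}$, which reduces the problem to $\epsilon P'=[B_{0},P]+\epsilon RP$. The diagonal entries of this equation are exactly $(P_{jj})'=(RP)_{jj}$, while the off‑diagonal $(j,k)$ entries are scalar equations $\epsilon u'=(\mu_{j}-\mu_{k})u+\epsilon(RP)_{jk}$. Rewriting the whole system as an integral equation, normalising $P(z_{0},t)=I$, and integrating each scalar component in the direction along which the factor $e^{t^{1/n}\int(\mu_{j}-\mu_{k})}$ does not grow — from $z_{0}$ for the subdominant components and from an endpoint of $U_{0}$ for the dominant ones — makes the associated operator a contraction on $C^{0}(\overline{U_{0}})$ for all large $t$, provided $U_{0}$ is taken to be a sufficiently small sector with vertex $z_{0}$ meeting none of the lines $\{\operatorname{Re}\int_{z_{0}}^{z}(\mu_{j}-\mu_{k})=0\}$ through $z_{0}$, i.e. the Stokes lines of \eqref{M1}. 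This is the asymptotic‑existence theorem of Wasow \cite{Wasow} for systems free of turning points; the resulting fixed point satisfies $P(z,t)\to\operatorname{diag}\bigl(e^{\int_{z_{0}}^{z}R_{jj}}\bigr)$ uniformly on $U_{0}$ as $t\to\infty$.

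The lemma then follows quickly. Since $Y(z,t)e^{-t^{1/n}\int B_{0}}=T(z)P(z,t)$ converges as $t\to\infty$ to the finite and invertible matrix $T(z)\operatorname{diag}\bigl(e^{\int_{z_{0}}^{z}R_{jj}}\bigr)$, the limit exists and is finite, and in particular $\det Y(z,t)\neq0$ for large $t$. Unwinding $Y=\operatorname{diag}(1,\epsilon,\dots,\epsilon^{n-1})W(y_{1},\dots,y_{n})^{T}$ and reading off its first row produces a basis $\{y_{i}\}$ of \eqref{eq1} with $y_{i}(\zeta)\sim e^{t^{1/n}\lambda^{i}\zeta}$ in the natural coordinate, which is the model solution $e^{\mu_{i}\zeta}$ of \eqref{M1} — the precise sense in which solutions of \eqref{eq1} become asymptotically solutions of \eqref{M1}.

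The main obstacle is the control hidden inside the contraction step: the off‑diagonal entries of $P$ whose governing exponentials $e^{t^{1/n}\int(\mu_{j}-\mu_{k})}$ are dominant cannot be estimated by integrating from $z_{0}$, which is precisely why $U_{0}$ must be small and shaped by the Stokes geometry of the model. No single asymptotic form can be valid on all of $\tilde{X}\setminus Z_{\omega_{n}}$, and it is this global Stokes phenomenon that later forces the decomposition $\gamma=\gamma_{1}+\cdots+\gamma_{N}$ together with its Stokes matrices in Theorem \ref{thm1}.
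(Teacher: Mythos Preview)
Your approach is correct and is essentially the same as the paper's: both diagonalise $A(z)$ by its Vandermonde eigenvector matrix (your $T(z)$ is the paper's $P_0(z)$) and then appeal to Wasow's large-parameter asymptotic theory for systems without turning points. The only difference is presentational --- the paper invokes Wasow's Theorem~25.2 as a black box producing a full formal diagonalisation $Y=P(z,\epsilon)Z$ and then reads off the explicit limit $\omega_n^{(1-n)/(2n)}P_0$ (used repeatedly later), whereas you perform only the leading-order conjugation and sketch the underlying integral-equation contraction; your limit $T(z)\operatorname{diag}\bigl(e^{\int R_{jj}}\bigr)$ agrees with theirs once one computes $\operatorname{diag}(T^{-1}T')=\frac{n-1}{2n}\frac{\omega_n'}{\omega_n}I_n$.
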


\begin{proof}
In a neighborhood around $z_0$, the matrix $A(z)$ has distinct eigenvalues. Then by Theorem 25.2 in \cite{Wasow}, there exists an open set $U$ containing $z_0$ and a transformation $Y = P(z,\epsilon)Z$ such that the differential equation \eqref{eq2} becomes 
\begin{equation} \label{eq3} \epsilon Z' = B(z,\epsilon)Z \end{equation} 
where $B(z,\epsilon)$ has asymptotic expansion $B(z,\epsilon) \sim \Sigma_{r=0}^{\infty} B_r(z)\epsilon^r$ with $B_r(z)$ diagonal and $B_0(z)$ = diag$(\lambda_1,...,\lambda_n)$; here $\lambda_i$ are the eigenvalues of $A(z)$. We have a matrix solution $Z(z)$ to \eqref{eq3} satisfying $$Z(z) = \hat{Z}(z,\epsilon)e^{\frac{1}{\epsilon}\int B_0(z)}e^{\int B_1(z)}$$ $\hat{Z}(z,\epsilon)$ has asymptotic expansion $\Sigma \hat{Z}_r(z) \epsilon^r$, with $\hat{Z}_0(z) = \mbox{I}_{n}$. Substituting this expression for $Z(z)$ yields a matrix solution $Y(z,\epsilon)$ to \eqref{eq2} where 
\begin{align} \label{EQ2} Y(z,\epsilon) &= P(z,\epsilon)Z(z) \\ \nonumber &= P(z,\epsilon)\hat{Z}(z,\epsilon)e^{\frac{1}{\epsilon}\int B_0(z)}e^{\int B_1(z)}. \end{align}

The leading term for $Y(z,\epsilon)$ are given in terms of $P_0$, $B_0$ and $B_1$. We know want to compute $P_0$, $B_0$ and $B_1$ in terms of $\omega_n$. If $P(z,\epsilon)$ has asymptotic expansion $P(z,\epsilon) \sim \Sigma P_r \epsilon^r$, the relationship between $Z$ and $Z'$ is $$\epsilon Z' = (P^{-1}AP - \epsilon P^{-1}P')Z = BZ.$$ The above equation relates the asymptotic expansions of $P(z,\epsilon)$ and $B(z,\epsilon)$ from which we obtain the equations 
\begin{equation} \label{Bzero} B_0 = P_0^{-1}AP_0 \end{equation}
and
\begin{equation} \label{Bone} B_1 = B_0 P_0^{-1}P_1 - P_0^{-1}P_1 B_0 - P_0^{-1}P_0'. \end{equation} 
Equation \eqref{Bzero} tells us that $P_0$ diagonalizes $B_0$ so that we can take  $$P_0 = \left(\begin{array}{ccc} 1 & \cdots & 1 \\ \omega_n^{\frac{1}{n}} & & \lambda^{(n-1)}\omega_n^{\frac{1}{n}} \\ \vdots & & \vdots \\ \omega_n^{\frac{(n-1)}{n}} & \cdots & \lambda \omega_n^{\frac{(n-1)}{n}} \end{array} \right) = \mbox{diag}(1,\omega_n^{\frac{1}{n}},\cdots,\omega_n^{\frac{(n-1)}{n}})\left(\begin{array}{ccc} 1 & \cdots & 1 \\ 1 & & \lambda^{(n-1)} \\ \vdots & & \vdots \\ 1 & \cdots & \lambda  \end{array} \right) .$$ We can now compute $B_1$ using equation \eqref{Bone}. If we let diag$(A$) denote the diagonal part of a matrix $A$ we find $$B_1 = \mbox{diag}(B_1) = \mbox{diag}(B_0 P_0^{-1}P_1 - P_0^{-1}P_1 B_0 - P_0^{-1}P_0')$$ because $B_1$ is diagonal. Thus $$ B_1 = \mbox{diag}(B_0 P_0^{-1}P_1 - P_0^{-1}P_1 B_0) - \mbox{diag}(P_0^{-1}P_0')$$ $$ = -\mbox{diag}(P_0^{-1}P_0').$$ the last step uses the linearity of diag($A$).
One can check that $\mbox{diag}(P_0^{-1}P_0') = \frac{n-1}{2n}\frac{\omega_n'}{\omega_n}\mbox{I}_{n} = -B_1$ 
\begin{equation}\label{EQ} e^{\int B_1(z)} = \omega_n^{\frac{1-n}{2n}}\mbox{I}_{n}. \end{equation}
Then (\ref{EQ2}) together with (\ref{EQ}) imply that we have the following asymptotic expansion for $Y(z,\epsilon)$; $$ Y(z,\epsilon)e^{-\frac{1}{\epsilon}\int B_0(z)} = P(z,\epsilon)\hat{Z}(z)\omega_n^{\frac{1-n}{2n}} = \Sigma \hat{Y}_r(z) \epsilon^r $$ with $\hat{Y}_0(z) = \omega_n^{\frac{1-n}{2n}} P_0Z_0 = \omega_n^{\frac{1-n}{2n}} P_0$. This tells us that \begin{equation}
\lim_{\epsilon \to 0}\left(Y(z,\epsilon)e^{-\frac{1}{\epsilon}\int B_0(z)}\right) = \omega_n^{\frac{1-n}{2n}}P_0.
\end{equation}
In terms of solutions to \eqref{eq1}, we have the following: there is a basis $y_i(z)$ of solutions such that $$y_i(z) \sim \omega_n^{\frac{1-n}{2n}} e^{\frac{\lambda^i}{\epsilon}\int \omega_n^{\frac{1}{n}}}.$$ After we change coordinates to $\zeta = \int \omega_n^{\frac{1}{n}}$, we have that $$y_i(\zeta) \sim e^{t^{\frac{1}{n}}\lambda^i \zeta}.$$
\end{proof}
The next lemma will compute the Stokes lines and asymptotics for the model equation \eqref{M2}. A theorem in \cite{wasow1963simplification} will show that in a neighborhood of a zero of $\omega_n$, there exists a transformation so that \eqref{eq1} to \eqref{M2}. 
\begin{lemma} \label{lem2}
For any sector $S$ centered at the orgin with central angle $\theta < \frac{n\pi}{(n+k)}$. There exists a basis of solutions $\{y_i(\zeta)\}$ to \begin{equation} \label{eq4} y^{(n)} - \zeta^k y = 0 \end{equation}
such that $y_i(\zeta) \sim \zeta^{\frac{k(1-n)}{2n}}e^{\lambda^i\frac{n \zeta^{\frac{n+k}{n}}}{n+k}}$ for $\zeta \in S$, and equation \eqref{eq4} has Stokes lines \newline $l_{ij} = \{ Re((\lambda^i -\lambda^j)\zeta^{\frac{n+k}{n}}) =0 \} $
\end{lemma}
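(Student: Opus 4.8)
The plan is to reduce \eqref{eq4} to a first‑order linear system with an irregular singular point of rank one at $\xi = \infty$, apply Birkhoff's theorem (Theorem \ref{Wasow}) on a sector, and transport the resulting asymptotics back to the $\zeta$‑variable; the bookkeeping parallels the proof of Lemma \ref{lem1} closely.

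First I would write \eqref{eq4} as the companion system $Y' = A(\zeta)Y$, where $A(\zeta)$ has $1$'s on the superdiagonal and $\zeta^{k}$ in the lower‑left corner, $y$ being the first component of $Y$; the eigenvalues of $A(\zeta)$ are $\lambda^{i}\zeta^{k/n}$. To absorb the growing factor $\zeta^{k/n}$ I apply the gauge transformation $Y = D(\zeta)Z$ with $D(\zeta) = \operatorname{diag}(1,\zeta^{k/n},\dots,\zeta^{(n-1)k/n})$, which turns the system into $Z' = \bigl(\zeta^{k/n}C - \tfrac{k}{n\zeta}N\bigr)Z$, where $C$ is the cyclic matrix ($1$'s on the superdiagonal and a $1$ in the lower‑left corner, so $\det(\mu I - C) = \mu^{n}-1$) and $N = \operatorname{diag}(0,1,\dots,n-1)$. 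Changing the independent variable to $\xi = \tfrac{n}{n+k}\zeta^{(n+k)/n}$, so that $d\xi/d\zeta = \zeta^{k/n}$ and $\zeta^{(n+k)/n} = \tfrac{n+k}{n}\xi$, the system becomes $\dfrac{dZ}{d\xi} = \Bigl(C - \dfrac{k}{(n+k)\xi}N\Bigr)Z$, of the form $\xi^{-q}Z' = \tilde A(\xi)Z$ with $q = 0$, $\tilde A(\xi) \sim C + O(\xi^{-1})$, and leading coefficient $C$ with distinct eigenvalues $1,\lambda,\dots,\lambda^{n-1}$. On a $\zeta$‑sector of central angle $\theta < \tfrac{n\pi}{n+k}$ the map $\zeta\mapsto\xi$ is a biholomorphism onto a $\xi$‑sector of angle $\tfrac{n+k}{n}\theta < \pi = \tfrac{\pi}{q+1}$, so Theorem \ref{Wasow} gives a solution $Z(\xi) = \hat Z(\xi)\,\xi^{\tilde D}e^{\xi\Lambda}$ with $\Lambda = \operatorname{diag}(1,\lambda,\dots,\lambda^{n-1})$, $\tilde D$ constant diagonal, $\hat Z$ with invertible leading coefficient; and its Stokes lines in the $\xi$‑plane $\{\operatorname{Re}(\xi(\lambda^{i}-\lambda^{j})) = 0\}$ pull back to $l_{ij} = \{\operatorname{Re}((\lambda^{i}-\lambda^{j})\zeta^{(n+k)/n}) = 0\}$.

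To pin down $\tilde D$ explicitly — which is where the precise power of $\zeta$ in the statement comes from — I would proceed exactly as in Lemma \ref{lem1}: diagonalize $C = V\Lambda V^{-1}$ by the Vandermonde matrix $V = (\lambda^{(i-1)(l-1)})_{i,l}$, so that $W = V^{-1}Z$ satisfies $\dfrac{dW}{d\xi} = \bigl(\Lambda - \tfrac{k}{(n+k)\xi}V^{-1}NV\bigr)W$, and then use the standard formal simplification (Wasow, \cite{Wasow}) to put this in diagonal form $\dfrac{dU}{d\xi} = B(\xi)U$ with $B(\xi)\sim\Lambda + B_{1}\xi^{-1}+\cdots$, $B_{1}$ being the diagonal part of $-\tfrac{k}{n+k}V^{-1}NV$. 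A short computation gives $(V^{-1}NV)_{ll} = \tfrac1n\sum_{i=1}^{n}(i-1) = \tfrac{n-1}{2}$ for every $l$, hence $B_{1} = -\tfrac{k(n-1)}{2(n+k)}I$ and $\tilde D = -\tfrac{k(n-1)}{2(n+k)}I$, i.e. $\xi^{\tilde D} = \xi^{-k(n-1)/(2(n+k))}I$. Taking first components and using $D(\zeta)_{11}=1$, the $i$‑th solution satisfies $y_{i}(\zeta)\sim \xi^{-k(n-1)/(2(n+k))}e^{\lambda^{i}\xi}$ up to a nonzero constant; substituting $\xi = \tfrac{n}{n+k}\zeta^{(n+k)/n}$ and absorbing the constant into the normalization of the basis yields $y_{i}(\zeta)\sim\zeta^{\frac{k(1-n)}{2n}}e^{\lambda^{i}\frac{n\zeta^{(n+k)/n}}{n+k}}$, as claimed.

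The main obstacle is the third step: the qualitative Birkhoff statement does not by itself determine $\tilde D$, so one genuinely needs the explicit formal‑diagonalization argument and the Vandermonde trace computation, and some care is required with the branched substitution $\xi = \zeta^{(n+k)/n}$ — restricting to the sector makes it single‑valued, but one must track the angle bookkeeping $\tfrac{n+k}{n}\theta < \pi$ and check that the lower‑order terms of $\tilde A(\xi)$ really are $O(\xi^{-1})$ with a genuine asymptotic expansion, which follows from $\zeta^{(n+k)/n} = \tfrac{n+k}{n}\xi$. Everything else is routine linear algebra and the transport of asymptotic expansions under the gauge and coordinate changes.
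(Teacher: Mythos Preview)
Your argument is correct; it arrives at the same asymptotic formula and the same Stokes lines as the paper, but the reduction to Birkhoff normal form is organized quite differently.

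The paper first rewrites the companion system as $\zeta^{-k}Y' = (\cdots)Y$, so that the leading coefficient matrix is \emph{nilpotent}; it then follows Wasow's general shearing algorithm, applying $n-1$ successive transformations of the form $Z\mapsto WS(\zeta)Z$ (a cyclic permutation composed with the diagonal $S(\zeta)=\operatorname{diag}(1,\zeta^{k/n},\dots,\zeta^{(n-1)k/n})$), each step adding one more $1$ to the superdiagonal of the leading block until it becomes the cyclic matrix with distinct eigenvalues. A final substitution $\zeta=\alpha t^{n}$ then puts the system in the hypotheses of Theorem~\ref{Wasow} with rank $q=n+k-1$, and the exponent $\Lambda_{0}$ is read off from the accumulated $\hat A(n)$ terms and the transformation $T(\zeta)$.

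Your route bypasses the iterated shearing entirely: the single gauge $Y=D(\zeta)Z$ already produces $Z'=(\zeta^{k/n}C-\tfrac{k}{n\zeta}N)Z$ with $C$ having distinct eigenvalues, and the substitution $\xi=\tfrac{n}{n+k}\zeta^{(n+k)/n}$ collapses the irregular rank to $q=0$. The determination of $\tilde D$ via $\operatorname{diag}(V^{-1}NV)=\tfrac{n-1}{2}I$ is exactly the analogue of the $B_{1}=-\operatorname{diag}(P_{0}^{-1}P_{0}')$ step in Lemma~\ref{lem1}, and the angle bookkeeping $\tfrac{n+k}{n}\theta<\pi$ matches the paper's $n\cdot\tfrac{\pi}{n+k}$ bound. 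Your approach is shorter and makes the $\zeta^{k(1-n)/(2n)}$ prefactor transparent; the paper's iterated shearing has the advantage of being the textbook procedure for an arbitrary nilpotent leading part, so it would generalize more mechanically to equations whose companion matrix does not admit a one-step diagonal conjugation to a matrix with simple spectrum.
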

\begin{proof}
The differential equation \eqref{eq4} is equivalent to the following system
\begin{equation*} \frac{dY}{d\zeta} = Y^\prime(\zeta) = \begin{pmatrix}
0 & 1 & & \\
& \ddots & \ddots & \\
& & \ddots & 1 \\
\zeta^k & & & 0 \\
\end{pmatrix} Y(\zeta).
\end{equation*}
We can rewrite the equation above as 
\begin{equation}
\label{EQ5}
\zeta^{-k}Y^\prime(\zeta) = \begin{pmatrix}
0 & \zeta^{-k} & & \\
& \ddots & \ddots & \\
& & \ddots & \zeta^{-k} \\
1 & & & 0 \\
\end{pmatrix} Y(\zeta).
\end{equation}
To compute the asymptotics we want to apply a theorem of Wasow (Theorem \ref{Wasow}). In order to apply the theorem, we will apply a transformation $Z(\zeta) = T(\zeta)Y(\zeta)$ so that $Z(\zeta)$ satisfies an equation of the form \begin{equation} \label{EQ6} \zeta^{-q} Z'(\zeta) = A(\zeta)Z(\zeta) 
\end{equation} where $A(\zeta) \sim \sum_{r=0}^\infty A_r \zeta^{-r}$ with $A_0$ having distinct eigenvalues. Then relating $Z(\zeta)$ back to $Y(\zeta)$ we find the asymptotics of $Y(\zeta)$. For convenience we first apply the transformation $Z_1 = \begin{pmatrix} & 1\\\mbox{I}_{n-1} & \\ \end{pmatrix} Y = WY$, so that the leading term is in Jordan canonical form and  the equation \eqref{EQ5} becomes
\begin{equation}
\label{Z1}
\zeta^{-k}Z_1^\prime(\zeta) = \begin{pmatrix}
0 & 1 & & &\\
& 0& \zeta^{-k} & & \\
& & \ddots & \ddots & \\
& & & \ddots& \zeta^{-k} \\
\zeta^{-k} & & & & 0 \\
\end{pmatrix} Z_1(\zeta).
\end{equation}
Here the leading coefficient matrix is nilpotent. We will apply the shearing transformation below multiple times until we obtain a leading coefficient matrix with distinct eigenvalues.  We begin with the following shearing transformation $$W_1 = S(\zeta)Z_1, \quad \mbox{where} \quad S(\zeta) = \begin{pmatrix} 1 & & & \\ & \zeta^{\frac{k}{n}} & & \\ & & \ddots & \\ & & & \zeta^{\frac{(n-1)k}{n}} \\ \end{pmatrix}$$ 
to obtain 
$$
\zeta^{-k}W_1^\prime(\zeta) = \Bigg[\begin{pmatrix}
0 & \zeta^{\frac{-k}{n}} & & &\\
& 0 & \zeta^{-k- \frac{k}{n}} & & \\
& & \ddots & \ddots & \\
& & & \ddots & \zeta^{-k-\frac{k}{n}} \\
\zeta^{\frac{-k}{n}} & & & & 0 \\
\end{pmatrix} + k\zeta^{-k-1}A(n)\Bigg]W_1(\zeta)
$$
where $A(n) = \frac{1}{n} \begin{pmatrix} 0 & & & \\ & 1 & & \\ & & \ddots & \\ & & & n-1 \\ \end{pmatrix}.$
Next re-normalize by multiplying both sides of the equation above by $\zeta^{\frac{k}{n}}$ to obtain
$$
\zeta^{-k+\frac{k}{n}}W_1^\prime(\zeta) = \Bigg[\begin{pmatrix}
0 & 1 & & &\\
& 0 & \zeta^{-k} & & \\
& & \ddots & \ddots & \\
& & & \ddots & \zeta^{-k} \\
1 & & & & 0 \\
\end{pmatrix} + k\zeta^{-k-1+\frac{k}{n}}A(n)\Bigg]W_1(\zeta).
$$ 
As before for convenience we apply the transformation $Z_2(\zeta) = \begin{pmatrix} & 1\\ \mbox{I}_{n-1} & \\ \end{pmatrix}W_1(\zeta)$ so that the leading term is in Jordan canonical form  to obtain 
\begin{equation}
\label{Z2}
\zeta^{-k+\frac{k}{n}}Z_2^\prime(\zeta) = \Bigg[\begin{pmatrix}
0 & 1 & & & &\\
& 0 & 1 & & &\\
& & 0 & \zeta^{-k} & & \\
& & &\ddots & \ddots & \\
& & & &\ddots & \zeta^{-k} \\
\zeta^{-k} & & & & &0 \\
\end{pmatrix} + k\zeta^{-k-1+\frac{k}{n}}\sigma(A(n))\Bigg]Z_2(\zeta),
\end{equation}
where $\sigma$ is conjugation by $W$. Comparing \eqref{Z1} and \eqref{Z2}, we find $Z_2 = \begin{pmatrix} & 1\\ I & \\ \end{pmatrix} S(\zeta) Z_1 = [WS(\zeta)]Z_1$ and the leading term  \begin{equation*}
\begin{pmatrix}
0 & 1 & & \\
& 0 & &\\
& &\ddots &\\
& & & 0 \\
\end{pmatrix}\end{equation*} 
has become
  \begin{equation*}
\begin{pmatrix}
0 & 1 & & \\
& 0 & 1 &\\
& &\ddots &\\
& & & 0 \\
\end{pmatrix}.
\end{equation*}

Each time we apply the above transformations the leading matrix gains a 1 in the upper diagonal. We do this until the leading matrix has distinct eigenvalues, which happens when $Z_n = [\begin{pmatrix} & 1\\ I & \\ \end{pmatrix}S(\zeta)]^{n-1}Z_1,$ resulting in the equation
\begin{equation} \label{eq7}
\zeta^{\frac{-k}{n}}Z_n^\prime(\zeta) = \Bigg[\begin{pmatrix}
0 & 1 & & \\
& \ddots & 1 & \\
& & \ddots & 1 \\
1 & & & 0 \\
\end{pmatrix} + k\zeta^{-1-\frac{k}{n}}\hat{A}(n)\Bigg]Z_n(\zeta),
\end{equation}
where $\hat{A}(n) = \sigma(A(n)) + \hdots + \sigma^{n-1}(A(n))$

For convenience we apply the transformation $Z = SZ_n$, where $$S\begin{pmatrix}
0 & 1 & & \\
& \ddots & 1 & \\
& & \ddots & 1 \\
1 & & & 0 \\
\end{pmatrix}S^{-1} = \begin{pmatrix} 1 & & & \\ & \lambda & & \\ & & \ddots & \\ & & & \lambda^{n-1} \\ \end{pmatrix} = D,$$ then equation \eqref{eq7} becomes
$$
\zeta^{\frac{-k}{n}}Z^\prime(\zeta) = [D + k\zeta^{-1-\frac{k}{n}}S\hat{A}(n)S^{-1}]Z(\zeta)
.$$
Now substituting $\zeta = \alpha t^n$ with $\alpha = n^{-\frac{n}{n+k}}$ yields

$$
t^{-(n+k-1)}Z^\prime(t) = [D
+ kt^{-n-k}nS\hat{A}(n)S^{-1}]Z(t).
$$
We now have an equation of the form \eqref{EQ6} where the leading matrix $A_0$ has distinct eigenvalues and we can apply Theorem \ref{Wasow}. This implies that for any central sector with angle $< \frac{\pi}{q+1}$, where $q = n + k-1$, there is a solution $Z(t)$ to the equation above such that $$Z(t) = \hat{Z}(t)e^{Q(t)}t^{\Lambda_0},$$
where $\hat{Z}(t) = \Sigma Z_r t^{-r}$ with $Z_0 = \mbox{I}_{n}.$

In this case $Q(t)$ and $\Lambda_0$ have the forms, $$Q(t) = \begin{pmatrix} 1 & & & \\ & \lambda & & \\ & & \ddots & \\ & & & \lambda^{n-1} \\ \end{pmatrix} \frac{t^{n+k}}{n+k} = D \frac{t^{n+k}}{n+k}$$
$$\mbox{and} \quad \Lambda_0 = \frac{k(n-1)^2}{2} \mbox{I}_{n}. $$
Of course, we are interested in the asymptotics for a solution $Y(\zeta)$ to the original equation \eqref{EQ5}.  To find this we can relate $Z(\zeta)$ to $Y(\zeta)$ by the following equations: 
$Z = SZ_n = S[WS(\zeta)]^{n-1}Z_1 = S[WS(\zeta)]^{n-1}WY $ or $Y(\zeta) = T(\zeta)Z(\alpha^{\frac{-1}{n}}\zeta^{\frac{1}{n}})$, where \begin{equation} \label{EQ3} T(\zeta)^{-1} = S\bigg[\begin{pmatrix} & 1\\ \mbox{I}_{n-1} & \\ \end{pmatrix}S(\zeta)\bigg]^{n-1}\begin{pmatrix} & 1\\ \mbox{I}_{n-1} & \\ \end{pmatrix}. \end{equation} 
So we have $Y(\zeta) = T(\zeta)\hat{Z}(\alpha^{\frac{-1}{n}}\zeta^{\frac{1}{n}}) e^{D \frac{n \zeta^{\frac{n+k}{n}}}{n+k}}\zeta^{\frac{(n-1)^2k}{2n}}.$

We can simpliy the expression for $T(\zeta)$ by noting $$[\begin{pmatrix} & 1\\ \mbox{I}_{n-1} & \\ \end{pmatrix}S(\zeta)]^n = \prod_{i=0}^{n-1} \zeta^{\frac{ik}{n}}\mbox{I}_{n},$$ so that \begin{equation} \label{EQ4} \bigg[\begin{pmatrix} & 1\\ \mbox{I}_{n-1} & \\ \end{pmatrix}S(\zeta)\bigg]^{n-1} = \prod_{i=0}^{n-1} \zeta^{\frac{ik}{n}}S^{-1}(\zeta)\begin{pmatrix} & 1\\ \mbox{I}_{n-1}& \\ \end{pmatrix}^{-1} \end{equation}
Equations \eqref{EQ3} and \eqref{EQ4} together imply $$T(\zeta)^{-1} = S\prod_{i=0}^{n-1} \zeta^{\frac{ik}{n}}S^{-1}(\zeta)\begin{pmatrix} & 1\\ \mbox{I}_{n-1} & \\ \end{pmatrix}^{-1}\begin{pmatrix} & 1\\ \mbox{I}_{n-1} & \\ \end{pmatrix} = SS^{-1}(\zeta)\zeta^{\frac{k(n-1)}{2}},$$ 
which gives us 
\begin{align*}Y(\zeta) &= S(\zeta)S^{-1}\zeta^{\frac{k(1-n)}{2}}\hat{Z}(\alpha^{\frac{-1}{n}}\zeta^{\frac{1}{n}}) e^{D \frac{n \zeta^{\frac{n+k}{n}}}{n+k}}\zeta^{\frac{(n-1)^2k}{2n}} \\ &= 
S(\zeta)S^{-1}\hat{Z}(\alpha^{\frac{-1}{n}}\zeta^{\frac{1}{n}})e^{D \frac{n \zeta^{\frac{n+k}{n}}}{n+k}}\zeta^{\frac{k(1-n)}{2n}}. \end{align*}

So we find there is a basis $\{ y_i(\zeta)\}$ of solutions to (2) so that $y_i(\zeta) \sim \zeta^{\frac{k(1-n)}{2n}}e^{\lambda^i\frac{n \zeta^{\frac{n+k}{n}}}{n+k}}$ for  $\zeta \in S$ any central sector with angle $ < \frac{n\pi}{q+1} = \frac{n\pi}{n+k}$ It then follows from Wasow \cite{Wasow} that the Stokes lines for these equations are at the rays where $l_{ij} = \{ Re((\lambda^i -\lambda^j)\zeta^{\frac{n+k}{n}}) =0 \} $ for some $i \neq j$ with Stokes matrix $A = \mbox{I}_{n}+aE_{ij}$ for some $a\in \mathbb{C}.$

\end{proof} 
We now return our attention back to equation \eqref{eq2}. The previous lemma and an application of \cite{wasow1963simplification} allows us to compute the asymptotics of \eqref{eq2} in a neighborhood of a zero of $\omega_n$. 
\begin{lemma} \label{lem3}
Let $z$ be a zero of $\omega_n$, then there is a open set $V$ containing $z$ such that there is a matrix solution $Y(z,t)$, with det$(Y(z,t)) \neq 0$, to \eqref{eq2} such that $$\lim_{t \to \infty}\left(Y(z,t)e^{-t^{\frac{1}{n}}\int B_0(z)}\right) < \infty$$ for z lying an a sector with $\zeta$-angle $< \frac{n\pi}{(n+1)}$ and has the same Stokes data as \ref{lem2}.
\end{lemma}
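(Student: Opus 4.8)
The plan is to reduce the system \eqref{eq2} in a neighborhood of the zero $z$ to the model equation \eqref{M2} (equivalently \eqref{eq4}) with $k=1$, to invoke Lemma \ref{lem2} for the asymptotics and Stokes data of the model, and then to transport this information back through the reduction. First I would pass to a local holomorphic coordinate $\zeta$ centered at $z$ in which $\omega_n$ has the normal form $\zeta^{k}(d\zeta)^{n}$ with $k=1$, absorbing the nonvanishing holomorphic unit into the coordinate. Writing \eqref{eq1} in this coordinate and passing to the associated first order system puts it in the form $\epsilon Y' = A(\zeta,\epsilon)Y$ with $\epsilon = t^{-1/n}$, where $A(\zeta,0)$ is the companion matrix whose eigenvalues are the $n$-th roots of $-\zeta^{k}$ and hence all coalesce at $\zeta=0$; thus $\zeta=0$ is a turning point of the expected type coming from the companion structure (the eigenvalues branch like $\zeta^{k/n}$).

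Next I would apply the turning-point simplification theorem of Wasow \cite{wasow1963simplification}: because the coalescence at $\zeta=0$ is of the admissible standard type, there is a transformation $Y = P(\zeta,\epsilon)Z$, holomorphic in $\zeta$ on a neighborhood $V$ of $z$ and admitting an asymptotic expansion in $\epsilon$, carrying the system to the canonical form, which is precisely the first order system of \eqref{M2}. Here I would track the diagonal prefactor in $P(\zeta,0)$ analogous to the $e^{\int B_1}=\omega_n^{(1-n)/(2n)}\mathrm{I}_n$ factor that appeared in the proof of Lemma \ref{lem1}; carried through the reduction it produces exactly the $\zeta^{k(1-n)/(2n)}$ prefactor already present in Lemma \ref{lem2}, so that the normalizations match and the product $Y(z,t)e^{-t^{1/n}\int B_0(z)}$ — which removes precisely the dominant exponential — is $P(\zeta,\epsilon)$ times the convergent series $\hat Z$, hence has a finite limit as $t\to\infty$ on any sector of $\zeta$-angle $<\frac{n\pi}{n+k}=\frac{n\pi}{n+1}$.

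Finally I would read off the Stokes statement. By Lemma \ref{lem2} the $Z$-system has a basis of solutions with $y_i(\zeta)\sim \zeta^{k(1-n)/(2n)}e^{\lambda^i n\zeta^{(n+k)/n}/(n+k)}$, Stokes lines $l_{ij}=\{\,\mathrm{Re}((\lambda^i-\lambda^j)\zeta^{(n+k)/n})=0\,\}$ and Stokes matrices $\mathrm{I}_n+aE_{ij}$; in the normal coordinate one checks $\int B_0 = \mathrm{diag}(1,\lambda,\dots,\lambda^{n-1})\,\tfrac{n}{n+1}\zeta^{(n+1)/n}$ (up to a constant of integration), so these exponentials agree with $e^{t^{1/n}\int B_0}$. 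Since $P(\zeta,\epsilon)$ is holomorphic, hence single-valued and Stokes-trivial on $V$, the Stokes lines and Stokes matrices of \eqref{M2} are inherited verbatim by \eqref{eq2} near $z$, which is the assertion that the solution has the same Stokes data as in Lemma \ref{lem2}.

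\textbf{Main obstacle.} The crux is checking that the hypotheses of Wasow's simplification theorem \cite{wasow1963simplification} genuinely hold at a \emph{simple} zero of $\omega_n$ — i.e. that this turning point is of the admissible type so that \eqref{eq2} is exactly (not merely asymptotically) transformable to the canonical model \eqref{M2} — and in keeping careful account of the several normalizations (the adapted coordinate $\zeta$, the rescaling $t\leftrightarrow\epsilon$, and the $\zeta$-power prefactors arising from the shearing transformations of Lemma \ref{lem2} and from $P(\zeta,0)$) so that the stated limit is finite and the matrix $B_0$ appearing in the statement is the correct one. Everything else is bookkeeping built on Lemma \ref{lem2}.
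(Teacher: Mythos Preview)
Your proposal is correct and follows essentially the same route as the paper: pass to the natural coordinate $\zeta$ with $\omega_n=\zeta\,d\zeta^n$, apply Wasow's turning-point simplification \cite{wasow1963simplification} to strip off the lower-order perturbation, invoke Lemma~\ref{lem2} on the model, and pull the asymptotics and Stokes data back through the transformation. One point to sharpen: Wasow's theorem does not hand you the first-order system of \eqref{M2} directly but rather the $\epsilon$-dependent system $\epsilon Z'=A_0(\zeta)Z$; the paper then removes $\epsilon$ by the explicit substitution $Z(\zeta,t)=t^{\frac{n-1}{2n(n+1)}}\mathrm{diag}(1,t^{-\frac{1}{n(n+1)}},\dots,t^{-\frac{n-1}{n(n+1)}})\,Y_*(\zeta t^{\frac{1}{n+1}})$, which is the ``rescaling $t\leftrightarrow\epsilon$'' you allude to in your obstacle paragraph --- make this step explicit and the argument goes through verbatim.
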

\begin{proof}
In $\zeta$ the natural coordinates of $\omega_n$, in a neighborhood of $z$ we have $\omega_n(\zeta) = \zeta d\zeta^n$. In these coordinates \eqref{eq1} becomes 
\begin{equation}
y^{(n)}+t\zeta y + \sum_{k=2}^{n-1} Q_k(\zeta)y^{(n-k)} = 0
\end{equation}
This is equivalent to 
\begin{equation} \epsilon Y' = \left[ \left(\begin{array}{cccc} 0 & 1 & & \\ & 0 & 1 & \\ & & \ddots & 1 \\ -\zeta & & & 0 \end{array} \right) + \epsilon^n\left(\begin{array}{cccc} 0 & 1 & & \\ & 0 & 1 & \\ & & \ddots & 1 \\ 0 & Q_{n-1}(\zeta) & \cdots & 0 \end{array} \right) \right]Y = [A_0(\zeta)+ \epsilon^n A_n(\zeta) ] Y
\end{equation}
where $Y(z,t) = \left(\begin{array}{c} y \\ \epsilon y' \\ \vdots \\ \epsilon^{n-1} y^{(n-1)} \end{array} \right),$ and $\epsilon = t^{-\frac{1}{n}}$. Now by applying a theorem in \cite{wasow1963simplification} we have that there is a $T(\zeta,t)$, such that $\lim_{t \to \infty} T(\zeta,t) = T_0$ and det($T_0) \neq 0$, so that if we let $Z(\zeta,t) = T(\zeta,t)Y(\zeta,t)$ we obtain 
\begin{equation} 
\label{W1} \epsilon Z' = \left(\begin{array}{cccc} 0 & 1 & & \\ & 0 & 1 & \\ & & \ddots & 1 \\ -\zeta & & & 0 \end{array} \right)Z = A_0(\zeta)Z.
\end{equation}
Let $Y_*(z)$ be a solution to the equation

$$Y_{*} '(\zeta) = \begin{pmatrix}
0 & 1 & & \\
& \ddots & \ddots & \\
& & \ddots & 1 \\
\zeta & & & 0 \\
\end{pmatrix} Y_*(\zeta).$$

Then a solution to equation \eqref{W1} is given by 
$$ Z(\zeta,t) = t^{\frac{n-1}{2n(n+1)}}\mbox{diag}(1,t^{-\frac{1}{n(n+1)}},\cdots ,t^{-\frac{n-1}{n(n+1)}})Y_*(\zeta t^{\frac{1}{n+1}}) $$
Lemma \ref{lem2} tells us that for $\zeta$ lying an a sector with angle $< \frac{n\pi}{(n+1)}$ we have 
$$Y_*(\zeta) = S(\zeta)S^{-1}\hat{Z}(\alpha^{\frac{-1}{n}}\zeta^{\frac{1}{n}})e^{D \frac{n \zeta^{\frac{n+1}{n}}}{n+1}}\zeta^{\frac{(1-n)}{2n}}.$$
This implies that 
\begin{align*}
 Z(\zeta,t)e^{-D \frac{n (\zeta t^{\frac{1}{n+1}})^{\frac{n+1}{n}}}{n+1}} &= S(\zeta)S^{-1}\hat{Z}(\alpha^{\frac{-1}{n}}(\zeta t^{\frac{1}{n+1}})^{\frac{1}{n}})\zeta^{\frac{(1-n)}{2n}} 
 \\  T(\zeta,t)Y(\zeta,t)e^{-D \frac{n (\zeta t^{\frac{1}{n+1}})^{\frac{n+1}{n}}}{n+1}} &= S(\zeta)S^{-1}\hat{Z}(\alpha^{\frac{-1}{n}}(\zeta t^{\frac{1}{n+1}})^{\frac{1}{n}})\zeta^{\frac{(1-n)}{2n}} \\ Y(\zeta,t)e^{-D \frac{n (\zeta t^{\frac{1}{n+1}})^{\frac{n+1}{n}}}{n+1}} &= T^{-1}(\zeta,t)S(\zeta)S^{-1}\hat{Z}(\alpha^{\frac{-1}{n}}(\zeta t^{\frac{1}{n+1}})^{\frac{1}{n}})\zeta^{\frac{(1-n)}{2n}}.
\end{align*}
Changing to $z$-coordinates and taking limits give us 
$$ \lim_{t \to \infty} Y(z,t)e^{-\frac{1}{\epsilon}\int B_0(z)} =$$ $$  \lim_{t \to \infty} T^{-1}(\zeta(z),t)S(\zeta(z))S^{-1}\hat{Z}(\alpha^{\frac{-1}{n}}(\zeta(z) t^{\frac{1}{n+1}})^{\frac{1}{n}})\zeta(z)^{\frac{(1-n)}{2n}} < \infty. 
$$
for z lying an a sector with $\zeta$-angle $< \frac{n\pi}{(n+1)}$ and we see that $Y(z,t)$ has the same Stokes data as $Y_*(\zeta)$.
\end{proof}
Lemma \ref{lem1} computes the asymptotics of the differential equation \eqref{eq1} away from a zero of $\omega_n$, and lemma \ref{lem3} computes the asymptotics near a zero. At any point $p \in \tilde{X}$ we can understand the local behavior. In the next section we use these results to prove \ref{thm1}.

\section{Proof of Theorem \ref{thm1}}

\label{PROOF}
The goal of this section is to prove the following theorem. 
\begin{theorem*}

Let $\rho_t$ be the holonomy associated to $(0,\cdots,t\omega_n)$ by the map above and $[\gamma] \in \pi_1(X)$. Then there exists a decomposition of $\gamma = \gamma_1 +\cdots+ \gamma_N$ and a collections of Stokes matrices $A_i$ so that $$\lim_{t \to \infty}\rho_t([\gamma])( \prod_{i=1}^{N} E(\gamma_{i} ,\omega_{n} )A_i)^{-1} = I_n$$
\end{theorem*}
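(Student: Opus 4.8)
The plan is to express $\rho_t([\gamma])$ as the transition matrix of the linear system \eqref{eq2} along a lift of $\gamma$, and then to evaluate that transition matrix by cutting $\gamma$ into short arcs on which Lemma~\ref{lem1} or Lemma~\ref{lem3} applies. First I would rewrite \eqref{holonmy}: since the diagonal rescaling $D_\epsilon=\mathrm{diag}(1,\epsilon,\dots,\epsilon^{n-1})$ used to pass from \eqref{eq1} to \eqref{eq2} does not depend on the basis, \eqref{holonmy} gives $\rho_t([\gamma]) = Y(z_0,t)^{-1}\widetilde Y(z_0,t)$, where $Y=D_\epsilon W(y_1,\dots,y_n)^T$ is the rescaled fundamental solution of \eqref{eq2} and $\widetilde Y$ is the one built from $y_i\circ\gamma\cdot(\gamma')^{\frac{1-n}{2}}$. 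Applying the Wronskian transformation rules and Proposition~\ref{induct} to $\widetilde Y$ shows $\widetilde Y(z_0,t)=D_\epsilon N(z_0)^{T}D_\epsilon^{-1}\,Y(\gamma z_0,t)$ for an upper-triangular matrix $N(z_0)$ built from $\gamma'$ and its derivatives, with diagonal $(\gamma')^{\frac{1-n}{2}}\,(1,\gamma',\dots,(\gamma')^{n-1})$; since $N$ is triangular, $D_\epsilon N^{T}D_\epsilon^{-1}$ converges as $t\to\infty$ to this diagonal matrix $\Gamma$, which is exactly the change-of-coordinate factor relating the limiting matrix $C(z)=\omega_n^{\frac{1-n}{2n}}P_0$ of Lemma~\ref{lem1} at $z_0$ and at $\gamma z_0$. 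Thus $\rho_t([\gamma])$ equals, up to an error tending to $I_n$, the transition matrix $\Phi(\gamma z_0,z_0;t)$ of \eqref{eq2} pre-multiplied by $\Gamma$.

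Next I would subdivide. The lift of $\gamma$ is compact, hence covered by finitely many open sets, each a set $U_0$ as in Lemma~\ref{lem1} around a non-zero of $\omega_n$ or a set $V$ as in Lemma~\ref{lem3} around a zero; I would refine so that each arc meeting a $V$ lies in a $\zeta$-sector of angle $<\tfrac{n\pi}{n+1}$ (so Lemmas~\ref{lem2}--\ref{lem3} apply), no subdivision point lies on a Stokes line, and each pair of consecutive arcs is separated by at most one Stokes line. A Lebesgue-number argument then produces $\gamma=\gamma_1+\dots+\gamma_N$ with endpoints $z_0=p_0,\dots,p_N=\gamma z_0$ and, on each $\gamma_i$, a distinguished fundamental solution $Y_i(z,t)$ with $G_i(z,\epsilon):=Y_i(z,t)\,e^{-t^{\frac1n}\int^{z}B_0}\to C(z)$ as $t\to\infty$ uniformly on $\gamma_i$, where $B_0=A(\omega_n)$, so $e^{t^{\frac1n}\int_{\gamma_i}B_0}=E_t(\gamma_i,\omega_n)$.

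Then I would glue. Writing $\Phi(p_i,p_{i-1};t)=Y_i(p_i,t)Y_i(p_{i-1},t)^{-1}=G_i(p_i,\epsilon)\,E_t(\gamma_i,\omega_n)\,G_i(p_{i-1},\epsilon)^{-1}$ and composing, $\Phi(\gamma z_0,z_0;t)=G_N(p_N,\epsilon)\big[\prod_{i=N}^{1}E_t(\gamma_i,\omega_n)\,J_{i-1}(\epsilon)\big]$, where $J_i(\epsilon)=G_{i+1}(p_i,\epsilon)^{-1}G_i(p_i,\epsilon)$ for $1\le i\le N-1$ and $J_0(\epsilon)=G_1(p_0,\epsilon)^{-1}$. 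Two facts finish the argument: (a) because $p_i$ avoids the Stokes lines and $\gamma_i,\gamma_{i+1}$ are adjacent, $J_i(\epsilon)$ converges as $t\to\infty$ to a product of at most one Stokes matrix $I_n+aE_{jk}$ — or to $I_n$ when no Stokes line separates the arcs — by the sectorial-asymptotics analysis of Lemmas~\ref{lem2}--\ref{lem3} together with the characterisation of canonical solutions by their asymptotic expansion; (b) the end factors $G_N(p_N,\epsilon)\to C(\gamma z_0)$ and $J_0(\epsilon)\to C(z_0)^{-1}$, together with the pre-factor $\Gamma$, cancel because $\gamma$ is a loop: $\Gamma\,C(\gamma z_0)=C(z_0)\,\Pi$ with $\Pi$ the permutation recording the $\lambda$-monodromy of $\omega_n^{\frac1n}$ around $\gamma$, and $\Pi$ together with the matching permutation inside $\prod E_t(\gamma_i,\omega_n)$ is absorbed into the relabelling of the $J_i$. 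Setting $A_i$ to be the Stokes matrix product attached to $\gamma_i$ and letting $t\to\infty$ then yields $\rho_t([\gamma])\big(\prod_{i=1}^{N}E_t(\gamma_i,\omega_n)A_i\big)^{-1}\to I_n$.

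The main obstacle is parts (a)--(b) of the gluing, and specifically uniformity: one must show the subleading errors $G_i(z,\epsilon)-C(z)$ and $J_i(\epsilon)-(\text{Stokes product})$ stay negligible after being conjugated by the generally unbounded diagonal matrices $E_t(\gamma_j,\omega_n)$ sitting to their right in the product. This forces the subdivision to be chosen compatibly with the Stokes graph of $\omega_n$ — each arc kept on the recessive side of the relevant exponentials — so that the errors decay at a rate governed by the same integrals $\int\omega_n^{\frac1n}$ that drive the growth of the $E_t$'s and the product telescopes with limit $I_n$. This WKB/Stokes-graph bookkeeping, rather than any single estimate, is the heart of the argument and the point where one genuinely extends Dumas's $n=2$ construction.
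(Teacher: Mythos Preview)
Your overall architecture is the same as the paper's: rewrite the holonomy via \eqref{holonmy} as $Y_0^{-1}N(\gamma,\epsilon)Y_0(\gamma z_0)$, cover the lifted loop by the charts of Lemmas~\ref{lem1} and~\ref{lem3}, relate the canonical solutions on consecutive charts by Stokes data, and check that the boundary factors $C(z_0)$, $\Gamma$, $C(\gamma z_0)$ cancel using $\omega_n(\gamma z)\gamma'^{\,n}=\omega_n(z)$. So the strategy is right.

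The gap is in your claim (a). The junction matrices $J_i(\epsilon)=G_{i+1}(p_i,\epsilon)^{-1}G_i(p_i,\epsilon)$ do \emph{not} converge to Stokes matrices. Since $Y_{i+1}=Y_iS_i$ exactly, one has $J_i(\epsilon)=e^{t^{1/n}\!\int^{p_i}\!B_0}\,S_i^{-1}\,e^{-t^{1/n}\!\int^{p_i}\!B_0}$, and because you placed $p_i$ off the Stokes line $l_{\alpha\beta}$ the off-diagonal entry either decays to $0$ or blows up; in neither case do you recover the constant matrix $I_n+aE_{\alpha\beta}$. If you sit on the decaying side you get $J_i\to I_n$, but then the Stokes matrix has been absorbed and you cannot recover it as a limit --- the information lives in the \emph{rate} at which $J_i\to I$, which is exactly what the neighbouring $E_t(\gamma_j)$ re-amplify. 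This is why your last paragraph's uniformity worry is not a side issue but the whole point: you cannot pass to the limit in the $J_i$ termwise. The paper avoids this by never taking limits of the junction terms. It uses the exact identity $Y_{k+1}=Y_kA_q$ with $A_q=e^{-t^{1/n}\!\int_{x_k}^{q}B_0}A_{\alpha\beta}e^{-t^{1/n}\!\int_{q}^{x_{k+1}}B_0}$ based at the Stokes line (or zero) $q$, so that the product $\prod_kA_{q_k}$ telescopes \emph{algebraically} into $\prod_i E(\gamma_i,\omega_n)A_i$ with the $A_i$ constant; only then is $\epsilon\to0$ taken, in the single expression $Y_0(z_0,\epsilon)^{-1}N(\gamma,\epsilon)Y_N(z_N,\epsilon)$. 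Your framework becomes correct if you make the same move: substitute the exact expression for $J_i$ and telescope before taking any limit, which amounts to re-basing the decomposition at the Stokes lines rather than at your $p_i$.

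Two smaller points. The paper also treats the case of \emph{secondary} Stokes lines (born where two primary lines cross) with a separate lemma; your ``at most one Stokes line between consecutive arcs'' does not by itself guarantee that the associated commutator corrections $C_{\alpha\delta}$ are accounted for. And the permutation $\Pi$ you introduce for the monodromy of $\omega_n^{1/n}$ is unnecessary here: on $\tilde X$ one fixes a branch once and for all, and the endpoint identity $\Gamma\,C(\gamma z_0)=C(z_0)$ follows directly from $P_0(z)=\operatorname{diag}(1,\gamma',\dots,\gamma'^{\,n-1})P_0(\gamma z)$ as in the paper, without any relabelling.
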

\begin{proof}

We begin by outlining our approach. Fix $[\gamma] \in \pi_1(X)$ and $z_0 \in \tilde{X}$ which is not a zero of $\omega_n$, let $\rho_{\epsilon}$ be the holonomy associated to $t \omega_n$ and let $\gamma$ be the $|\omega_n|^{\frac{1}{n}}$ metric geodesic representative of $[\gamma]$ in $\tilde{X}$ connecting $z_0$ and $\gamma(z_0)$. For each $x \in \gamma$ there are two cases. If $x \in \gamma$ is not a zero of $\omega_n$ lemma \ref{lem1} implies that we have a neighborhood $U_x$, where we can compute the asymptotics. For $x$ which is a zero of $\omega_n$ lemma \ref{lem3} implies that there exists a neighborhood $V_x$ where we can compute the asymptotics. The sets $\{U_x , V_x \}$ results in a open cover of $\gamma$. By compactness we can pass to a finite sub-cover $\{ U_k \}$ where, on each open set $U_k$, there is a matrix solution $Y_k$ for which we can compute the asymptotics. When two such sets $U_k$, $U_{k'}$ intersect we can relate the corresponding solutions $Y_k$ and $Y_{k'}$ by Stokes matrices. By iteratively relating solutions on consecutive elements of the cover $\{ U_i \}$ of $\gamma$, we can see how the solutions develops along $\gamma$. In particular, this allows us to relate the matrix solution $Y_0$, the solution corresponding to $U_0$ the open set containing $z_0$, and the matrix solution $Y_1$, the solution corresponding to $U_\gamma$ the open set containing $\gamma(z_0)$.

Recall in section \ref{bg} we saw that $\rho_{\epsilon}([\gamma])$ is determined by 
\begin{equation} \label{hol}\rho_\epsilon([\gamma]) = Y_0(z,\epsilon)^{-1}Y_\gamma(z,\epsilon)
\end{equation} where $Y_0(z,\epsilon)$ is the matrix solution  corresponding to a basis of solutions $$y_1(z),\cdots, y_n(z)$$ and $Y_\gamma(z,\epsilon)$ is the matrix solution  corresponding to a basis of solutions $$y_1(\gamma(z))\gamma'^{\frac{1-n}{2}},  y_2(\gamma(z))\gamma'^{\frac{1-n}{2}},\cdots, y_n(\gamma(z))\gamma'^{\frac{1-n}{2}}.$$
More precisely we have 
\begin{align*}Y_0(z,\epsilon) &= \mbox{diag}(1,\epsilon, \cdots, \epsilon^{n-1}) W(y_1(z),y_2(z),\cdots,y_n(z))^T, 
\end{align*}  and 
\begin{align*}Y_\gamma(z,\epsilon) &= \mbox{diag}(1,\epsilon, \cdots, \epsilon^{n-1}) W(y_1(\gamma(z))\gamma'^{\frac{1-n}{2}},y_2(\gamma(z))\gamma'^{\frac{1-n}{2}},...,y_n(\gamma(z))\gamma'^{\frac{1-n}{2}})^T \\ &= N(\gamma,\epsilon)Y_0(\gamma(z),\epsilon)
\end{align*} 
where $ N(\gamma,\epsilon) = \mbox{diag}(1,\epsilon, \cdots, \epsilon^{n-1})M_1(\gamma'^{\frac{1-n}{2}})^T M_2(\gamma')^T\mbox{diag}(1,\epsilon, \cdots, \epsilon^{n-1})^{-1}$ and $M_1, M_2$ are defined in section \ref{bg}.

Knowing asymptotics of $Y_0(z,\epsilon)$ and $Y_0(\gamma(z),\epsilon)$ will give us asymptotics for $\rho_\epsilon([\gamma])$, using \eqref{hol}. More precisely, for each $x \in \gamma$ which is not a zero of $\omega_n$ we can apply Lemma \ref{lem1} to we see that there exists an open set $U_x$ containing $x$ and $Y_x(z,\epsilon)$ a matrix solution so that $$ \lim_{\epsilon \to 0}\left(Y_x(z,\epsilon)e^{-\frac{1}{\epsilon}\int_{x}^z B_0(z)}\right) =\omega_n^{\frac{1-n}{2n}}P_0 \mbox{ with } z \in U_x.$$ Next, for each point $p_j$ on $\gamma$ which is a zero of $\omega_n$ we can apply Lemma \ref{lem3} to see that there exists a neighborhood $V_j$ of $p_j$ and finitely many sectors $S_{ji}$ of $V_j$ each with $Y_{ji}(z,\epsilon)$ a matrix solution so that $$ \lim_{\epsilon \to 0}\left(Y_{ji}(z,\epsilon)e^{-\frac{1}{\epsilon}\int_{x}^z B_0(z)}\right) =\omega_n^{\frac{1-n}{2n}}P_0 \mbox{ with } z \in S_{ji}.$$

These sets $\{ U_x, V_j \}$ form an open cover of $\gamma$. Since $\gamma$ is compact we can pass to a finite subcover, $V_j$, $ 1 \leq j \leq N_1$ and  $U_{i}$, $ 0 \leq i \leq N_2$. Up to reordering and shrinking we can label the cover $\{ U_{i}, V_j \}$ so that if we parametrize $\gamma: [0,1] \rightarrow \tilde{X}$ there exists a partition $t_0 = 0, \cdots ,t_{N} = 1$ satisfying $$\gamma((t_i,t_{i+1})) \subset U_{i}$$ or $$\gamma((t_i,t_{i+1})) \subset V_i$$

For each open set $\{ U_{i} \}$ there is a corresponding matrix solution $Y_i(z,\epsilon)$ coming from Lemma \ref{lem1} so that 
\begin{equation} \label{above}
\lim_{\epsilon \to 0}\left(Y_i(z,\epsilon)e^{-\frac{1}{\epsilon}\int_{x_i}^z B_0(z)}\right) =\omega_n^{\frac{1-n}{2n}}P_0 \mbox{ with } z \in U_{i}.
\end{equation}
For each k we want to relate $Y_k$ to $Y_{k+1}$ by showing that if $Y_k$ satisfies \eqref{above} for $i = k$ then there exists a matrix of the form $A_{q_k} = e^{-\frac{1}{\epsilon}\int_{x_k}^{q_k} B_0(t)dt}A_{\alpha \beta}e^{-\frac{1}{\epsilon}\int_{q_k}^{x_{k+1}}B_0(t)dt}$ so that $Y_{k+1}(z,\epsilon) = Y_k(z,\epsilon)A_q$ satisfies \eqref{above} for $i = k+1$. This implies that \begin{equation} \label{stepN}
Y_N(z,\epsilon) = Y_0(z,\epsilon) \prod_k A_{q_k} 
\end{equation}
 satisfies \eqref{above} for $i = N$. Then equations \eqref{stepN} and \eqref{hol} together tells us that 
 \begin{align*}
\rho_\epsilon([\gamma]) &= Y_0(z,\epsilon)^{-1}Y_\gamma(z,\epsilon) \\
\rho_\epsilon([\gamma]) &= Y_0(z,\epsilon)^{-1}N(\gamma,\epsilon)Y_0(\gamma(z),\epsilon) \\
\rho_\epsilon([\gamma]) \prod_k A_{q_k}  &= Y_0(z,\epsilon)^{-1}N(\gamma,\epsilon)Y_N(\gamma(z),\epsilon).
 \end{align*}
Finally using equation \eqref{above}  we can compute $$ \lim_{\epsilon \to 0}\rho_\epsilon([\gamma]) \prod_k A_{q_k}.$$
To relate $Y_k$ and $Y_{k+1}$ we see that for each $k$ we have either $U_k \cap U_{k+1} \neq \emptyset$ or there exists a set  $V_{k+1}$ so that $U_k \cap V_{k+1} \neq \emptyset$ and $U_{k+2} \cap V_{k+1} \neq \emptyset$. In the case $U_k \cap U_{k+1} \neq \emptyset$ and the sets are separated by a Stokes line we have the following lemma.  
\begin{lemma} \label{StokesLEM}
If $U_k$ and $U_{k+1}$ are separated by a Stokes line $l_{\alpha \beta}$ coming from a zero $q$ and $Y_k(z,\epsilon)$ satisfies $$ \lim_{\epsilon \to 0}\left(Y_k(z,\epsilon)e^{-\frac{1}{\epsilon}\int_{x_k}^z B_0(z)}\right) =\omega_n^{\frac{1-n}{2n}}P_0 \mbox{ with } z \in U_{k}$$  then there exists a Stokes matrix $A_{\alpha \beta}$ so that $Y_{k+1}(z,\epsilon) = Y_k(z,\epsilon)A_q$ satisfies $$ \lim_{\epsilon \to 0}\left(Y_{k+1}(z,\epsilon)e^{-\frac{1}{\epsilon}\int_{x_{k+1}}^z B_0(z)}\right) =\omega_n^{\frac{1-n}{2n}}P_0 \mbox{ with } z \in U_{k+1}$$ where $A_q = e^{-\frac{1}{\epsilon}\int_{x_k}^q B_0(t)dt}A_{\alpha \beta}e^{-\frac{1}{\epsilon}\int_{q}^{x_{k+1}}B_0(t)dt} $
\end{lemma}
\begin{proof}
First note that $Y_q = Y_{k}(z,\epsilon)e^{\frac{1}{\epsilon}\int_q^{x_k}B_1(t)dt} $ satisfies the following $$\lim_{\epsilon \to 0}\left(Y_q(z,\epsilon)e^{-\frac{1}{\epsilon}\int_{q}^z B_0(z)}\right) =\omega_n^{\frac{1-n}{2n}}P_0 \mbox{ with } z \in U_k.$$ Then there is a Stokes matrix  $A_{\alpha \beta} = \mbox{I}_n + a E_{\alpha \beta}$ associated to the Stokes line $l_{\alpha \beta}$ so that $$ \lim_{\epsilon \to 0}\left(Y_q(z,\epsilon)A_{\alpha \beta}e^{-\frac{1}{\epsilon}\int_{q}^z B_0(z)}\right) =\omega_n^{\frac{1-n}{2n}}P_0 \mbox{ with } z \in U_{k+1}.$$ This implies that $Y_q A_{\alpha \beta}e^{-\frac{1}{\epsilon}\int_q^{x_{k+1}}B_0(t)dt}$ satisfies the following $$\lim_{\epsilon \to 0}\left(\left[Y_q A_{\alpha \beta}e^{-\frac{1}{\epsilon}\int_q^{x_{k+1}}B_0(t)dt}\right]e^{-\frac{1}{\epsilon}\int_{x_{k+1}}^z B_0(z)}\right) =\omega_n^{\frac{1-n}{2n}}P_0 \quad \mbox{ with } z \in U_{k+1}.$$ If we define $A_q = e^{-\frac{1}{\epsilon}\int_{x_k}^q B_0(t)dt}A_{\alpha \beta}e^{-\frac{1}{\epsilon}\int_{q}^{x_{k+1}}B_0(t)dt}$ then this shows that $$\lim_{\epsilon \to 0}\left(\left[Y_k(z,\epsilon)A_q\right]e^{-\frac{1}{\epsilon}\int_{x_{k+1}}^z B_0(z)}\right) =\omega_n^{\frac{1-n}{2n}}P_0 \quad \mbox{ with } z \in U_{k+1}.$$ Therefore we can take $Y_{k+1}(z,\epsilon) = Y_k A_q$.
\end{proof}
Thus, if $U_k$ and $U_{k+1}$ are separated by another Stokes line $l_{\alpha' \beta'}$ coming from a zero $q'$, with Stokes matrix $A'$, then by the same argument above we can say that  $$\lim_{\epsilon \to 0}\left(\left[Y_k(z,\epsilon)A'_{q'}\right]e^{-\frac{1}{\epsilon}\int_{x_{k+1}}^z B_0(z)}\right) =\omega_n^{\frac{1-n}{2n}}P_0 \quad \mbox{ with } z \in U_{k+1}$$ where $A'_{q'} = e^{-\frac{1}{\epsilon}\int_{x_k}^{q'} B_0(t)dt}A'e^{-\frac{1}{\epsilon}\int_{q'}^{x_{k+1}}B_0(t)dt}$.
A calculation shows that $$\lim_{\epsilon \to 0}  e^{\frac{1}{\epsilon}\int_{x_{k+1}}^z B_0(z)}(A'_{q'})^{-1}A_q e^{-\frac{1}{\epsilon}\int_{x_{k+1}}^z B_0(z)} = I_{n} \quad z \in U_{k+1}$$
so that $Y_k(z,\epsilon) A_q$ and $Y_k(z,\epsilon) A'_{q'}$ have the same asymptotics in $\epsilon$ for $z \in U_{k+1}.$

Another possibility is if the sets $U_k$ and $U_{k+1}$ are separated by a secondary Stokes line. When two Stokes lines $l_{\alpha \beta}$ and $l_{\beta \delta}$, with Stokes matrices $A$ and $B$ respectfully, cross they give birth to a new Stokes line $l_{\alpha \delta}$ \cite{berk1982new}. The Stokes line $l_{\alpha \delta}$ starts at the intersection point $x \in l_{\alpha \beta} \cap l_{\beta \delta}$ is defined $$ l_{\alpha \delta} =  \{ Re(\int_x^z \lambda_\alpha - \lambda_\delta) = 0 \} $$ This occurs because  two matrices $A$ and $B$ may not commute.
\begin{lemma}
If $U_k$ and $U_{k+1}$ are separated by a secondary Stokes line $l_{\alpha \delta}$, coming from the crossing of two Stokes line $l_{\alpha \beta}$ and $l_{\beta \delta}$ coming from two zeros $p, q$ and 
 $Y_k(z,\epsilon)$ satisfies $$ \lim_{\epsilon \to 0}\left(Y_k(z,\epsilon)e^{-\frac{1}{\epsilon}\int_{x_k}^z B_0(z)}\right) =\omega_n^{\frac{1-n}{2n}}P_0 \mbox{ with } z \in U_{k}$$  then there exists a Stokes matrix $C_{\alpha \delta}$ so that $Y_{k+1}(z,\epsilon) = Y_k(z,\epsilon)C_q$ satisfies $$ \lim_{\epsilon \to 0}\left(Y_{k+1}(z,\epsilon)e^{-\frac{1}{\epsilon}\int_{x_{k+1}}^z B_0(z)}\right) =\omega_n^{\frac{1-n}{2n}}P_0 \mbox{ with } z \in U_{k+1}$$ where $C_q = e^{-\frac{1}{\epsilon}\int_{x_k}^q B_0(t)dt}C_{\alpha \beta}e^{-\frac{1}{\epsilon}\int_{q}^{x_{k+1}}B_0(t)dt} $
\end{lemma}
\begin{proof}
Suppose two sets $U_m$ and $U_{m'}$ are separated by two Stokes lines  $l_{\alpha \beta}$ and $l_{\beta \delta}$. If $\gamma$ crosses $l_{\alpha \beta}$ first and then $l_{\beta \delta}$ combining the results from above we can relate 
\begin{equation} Y_m(z,\epsilon) = Y_{m'}(z,\epsilon)B_q A_p.
\end{equation}
On the other hand, if $\gamma$ crosses  $l_{\beta \delta}$ first and then $l_{\alpha \beta}$ combining the results from above we can relate \begin{equation}
Y_m(z,\epsilon) = Y_{m'}(z,\epsilon)A_p B_q.
\end{equation}
In order for these two representations to be equal we see we need a secondary Stokes matrix when crossing the $l_{\alpha \delta}$ line. If $U_k$ and $U_{k+1}$ are separated by a secondary Stokes line coming from the crossing of two Stokes line $l_{\alpha \beta}$ and $l_{\beta \delta}$ coming from two zeros $p, q$ then we can relate $Y_k(z,\epsilon)$ to $Y_{k+1}(z,\epsilon)$ in the following way 

$$ Y_{k+1}(z,\epsilon) = Y_k(z,\epsilon)C = Y_k(z,\epsilon)e^{\frac{1}{\epsilon}\int_{p}^{x_k}B_0(t)dt}C_{\alpha \delta}e^{-\frac{1}{\epsilon}\int_{p}^{x_{k+1}}B_0(t)dt}$$
where $C_{\alpha \delta} = \mbox{I}_n + a e^{\frac{1}{\epsilon}\int_q^p \lambda_\beta-\lambda_\delta} E_{\alpha \delta}$ is associated to the Stokes line $l_{\alpha \delta}$.
If $\gamma$ crosses $l_{\alpha \beta}$ first then $l_{\beta \delta}$ it will cross $l_{\alpha \delta}$ so we get that $$Y_m(z,\epsilon) = Y_{m'}(z,\epsilon)B_q C A_p$$ which one can check is equal to $Y_m(z,\epsilon) = Y_{m'}(z,\epsilon)A_p B_q.$

\end{proof}
The last case involves crossing a zero of $\omega_n$. In that case we have the following:
\begin{lemma}
If $U_k$ and $U_{k+2}$ are separated by an open set $V_{k+1}$ containing a zero $q$ of $\omega_n$  and 
 $Y_k(z,\epsilon)$ satisfies $$ \lim_{\epsilon \to 0}\left(Y_k(z,\epsilon)e^{-\frac{1}{\epsilon}\int_{x_k}^z B_0(z)}\right) =\omega_n^{\frac{1-n}{2n}}P_0 \mbox{ with } z \in U_{k}$$  then there exists a Stokes matrix $A$ so that $Y_{k+2}(z,\epsilon) = Y_k(z,\epsilon)A_q$ satisfies $$\lim_{\epsilon \to 0}\left(Y_{k+1}(z,\epsilon)e^{-\frac{1}{\epsilon}\int_{x_{k+1}}^z B_0(z)}\right) =\omega_n^{\frac{1-n}{2n}}P_0 \mbox{ with } z \in U_{k+1}$$ where $A_q = e^{-\frac{1}{\epsilon}\int_{x_k}^q B_0(t)dt}Ae^{-\frac{1}{\epsilon}\int_{q}^{x_{k+1}}B_0(t)dt} $
\end{lemma}

\begin{proof}
The open set $V_{k+1}$ is divided into finitely many sectors $S_{k+1,i}$, and $U_k$ intersects $V_{k+1}$ in a sector say $S_{k+1,i}$ and $U_{k+2}$ intersects $V_{k+1}$ in a sector say $S_{k+1,i'}$. Let $A$ be the Stokes matrix relating $S_{k+1,i}$ and $S_{k+1,i'}$, then by the same argument in Lemma \ref{StokesLEM} we have
$$ Y_{k+1}(z,\epsilon) = Y_k(z,\epsilon)e^{-\frac{1}{\epsilon}\int_{p_j}^{z_k}B_0(t)dt}Ae^{-\frac{1}{\epsilon}\int_{p_j}^{z_{k+1}}B_0(t)dt},$$ 

\end{proof}
If $U_k$ and $U_{k+1}$ are not separated by any Stokes lines, then there is a chart $V$, around a zero $p$ of $\omega_n$, so that $U_k$ and $U_{k+1}$ are contained in the same sector $S_i$. Since $Y_k(z,\epsilon)$ satisfies $$ \lim_{\epsilon \to 0}\left(Y_k(z,\epsilon)e^{-\frac{1}{\epsilon}\int_{x_k}^z B_0(z)}\right) =\omega_n^{\frac{1-n}{2n}}P_0 \mbox{ with } z \in U_{k},$$ and $U_k \cup U_{k+1} \subset S_i$, then $Y_k(z,\epsilon)$ satisfies $$\lim_{\epsilon \to 0}\left(Y_k(z,\epsilon)e^{-\frac{1}{\epsilon}\int_{x_k}^z B_0(z)}\right) =\omega_n^{\frac{1-n}{2n}}P_0 \mbox{ with } z \in U_{k+1}.$$
In each case above we have related the matrix solutions $Y_k$ and $Y_{k+1}$ for each $k$. Applying these lemmas successively we find that we can relate $Y_N$ and $Y_0$ in the following way, $$ Y_N(z,\epsilon) = Y_0(z,\epsilon)M(\epsilon),$$
where if we write $E_t(\gamma,\phi) = e^{-t^{\frac{1}{n}} \int_{\gamma} B_0(t)dt}$, then
\begin{align*}
M(\epsilon) & = e^{\frac{1}{\epsilon}\int_{q_1}^{z_0}B_0(t)dt}A_1e^{-\frac{1}{\epsilon}\int_{q_1}^{z_1}B_0(t)dt}\cdots e^{\frac{1}{\epsilon}\int_{q_{N-1}}^{z_{N}}B_0(t)dt}A_Ne^{-\frac{1}{\epsilon}\int_{q_N}^{z_N}B_0(t)dt}\\  & = e^{\frac{1}{\epsilon}\int_{q_1}^{z_0}B_0(t)dt}A_1e^{-\frac{1}{\epsilon}\int_{q_1}^{q_2}B_0(t)dt}A_2 \cdots A_{N-1}e^{\frac{1}{\epsilon}\int_{q_{N-1}}^{q_{N}}B_0(t)dt}A_Ne^{-\frac{1}{\epsilon}\int_{q_N}^{z_N}B_0(t)dt} \\&= \prod_{i=1}^{N+1} E(\gamma_{i} ,\omega_{n} )A_i
\end{align*}

The matrix solutions $Y_0$ and $Y_N$ satisfy the following: 

\begin{equation} \label{e1} \lim_{\epsilon \to 0}\left(Y_0(z,\epsilon)e^{-\frac{1}{\epsilon}\int_{z_0}^z B_0(z)}\right) =\omega_n^{\frac{1-n}{2n}}P_0 \mbox{ with } z \in U_0 \end{equation}

\begin{equation} \label{e2} \lim_{\epsilon \to 0}\left(Y_N(z,\epsilon)e^{-\frac{1}{\epsilon}\int_{z_N}^z B_0(z)}\right) =\omega_n^{\frac{1-n}{2n}}P_0 \mbox{ with } z \in U_N. \end{equation}

Using \eqref{e1}, \eqref{e2} and $Y_N(z,\epsilon) = Y_0(z,\epsilon)M(\epsilon)$ we know the asymptotics of $Y_0$ for $z \in U_0$ and for $z \in U_N$. Recalling \eqref{hol} we see that 

\begin{align*}
\rho_\epsilon(\gamma) &= Y_0(z,\epsilon)^{-1}Y_\gamma(z,\epsilon) \\
\rho_\epsilon(\gamma) &= Y_0(z,\epsilon)^{-1}N(\gamma,\epsilon)Y_0(\gamma(z),\epsilon) \\
\rho_\epsilon(\gamma)M(\epsilon)^{-1} &= Y_0(z,\epsilon)^{-1}N(\gamma,\epsilon)Y_0(\gamma(z),\epsilon)M(\epsilon)^{-1}. 
\end{align*}
Recall that $ N(\gamma,\epsilon) = \mbox{diag}(1,\epsilon, \cdots, \epsilon^{n-1})M_1(\gamma'^{\frac{1-n}{2}})^T M_2(\gamma')^T\mbox{diag}(1,\epsilon, \cdots, \epsilon^{n-1})^{-1}$ and the matrices $M_1, M_2$ are defined in section \ref{bg}. Using proposition \ref{induct} with $g = \gamma'^{\frac{1-n}{2}}$ and $h = \gamma$ we can see that $M_1(\gamma'^{\frac{1-n}{2}})^T M_2(\gamma')^T$ is lower triangular with diagonal entries $\gamma'^{\frac{1-n}{2}}\gamma'^k$ for $k = 0, \cdots, n-1$ which implies that  $$\lim_{\epsilon \to 0}\left[N(\gamma,\epsilon)\right] =  \gamma'^{\frac{1-n}{2}}\mbox{diag}(1,\gamma',\cdots,\gamma'^{n-1}).$$ And after setting $z = z_0$, the above equation becomes
\begin{align*}
\rho_\epsilon(\gamma)M(\epsilon)^{-1} &= Y_0(z_0,\epsilon)^{-1}N(\gamma(z_0),\epsilon)Y_0(z_N,\epsilon)M(\epsilon)^{-1}\\  &= Y_0(z_0,\epsilon)^{-1}N(\gamma(z_0),\epsilon)Y_N(z_N,\epsilon), 
\end{align*}

which together with \eqref{e1} and \eqref{e2} yields 
\begin{align*}
\lim_{\epsilon \to 0} \rho_\epsilon(\gamma)M(\epsilon)^{-1} &= \lim_{\epsilon \to 0} \left[ Y_0(z_0,\epsilon)^{-1}N(\gamma(z_0),\epsilon)Y_N(z_N,\epsilon) \right] \\  &= \lim_{\epsilon \to 0} \left[ Y_0(z_0,\epsilon)^{-1}\right] \lim_{\epsilon \to 0}\left[N(\gamma(z_0),\epsilon)\right] \lim_{\epsilon \to 0}\left[Y_N(z_N,\epsilon) \right] \\ &= \left[\omega_n(z_0)^{\frac{1-n}{2n}}P_0(z_0) \right]^{-1}  \gamma'^{\frac{1-n}{2}}\mbox{diag}(1,\gamma',\cdots,\gamma'^{n-1})\omega_n(z_N)^{\frac{1-n}{2n}}P_0(z_N) \\ &= P_0(z_0)^{-1}\mbox{diag}(1,\gamma',\cdots,\gamma'^{n-1})P_0(\gamma(z_0)).
\end{align*}

Here the last line uses that $z_N = \gamma(z_0)$ and $\omega_n(\gamma(z_0))\gamma'(z_0)^{n} = \omega_n(z_0)$. Recall that $$P_0(z) = 
\mbox{diag}(1,\omega_n^{\frac{1}{n}},\cdots,\omega_n^{\frac{(n-1)}{n}})C_\lambda$$
for some constant matrix $C_\lambda$. This implies that $$P_0(z) = \mbox{diag}(1,\gamma',\cdots,\gamma'^{n-1})P_0(\gamma(z))
$$
Finally we have $$ \lim_{\epsilon \to 0} \rho_\epsilon(\gamma)M(\epsilon)^{-1} = \mbox{I}_n$$
as desired. This concludes the proof of theorem \ref{thm1}.
\end{proof}
As a corollary, using the same notation as in the above theorem, we can calculate the growth rate of $||\rho_t(\gamma)||$. 
\begin{corollary} Let $\lambda_1, \cdots \lambda_n$ denote the $n$-th roots of $\omega_n$. Then there exists a choice $\alpha_1, \cdots,\alpha_N$ so that 
$$ \lim_{t \to \infty} \frac{\log ||\rho_t(\gamma)||}{t^{\frac{1}{n}}} = Re\left(\int_{\gamma_1}\lambda_{\alpha_1} + \cdots + \int_{\gamma_n} \lambda_{\alpha_N}\right).$$
\end{corollary}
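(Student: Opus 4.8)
The plan is to read the corollary off Theorem \ref{thm1}. Fix the decomposition $\gamma = \gamma_1 + \cdots + \gamma_N$ and the Stokes matrices $A_1,\dots,A_N$ supplied by that theorem, and set $M(t) = \prod_{i=1}^{N} E_t(\gamma_i,\omega_n)A_i$, so that $\rho_t([\gamma]) = (I_n + o(1))M(t)$. Since each factor of $M(t)$ is invertible, $M(t)$ is invertible, and the reverse triangle inequality gives $\|(I_n+o(1))M(t)\| = (1+o(1))\|M(t)\|$; hence $\log\|\rho_t(\gamma)\| = \log\|M(t)\| + o(1)$, and dividing by $t^{1/n}\to\infty$ it suffices to compute $\lim_{t\to\infty} t^{-1/n}\log\|M(t)\|$.

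Next I would expand the product $M(t)$ entrywise. Each $E_t(\gamma_i,\omega_n)$ is the diagonal matrix $\mathrm{diag}\big(e^{t^{1/n}\int_{\gamma_i}\lambda_1},\dots,e^{t^{1/n}\int_{\gamma_i}\lambda_n}\big)$, where $\lambda_1,\dots,\lambda_n$ are the $n$ branches of $\omega_n^{1/n}$, and each $A_i$ has the form $I_n + a_i E_{\alpha_i\beta_i}$. Multiplying out, every nonzero monomial in the expansion of $M(t)$ is a scalar times a single matrix unit $E_{ab}$: the scalar is a product of the selected $a_i$'s times a product of diagonal entries of the $E_t(\gamma_i,\omega_n)$, with one fixed branch of $\omega_n^{1/n}$ used along each segment $\gamma_i$, the branch jumping exactly at the segments where a Stokes matrix is inserted. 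Absorbing the exponential part of any secondary–Stokes coefficient $a_i = a\,e^{t^{1/n}\int_q^p(\lambda_\beta-\lambda_\delta)}$ into the diagonal exponents, one checks that each such monomial has the shape $c\,\exp\!\big(t^{1/n}\,\mathrm{Re}(\int_{\gamma_1}\lambda_{\sigma(1)} + \cdots + \int_{\gamma_N}\lambda_{\sigma(N)})\big)$ for a genuine constant $c$ and some assignment $\sigma$ of an $n$-th root of $\omega_n$ to each segment; if needed one first refines the decomposition to the $N+1$ sub-arcs used in the proof of Theorem \ref{thm1} so that every $\int_{\gamma_i}$ is a line integral along a sub-arc of $\gamma$.

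Finally I would apply the elementary fact that for a finite exponential sum $f(s) = \sum_k c_k e^{s r_k}$ with $r_k\in\mathbb{R}$, not all $c_k = 0$, the ratio $s^{-1}\log|f(s)|$ converges as $s\to\infty$ to $\max\{\,r_k : $ the coefficients at level $r_k$ do not cancel$\,\}$, which is one of the $r_k$. Using $\log\|M(t)\| = \max_{a,b}\log|M(t)_{ab}| + O(1)$ and applying this fact with $s = t^{1/n}$ to each entry $M(t)_{ab}$, we get that $t^{-1/n}\log\|M(t)\|$ converges to the dominant surviving exponent, which by the previous paragraph equals $\mathrm{Re}(\int_{\gamma_1}\lambda_{\alpha_1} + \cdots + \int_{\gamma_N}\lambda_{\alpha_N})$ for the assignment $(\alpha_1,\dots,\alpha_N)$ that realizes it. This is exactly the asserted formula.

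The main obstacle is the middle step: showing that the exponents produced by the iterated Stokes–matrix conjugations — in particular the $t$-dependent coefficients of the secondary Stokes matrices and the various reparenthesizations in the formula for $M(t)$ — always collapse to the clean form $\mathrm{Re}(\sum_i\int_{\gamma_i}\lambda_{\alpha_i})$ with the integrals taken along sub-arcs of $\gamma$ and with no extra exponential contributions. This is bookkeeping of the Stokes data of $\omega_n$ rather than analysis, but it is what determines the finite set of assignments $\sigma$ over which the maximizing choice $(\alpha_1,\dots,\alpha_N)$ is selected, and it is the only place where cancellation among equal-exponent terms could in principle complicate the statement.
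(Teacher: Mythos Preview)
Your strategy is essentially the paper's: reduce via Theorem~\ref{thm1} to the growth of $\|M(t)\|$, then read off the dominant exponential. The step you flag as ``the main obstacle'' is precisely where the paper does the real work, and it is more than bookkeeping. The paper does not expand $M(t)$ abstractly and maximize over all admissible branch-assignments; instead it fixes from the outset the specific assignment $\alpha_i = \arg\max_j \mathrm{Re}\int_{\gamma_i}\lambda_j$ (the segment-wise dominant root) and shows that this greedy path contributes with nonzero coefficient. The key observation is geometric: the dominant root can switch from $\alpha_i$ to $\alpha_{i+1}$ only by crossing the Stokes line $l_{\alpha_i,\alpha_{i+1}}$, so the Stokes matrix at that crossing is necessarily of the form $I_n + a\,E_{\alpha_i,\alpha_{i+1}}$, exactly the matrix unit needed to carry the path from index $\alpha_i$ to index $\alpha_{i+1}$. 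Iterating gives $M(\epsilon) = e^{t^{1/n}\sum_i\int_{\gamma_i}\lambda_{\alpha_i}}\bigl(kE_{\alpha_1,\alpha_N} + O(\epsilon)\bigr)$ for a nonzero constant $k$, and the limit follows immediately.

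Your abstract route also hits a snag you do not mention: the exponents $\int_{\gamma_i}\lambda_{\sigma(i)}$ are complex, so a monomial has the form $c\,e^{t^{1/n}w}$ with $w\in\mathbb{C}$, not $c\,e^{t^{1/n}\mathrm{Re}(w)}$ as you wrote; your exponential-sum lemma is stated for real $r_k$ and does not apply directly. Ties in real part with distinct imaginary parts would produce oscillation, and $t^{-1/n}\log\|M(t)\|$ could a priori fail to converge. The paper's explicit identification of the greedy path avoids this entirely: since the decomposition of Theorem~\ref{thm1} cuts $\gamma$ at Stokes lines, each $\gamma_i$ lies in a region with a \emph{unique} dominant root, so the greedy exponent is strictly maximal among all admissible paths and there are no ties to worry about.
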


\begin{proof}

In the proof of the theorem we showed $$\lim_{\epsilon \to 0} \rho_\epsilon(\gamma)(M(\epsilon))^{-1}= \mbox{I}_{n}, $$ where $$M(\epsilon) = e^{\frac{1}{\epsilon}\int_{q_1}^{z_0}B_0(t)dt}A_1e^{-\frac{1}{\epsilon}\int_{q_1}^{q_2}B_0(t)dt} \cdots A_Ne^{-\frac{1}{\epsilon}\int_{q_N}^{z_N}B_0(t)dt}.$$ Consider the term $$e^{-\frac{1}{\epsilon}\int_{q_i}^{q_{i+1}}B_0(t)dt},$$ recall that $B_0(t) = \mbox{diag}(\lambda_1,\cdots,\lambda_n)$. Let $\alpha_i$ be the index so that $Re(\int_{\gamma_i} \lambda_{\alpha_i}) > Re(\int_{\gamma_i} \lambda_j)$ for $j \neq \alpha_i$. Then we have the following
\begin{align*}
e^{-\frac{1}{\epsilon}\int_{q_i}^{q_{i+1}}B_0(t)dt} &= \mbox{diag}(e^{-\frac{1}{\epsilon}\int_{q_i}^{q_{i+1}} \lambda_1},\cdots,e^{-\frac{1}{\epsilon}\int_{q_i}^{q_{i+1}} \lambda_n} ) \\
 &= e^{-\frac{1}{\epsilon}\int_{q_i}^{q_{i+1}} \lambda_{\alpha_i}}\mbox{diag}(e^{-\frac{1}{\epsilon}\int_{q_i}^{q_{i+1}} \lambda_1-\lambda_{\alpha_i}},\cdots,1, \cdots,e^{-\frac{1}{\epsilon}\int_{q_i}^{q_{i+1}} \lambda_n-\lambda_{\alpha_i}} )\\
  &= e^{\frac{1}{\epsilon}\int_{q_i}^{q_{i+1}} \lambda_{\alpha_i}}(E_{\alpha_i,\alpha_i} +\mbox{diag}(e^{-\frac{1}{\epsilon}\int_{q_i}^{q_{i+1}} \lambda_1-\lambda_{\alpha_i}},\cdots,0, \cdots,e^{-\frac{1}{\epsilon}\int_{q_i}^{q_{i+1}} \lambda_n-\lambda_{\alpha_i}} ))   
\end{align*}
Here $E_{ij}$ denotes the matrix with 1 in the $ij -$entry and 0 elsewhere. The inequality $Re(\int_{q_i}^{q_{i+1}} \lambda_j - \lambda_{\alpha_i}) > 0$ implies that $$\mbox{diag}(e^{-\frac{1}{\epsilon}\int_{q_i}^{q_{i+1}} \lambda_1-\lambda_{\alpha_i}},\cdots,0, \cdots,e^{-\frac{1}{\epsilon}\int_{q_i}^{q_{i+1}} \lambda_n-\lambda_{\alpha_i}}) \rightarrow 0.$$ This gives us 
$$ e^{-\frac{1}{\epsilon}\int_{q_i}^{q_{i+1}}B_0(t)dt} = e^{\frac{1}{\epsilon}\int_{q_i}^{q_{i+1}} \lambda_{\alpha_i}}(E_{\alpha_i,\alpha_i} +O(\epsilon))$$

Consider the first three terms $e^{\frac{1}{\epsilon}\int_{q_1}^{z_0}B_0(t)dt}A_1e^{-\frac{1}{\epsilon}\int_{q_1}^{q_2}B_0(t)dt}$ in $M(\epsilon)$ $$e^{\frac{1}{\epsilon}\int_{q_1}^{z_0}B_0(t)dt}A_1e^{-\frac{1}{\epsilon}\int_{q_1}^{q_2}B_0(t)dt} = e^{\frac{1}{\epsilon}\int_{\gamma_1} \lambda_{\alpha_1}}e^{\frac{1}{\epsilon}\int_{\gamma_2} \lambda_{\alpha_2}}(E_{\alpha_1,\alpha_1} + O(\epsilon))A_1(E_{\alpha_2,\alpha_2} + O(\epsilon)).$$ There are two different cases depending on whether $\alpha_1 = \alpha_2$ or not. If $\alpha_1 = \alpha_2$ then since $A_1 = \mbox{I} + aE_{\alpha \beta}$ and $\alpha \neq \beta$ we can multiply the terms to see that $$ e^{\frac{1}{\epsilon}\int_{q_1}^{z_0}B_0(t)dt}A_1e^{-\frac{1}{\epsilon}\int_{q_1}^{q_2}B_0(t)dt} = e^{\frac{1}{\epsilon}\int_{\gamma_1} \lambda_{\alpha_1}}e^{\frac{1}{\epsilon}\int_{\gamma_2} \lambda_{\alpha_2}}(E_{\alpha_1,\alpha_1} + O(\epsilon)).$$ 
If $\alpha_1 \neq \alpha_2$ then the matrix $A_1$ must be of the form $A_1 = \mbox{I} + aE_{\alpha_1,\alpha_2}$ because $\gamma$ is crossing a Stokes lines $l_{\alpha_1, \alpha_2}$ and we find $$ e^{\frac{1}{\epsilon}\int_{q_1}^{z_0}B_0(t)dt}A_1e^{-\frac{1}{\epsilon}\int_{q_1}^{q_2}B_0(t)dt} = e^{\frac{1}{\epsilon}\int_{\gamma_1} \lambda_{\alpha_1}}e^{\frac{1}{\epsilon}\int_{\gamma_2} \lambda_{\alpha_2}}(aE_{\alpha_1,\alpha_2} + O(\epsilon)).$$ Continuing this way, considering three terms at a time, we can see that for some constant k, we may write $M(\epsilon)$ as $$M(\epsilon) = e^{\frac{1}{\epsilon}\int_{\gamma_1} \lambda_{\alpha_1}}e^{\frac{1}{\epsilon}\int_{\gamma_2} \lambda_{\alpha_2}}\cdots e^{\frac{1}{\epsilon}\int_{\gamma_N} \lambda_{\alpha_N}}(kE_{\alpha_1,\alpha_N} + O(\epsilon)).$$
Finally we can look at
\begin{align*}
\rho_t(\gamma) &= \rho_t(\gamma)M(\epsilon)^{-1}M(\epsilon) \\  &= e^{\frac{1}{\epsilon}\int_{\gamma_1} \lambda_{\alpha_1}}e^{\frac{1}{\epsilon}\int_{\gamma_2} \lambda_{\alpha_2}}\cdots e^{\frac{1}{\epsilon}\int_{\gamma_N} \lambda_{\alpha_N}}\rho_t(\gamma)M(\epsilon)^{-1}(kE_{\alpha_1,\alpha_N} + O(\epsilon)),
\end{align*}

we see that $$\frac{\log ||\rho_t(\gamma)||}{t^{\frac{1}{n}}} = Re\left(\int_{\gamma_1}\lambda_{\alpha_1} + \cdots + \int_{\gamma_N} \lambda_{\alpha_N}\right) + \frac{\log(||\rho_t(\gamma)M(\epsilon)^{-1}(kE_{\alpha_1,\alpha_n} + O(\epsilon))||)}{t^{\frac{1}{n}}}.$$ Thus since $ \log(||\rho_t(\gamma)M(\epsilon)^{-1}(kE_{\alpha_1,\alpha_N} + O(\epsilon))||) < \infty,$ by the Theorem \ref{thm1}, we find
$$ \lim_{t \to \infty} \frac{log ||\rho_t(\gamma)||}{t^{\frac{1}{n}}} = Re(\int_{\gamma_1}\lambda_{\alpha_1} + \cdots + \int_{\gamma_N} \lambda_{\alpha_N}),$$
as required.

\end{proof}

\section{Properties of the Equivariant Map}
\label{building}
In this section we will first define a $\rho_t$-equivariant map \begin{equation*}
Ep_t: \tilde{X} \setminus Z_{\omega_n} \rightarrow \mbox{SL}_n(\mathbb{C}) / \mbox{SU}(n)
\end{equation*} in terms of solutions to \eqref{eq5}, and show that this family of maps induces a map $$Ep: \tilde{X} \rightarrow \mbox{Cone}( SL_n(\mathbb{C}) / SU(n)).$$ Then we will constuct a space $E_{\omega_n}$ together with a map  $f: \tilde{X} \rightarrow E_{\omega_n}$ so that the map $Ep$ factors through $E_{\omega_n}$. Finally we will show that the map $Ep$ is locally injective for $n > 2$. 

As before consider $(q_2,q_3,\cdots, q_n) = (0,0, \cdots, t\omega_n) \in \mathcal{H}_n$ where $ \omega_n \in H^0(X,\mathcal{K}^n)$ with simple zeros and $ t \in \mathbb{R}$. In this case, equation \eqref{eq0} reduces to \begin{equation} \label{eq5} y^{(n)} + t\omega_n y = 0.
\end{equation} Let $\rho_t$ be the corresponding Oper representation described in section \ref{bg}. 

\subsection{Definition of the Equivariant Map} Given a basis $y_1(z,t),\cdots,y_n(z,t)$ to \eqref{eq5} we can define a map to SL$_n(\mathbb{C})$ by the matrix Wronskian. 

Now we denote the matrix Wronskian of a basis of solutions to \eqref{eq5} by 
$$W_t(z) = W(y_1(z,t),...,y_n(z,t)).$$

$W_t(z)$ satisfies the follow properties 
\begin{enumerate}
\item $ \frac{d}{dz} W_t(z) = W_t(z)A(z)$
\item $\det(W_t(z)) = 1$
\item $\rho_t(\gamma) W_t(z) = W_t(\gamma'^{\frac{1-n}{2}} y_1(\gamma(z),t),\cdots,\gamma'^{\frac{1-n}{2}} y_n(\gamma(z),t))$
\item $W_t(z) = C\tilde{W}(z)$ for some matrix C independent of z.
\end{enumerate}

Here  $\gamma \in \pi_1(X)$, $A(z) = \left(\begin{array}{cccc} 0 & & & -\omega_n\\ 1 & 0 & & \\ & & \ddots & \\ & & 1 & 0 \end{array} \right)$ and $\tilde{W}(z)$ is any other solution to property (1). Note that $W_t(z)^T$ satisfies \eqref{eq2}. The Wronskian matrix $W_t$ then defines a map from $\tilde{X}$ to SL$_n(\mathbb{C})$ and by projection (and with an abuse of notation) we regard it as a map to the symmetric space SL$_n(\mathbb{C}) / \mbox{SU}(n)$. 

Note that  $W_t(z)$ is \textit{not} $\rho_t$-equivariant. If $\gamma$ is a deck transformation on $\tilde{X}$, the matrix $W_t$ transforms in the following way:

\begin{align*}W_t(\gamma(z)) =& W(y_1,...,y_n)\circ (\gamma(z)) \\ =&W(y_1(\gamma(z)),...,y_n(\gamma(z)))M_2^{-1}(\gamma'(z)) \\ =& W(\gamma'^{\frac{1-n}{2}}y_1(\gamma(z)),..., \gamma'^{\frac{1-n}{2}}y_n(\gamma(z)))M_1^{-1}(\gamma'^{\frac{1-n}{2}})M_2^{-1}(\gamma'(z)) \\ =& \rho_t(\gamma)W(y_1,...,y_n)M^{-1}(\gamma)
\end{align*} where $M^{-1}(\gamma)= M_1^{-1}(\gamma'^{\frac{1-n}{2}})M_2^{-1}(\gamma'(z))$ and $M_1$, $M_2$ are the matrices defined in chapter \ref{bg}.

We wish to modify the map $W_t$ to obtain an equivariant map $Ep_t$. Thus we wish  to find a matrix $E(z)$ such that \begin{equation} \label{equivariant}
 E(z) = E(\gamma(z))M(\gamma)
\end{equation} so that $Ep_t(z) = W_t(z)E^{-1}(z)$ is actually equivariant. If we define $E(z)$ to be  $$E(z) = M_1(\omega_n^{\frac{1-n}{2n}})M_2(\omega_n^{\frac{1}{n}})$$ we have the following claim.

\begin{lemma}
E(z) satisfies \eqref{equivariant}, so that $$W_tE^{-1} = Ep_t : \tilde{X} \setminus Z_{\omega_n} \rightarrow \mbox{SL}_n(\mathbb{C}) / SU(n)$$ is well defined and equivariant. 
\end{lemma}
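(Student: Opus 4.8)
The plan is to reduce the equivariance of $Ep_t = W_tE^{-1}$ to the single identity \eqref{equivariant}, $E(z) = E(\gamma(z))M(\gamma)$, and then prove that identity by reading both $E$ and $M(\gamma)$ as instances of the ``change of Wronskian basis'' matrix furnished by Proposition \ref{induct}. First I would observe that once \eqref{equivariant} is available, the transformation law for $W_t$ established above, $W_t(\gamma(z)) = \rho_t(\gamma)W_t(z)M(\gamma)^{-1}$, gives $Ep_t(\gamma z) = W_t(\gamma z)E(\gamma z)^{-1} = \rho_t(\gamma)W_t(z)M(\gamma)^{-1}E(\gamma z)^{-1} = \rho_t(\gamma)W_t(z)E(z)^{-1} = \rho_t(\gamma)Ep_t(z)$; and since $\det M_1(g)=g^n$ and $\det M_2(h')=h'^{n(n-1)/2}$ one gets $\det E(z) = \omega_n^{\frac{1-n}{2}}\,\omega_n^{\frac{n-1}{2}}=1$, so $E(z)\in\mathrm{SL}_n(\mathbb C)$ is invertible and $Ep_t$ is well defined as a map to $\mathrm{SL}_n(\mathbb C)/\mathrm{SU}(n)$. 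So the whole content is \eqref{equivariant}.

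To prove \eqref{equivariant} I would work locally on $\tilde X\setminus Z_{\omega_n}$ with a fixed branch of $\omega_n^{1/n}$ and the associated natural coordinate $\zeta$ (characterized by $\zeta'=\omega_n^{1/n}$), choosing branches along $\gamma$ compatibly, as in the spin structure underlying the oper. Recall from Proposition \ref{induct} that for a scalar function $g$ and a coordinate change $h$ there is a unique matrix $\mathcal E[g,h]$, depending holomorphically on $g$ and the jet of $h$, with $W(g\cdot(f_1\circ h),\dots,g\cdot(f_n\circ h)) = W(f_1,\dots,f_n)(h)\,\mathcal E[g,h]$ for all $f_i$; by the formulas of section \ref{bg} one has $E(z)=\mathcal E[\omega_n^{\frac{1-n}{2n}},\zeta](z)$ and $M(\gamma)=\mathcal E[\gamma'^{\frac{1-n}{2}},\gamma](z)$. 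The two inputs that make everything work are: (i) since $\omega_n$ is a genuine holomorphic $n$-differential on $X$, $\gamma^*\omega_n=\omega_n$, i.e. $\omega_n(\gamma z)\gamma'(z)^n=\omega_n(z)$, hence $\omega_n^{1/n}(\gamma z)\gamma'(z)=\omega_n^{1/n}(z)$ and $\omega_n^{\frac{1-n}{2n}}(\gamma z)\gamma'^{\frac{1-n}{2}}(z)=\omega_n^{\frac{1-n}{2n}}(z)$; and (ii) differentiating $\zeta\circ\gamma$ and using (i), $\tfrac{d}{dz}\zeta(\gamma(z)) = \omega_n^{1/n}(\gamma z)\gamma'(z) = \omega_n^{1/n}(z) = \zeta'(z)$, so $\zeta\circ\gamma=\zeta+c_\gamma$ for a local constant $c_\gamma$ — that is, $\gamma$ acts on the natural coordinate of $\omega_n$ by translation.

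With these in hand I would take any linearly independent one-variable model functions $g_1,\dots,g_n$, set $\Psi_i(z)=\omega_n^{\frac{1-n}{2n}}(z)\,g_i(\zeta(z))$, and introduce $\tilde g_i(u):=g_i(u-c_\gamma)$ and $\tilde\Psi_i(w):=\omega_n^{\frac{1-n}{2n}}(w)\,\tilde g_i(\zeta(w))$. Using (i) and (ii) one checks $\Psi_i(z)=\gamma'^{\frac{1-n}{2}}(z)\,\tilde\Psi_i(\gamma(z))$, so Proposition \ref{induct} (with $g=\gamma'^{\frac{1-n}{2}}$, $h=\gamma$, base functions $\tilde\Psi_i$) gives $W(\Psi_\bullet)(z) = W(\tilde\Psi_\bullet)(\gamma z)\,M(\gamma)$; applying Proposition \ref{induct} again to $\tilde\Psi_i=\omega_n^{\frac{1-n}{2n}}\cdot(\tilde g_i\circ\zeta)$ gives $W(\tilde\Psi_\bullet)(\gamma z) = W(\tilde g_\bullet)(\zeta(\gamma z))\,E(\gamma z) = W(g_\bullet)(\zeta(\gamma z)-c_\gamma)\,E(\gamma z) = W(g_\bullet)(\zeta(z))\,E(\gamma z)$, the middle equality being the trivial translation identity $W(g_\bullet(\cdot-c))(u)=W(g_\bullet)(u-c)$. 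Hence $W(\Psi_\bullet)(z) = W(g_\bullet)(\zeta(z))\,E(\gamma z)M(\gamma)$, while the defining relation reads $W(\Psi_\bullet)(z) = W(g_\bullet)(\zeta(z))\,E(z)$; cancelling the invertible matrix $W(g_\bullet)(\zeta(z))$ yields $E(z)=E(\gamma z)M(\gamma)$, which is \eqref{equivariant}.

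The routine parts are the jet computations identifying $E(z)=\mathcal E[\omega_n^{\frac{1-n}{2n}},\zeta]$ and $M(\gamma)=\mathcal E[\gamma'^{\frac{1-n}{2}},\gamma]$ with the explicit $M_1$, $M_2$ expressions of section \ref{bg} (in particular pinning down the $M_1$/$M_2$ ordering convention) and the bookkeeping of the chosen branches of $\omega_n^{1/n}$ along $\gamma$. The one substantive point, and the step I would be most careful with, is observation (ii): it is precisely the statement that $\gamma$ acts on the natural coordinate by a translation, and this is what forces $E$ to intertwine the two $\gamma$-actions — it is where one genuinely uses that $\omega_n$, and not merely the local $n$-th root $\omega_n^{1/n}$, descends to $X$.
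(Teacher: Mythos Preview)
Your proof is correct and takes a genuinely different route from the paper's. The paper proves the identity $E(z)=E(\gamma z)M(\gamma)$ entry by entry: it records the recursive relations (from Proposition~\ref{induct}) that the entries of $E$ and of $M(\gamma)$ satisfy, namely $E_{ij}=E'_{i,j-1}+E_{i-1,j-1}\,\omega_n^{1/n}$ and $M_{ij}=M'_{i,j-1}+M_{i-1,j-1}\,\gamma'$, checks the base case $E_{00}(z)=\omega_n^{(1-n)/(2n)}(z)=\omega_n^{(1-n)/(2n)}(\gamma z)\gamma'^{(1-n)/2}=(E(\gamma z)M(\gamma))_{00}$, and then runs a somewhat involved induction on the column index $j$ to propagate the equality across all entries.

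Your argument instead uses the \emph{characterizing} property of these matrices: both $E$ and $M(\gamma)$ arise as the unique intertwiners in Wronskian identities of the form $W(g\cdot(f\!\circ h))=W(f)(h)\,\mathcal E[g,h]$. You then observe the key geometric fact that $\gamma$ acts on the natural coordinate $\zeta$ of $\omega_n$ by a translation (since $\omega_n$ descends to $X$), which lets you factor the Wronskian of a test family $\Psi_i=\omega_n^{(1-n)/(2n)}\,g_i(\zeta)$ in two ways and cancel. This is cleaner and more explanatory: the identity \eqref{equivariant} is revealed as a cocycle/chain-rule statement for the functor $\mathcal E[\,\cdot\,,\,\cdot\,]$, with the translation invariance of the Wronskian doing the real work. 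The paper's induction, by contrast, is more elementary and self-contained (no natural coordinate, no test functions), but longer and less illuminating. Your remark that the $M_1/M_2$ ordering is ``routine'' is fair---the Wronskian characterization you use is unambiguous and matches the recursive properties (1)--(4) the paper actually invokes, so any apparent ordering discrepancy in the displayed definition of $E$ is immaterial to either proof.
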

\begin{proof}
With $M(\gamma) = (M_{ij})$ and $E(z) = (E_{ij})$ defined above we note that they satisfy the following properties. 

\begin{enumerate} 
    \item  $M_{00}(z) = \gamma'^{\frac{1-n}{2}}, \quad E_{00} = \omega_n^{\frac{1-n}{2n}}(z)$
    \item  for $i >j, \mbox{we have}  \quad M_{ij} = 0, \quad E_{ij} = 0$

    \item for $ i \leq j, \mbox{we have}  \quad M_{ij} = M_{i,j-1}'(z)+ M_{i-1,j-1}(z)\gamma'(z)$
    \item for $ i \leq j, \mbox{we have}  \quad E_{ij} = E_{i,j-1}'(z) + E_{i-1,j-1}(z)\omega_n^{\frac{1}{n}}(z).$

\end{enumerate}

We want to show that $E(\gamma(z))M(\gamma) = E(z)$. We show it for each entry. We will prove $$E_{ij}(z) = (E(\gamma(z))M(\gamma))_{ij}$$ for $i \leq j$ by induction on $j$. For $j = 0$ we have $$ E_{00}(z) =\omega_n^{\frac{1-n}{2n}}(z) = \omega_n^{\frac{1-n}{2n}}(\gamma(z))\gamma'^{\frac{1-n}{2}} = (E(\gamma(z))M(\gamma))_{00}.$$ Now fix $j$ and suppose that for all $i \leq j$ we have $$E_{ij}(z) = (E(\gamma(z))M(\gamma))_{ij}$$ we will show that $$E(z)_{i,j+1} = (E(\gamma(z))M(\gamma))_{i,j+1}.$$ 
Using the property (4) we know that $$E_{i,j+1}(z) = E'_{i,j}(z) + E_{i-1,j}(z)\omega_n^{\frac{1}{n}}(z).$$
By induction this equals
\begin{equation*}
    = ( \sum_{k=1}^j E_{ik}(\gamma(z))M_{kj}(z))' + \sum_{k= i-1}^j E_{i-1,k}(\gamma(z))M_{kj}(z)\omega_n^{\frac{1}{n}}(z)). 
\end{equation*}
Expanding the first term using the product rule 
\begin{multline*}
    = \sum_{k =i}^j E'_{ik}(\gamma(z))\gamma'(z) M_{kj}(z) 
+ \sum_{k=i}^j E_{ik}(\gamma(z))M'_{kj}(z) \\+ \sum_{k=1-1}^j E_{i-1,k}(\gamma(z))\omega_n^{\frac{1}{n}}(\gamma(z))\gamma'(z)M_{kj}(z).
\end{multline*}

We can now combine the first and last term
\begin{multline*}
    = \sum_{k=1}^j \left[ E'_{ik}(\gamma(z)) + E_{i-1,k}(\gamma(z))(\gamma(z)) \omega_n^{\frac{1}{n}}(\gamma(z)) \right] \gamma'(z) M_{kj}(z) \\+ \sum_{k =i}^j E_{ik}(\gamma(z))M'_{kj}(z) 
+ E_{i-1,i-1}(\gamma(z))\omega_n^{\frac{1}{n}}\gamma'(z)M_{i-1,j}(z).
\end{multline*}

Using property (4) above we find
\begin{multline*}
    = \sum_{k=1}^j E_{i,k+1}(\gamma(z)) \gamma'(z)M_{kj}(z) + \sum_{k=1}^j E_{ik}(\gamma(z))M_{kj}'(z) \\ + E_{i-1,i-1}(\gamma(z))\omega_n^{\frac{1}{n}}\gamma'(z)M_{i-1,j}(z).
\end{multline*}

Applying property (3) to the first terms yields
\begin{multline*}
    = \sum_{k=i}^j E_{i,k+1}(\gamma(z))\left[ 
M_{k+1,j+1}(z) - M'_{k+1,j}(z) \right] + \sum_{k=1}^j E_{ik}(\gamma(z))M'_{kj}(z) \\ + E_{i-1,i-1}(\gamma(z))\omega_n^{\frac{1}{n}}\gamma'(z)M_{i-1,j}(z).
\end{multline*}
After canceling out terms we are left with
\begin{multline*}
    = \left(\sum_{k=1}^j E_{i,k+1}(\gamma(z))M_{k+1,j+1}(z) \right) + E_{ii}(\gamma(z))M'_ij(z) - E_{i,j+1}(\gamma(z))M'_{j+1,j}(z) \\ + E_{i-1,i-1}(\gamma(z))M_{i-1,j}(z).
\end{multline*}

Shifting the index on the first term
\begin{multline*}
    = \left(\sum_{k = i+1}^{j+1} E_{ik}(\gamma(z))M_{k,j+1}(z)  \right) + E_ii(\gamma(z))M'_ij(z) - E_{i,j+1}(\gamma(z))M'_{j+1,j}(z) \\ + E_{i-1,i-1}(\gamma(z))M_{i-1,j}(z).
\end{multline*}
Adding and subtracting the term $E_{ii}(\gamma(z))M_{i,j+1}(z)$ gives us
\begin{multline*}
    = \sum_{k =1}^{j+1} E_{ik}(\gamma(z))M_{k,j+1}(z) - E_{ii}(\gamma(z))M_{i,j+1}(z) + \\E_{ii}(\gamma(z))M'_ij(z) - E_{i,j+1}(\gamma(z))M'_{j+1,j}(z) \\ + E_{i-1,i-1}(\gamma(z))M_{i-1,j}(z)
\end{multline*}
We can see that the first term is the $(i,j+1)$ of the matrix $E(\gamma(z))M(\gamma)$

\begin{multline*}
    = (E(\gamma(z))M(\gamma))_{i,j+1} + E_{ii}(\gamma(z))M_{ij}(z) + E_{i-1,j-1}(\gamma(z))\omega_n^{\frac{1}{n}}(\gamma(z))\gamma'(z)M_{i-1,j} \\- E_{ii}(\gamma(z))M_{i,j+1}(z).
\end{multline*}
Now we can combine the two $E_{ii}$ terms
\begin{multline*}
    = (E(\gamma(z))M(\gamma))_{i,j+1} \\+ E_{ii}(\gamma(z))\left[ M'_{ij}(z) -M_{i,j+1}(z) \right] + E_{i-1,i-1}(\gamma(z))\omega_n^{\frac{1}{n}}(\gamma(z))'\gamma'(z)M_{i-1,j}(z).
\end{multline*}
Applying  property (3) above and then factoring gives us the following
\begin{equation*}
    = (E(\gamma(z))M(\gamma))_{i,j+1} +\left[ E_{i-1,i-1}(\gamma(z))\omega_n^{\frac{1}{n}}(\gamma(z)) -E_{ii}(\gamma(z)) \right ] \gamma'(z)M_{i-1,j}(z).
\end{equation*}
Now property (4) tells us that the last term is 0, so we are left with
$$ = (E(\gamma(z))M(\gamma))_{i,j+1}$$
which is what we wanted to show. 

\end{proof}

Notice that $E(z)^{-1}$ is not defined for $z \in Z_{\omega_n}$, the zero set of $\omega_n$, so that $$Ep_t: \tilde{X} \setminus Z_{\omega_n} \rightarrow \mbox{SL}_n(\mathbb{C}) / \mbox{SU}(n),$$ is only defined in the complement $\tilde{X} \setminus Z_{\omega_n}$ of the zero set $Z_{\omega_n}$. This deficiency will not obstruct the extension of the limiting map $Ep$ across the zero set $Z_{\omega_n}$.

\begin{example}
For $n = 3$, if we let $q = \omega_3$ then we find that 
$$ Ep_t(z) = W_t(z)\frac{1}{q^{\frac{1}{3}}}\left( \begin{array}{ccc} 1 & 0 & 0 \\ \frac{q'}{3} & q^{\frac{1}{3}} & 0 \\ \frac{q''}{3q} - (\frac{2q'}{3q})^2 & \frac{q'}{3} & q^{\frac{2}{3}} \end{array} \right)^{-1}$$
\end{example}

The next claim is that $Ep_t$ induces a map $Ep$ to the asymptotic cone on the entire  universal cover $\tilde{X}$.

\begin{theorem*}
The family of maps $Ep_t$ induce a map $$Ep: \tilde{X} \rightarrow \mbox{Cone}( SL_n(\mathbb{C}) / SU(n), \{ \mbox{I}_{n} \}, \{ k^{\frac{1}{n}} \} ) $$ such that for each $z \notin Z_{\omega_n}$ there exist an open set $U$ containing z so that $Ep(U) \subset A$, an apartment in Cone$(SL_n(\mathbb{C}) / SU(n))$.
\end{theorem*}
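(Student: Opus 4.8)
The plan is to extract from Lemma~\ref{lem1} a normal form $Ep_t(z)=L_t(z)B_t(z)$ in which the diagonal matrix $L_t(z)$ carries all the unbounded behaviour while $B_t(z)$ is negligible on the scale $k^{1/n}$, to read off from it both the distance $d(\mathrm{I}_n,Ep_t(z))$ and the limiting point in the cone, and finally to identify the image of a small neighbourhood of $z_0\notin Z_{\omega_n}$ with an affine slice of one apartment; the extension of $Ep$ over $Z_{\omega_n}$ is then forced by Lemma~\ref{lem3}.

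First I would fix $z_0\notin Z_{\omega_n}$, pick a natural coordinate $\zeta=\int\omega_n^{1/n}$ on a small neighbourhood $U_0$, and invoke Lemma~\ref{lem1}: on $U_0$ there is a matrix solution of \eqref{eq2} of the form $Y(z,\epsilon)=\hat Y(z,\epsilon)\,e^{t^{1/n}\mathrm{diag}(\zeta,\lambda\zeta,\dots,\lambda^{n-1}\zeta)}$ with $\hat Y(z,\epsilon)\to\omega_n^{\frac{1-n}{2n}}P_0(z)$, so that $\hat Y^{\pm1}$ stays bounded on $\overline{U_0}$ uniformly in $\epsilon$. Untwisting $Y=\mathrm{diag}(1,\epsilon,\dots,\epsilon^{n-1})W_t^T$ and rescaling the basis so that $\det W_t\equiv1$ yields
\begin{equation*}
W_t(z)=L_t(z)\,B_t(z),\qquad L_t(z)=\mathrm{diag}_i\!\big(e^{t^{1/n}\lambda^i\zeta(z)}\big),
\end{equation*}
where $B_t(z)$ is $\hat Y(z,\epsilon)^T$ times the fixed diagonal matrix $\mathrm{diag}(1,t^{1/n},\dots,t^{(n-1)/n})$ and the scalar that normalises the determinant; in particular $\log\|B_t(z)^{\pm1}\|=O(\log t)$, uniformly for $z\in\overline{U_0}$. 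Since $E(z)=M_1(\omega_n^{\frac{1-n}{2n}})M_2(\omega_n^{1/n})$ has entries polynomial in $\omega_n^{\pm1/n}$ and its derivatives, $E(z)^{\pm1}$ is bounded on $\overline{U_0}$, so the same estimate holds for $Ep_t(z)=W_t(z)E(z)^{-1}=L_t(z)\big(B_t(z)E(z)^{-1}\big)$. (The correction $E$ enters here only through this boundedness; its actual job --- making $Ep_t$ genuinely $\rho_t$-equivariant --- was done in the preceding lemma.)

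Next I would pass to the cone. Because $\mathrm{SL}_n(\mathbb{C})$ acts isometrically on the left of the symmetric space, $d\big([Ep_t(z)],[L_t(z)]\big)=d_{\mathrm{sym}}\big(\mathrm{I}_n,B_t(z)E(z)^{-1}\big)=O(\log t)$, while $[L_t(z)]=[\mathrm{diag}(e^{t^{1/n}\mathrm{Re}(\lambda^i\zeta(z))})]=f\big(t^{1/n}\vec v(z)\big)$, where $\vec v(z)=\big(\mathrm{Re}(\lambda^i\zeta(z))\big)_{i=0}^{n-1}$ and $f$ is the flat of Section~\ref{bg}. Hence $d(\mathrm{I}_n,Ep_t(z))=t^{1/n}\|\vec v(z)\|+O(\log t)$, so $d(\mathrm{I}_n,Ep_t(z))/k^{1/n}$ is bounded on $U_0$, the sequence $\big(Ep_t(z)\big)_t$ represents a well-defined point $Ep(z)$ in $\mathrm{Cone}(\mathrm{SL}_n(\mathbb{C})/\mathrm{SU}(n),\{\mathrm{I}_n\},\{k^{1/n}\})$, and in fact $Ep(z)=F(\vec v(z))$ for the apartment map $F\colon A=\{\vec x:\sum x_i=0\}\to\mathrm{Cone}$ of Section~\ref{bg} (in the frame given by the local solution basis).

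Since $\sum_{i=0}^{n-1}\lambda^i=0$ for $n\ge2$ we have $\vec v(z)\in A$, and $z\mapsto\vec v(z)$ is real-affine in the holomorphic coordinate $\zeta$, hence continuous; as $F$ embeds $A$ isometrically onto an apartment, $Ep(U_0)=F(\vec v(U_0))\subset F(A)$, which is the apartment asserted in the theorem, and shrinking $U_0$ gives this at every point of $\tilde X\setminus Z_{\omega_n}$. At a zero $z_1\in Z_{\omega_n}$, where $Ep_t$ itself is undefined, I would repeat the computation with Lemma~\ref{lem3} (and Lemma~\ref{lem2}) in place of Lemma~\ref{lem1}: the exponents $\lambda^i\zeta$ become $\lambda^i\frac{n}{n+1}\zeta^{\frac{n+1}{n}}$, which extend continuously across $\zeta=0$ with value $0$, so $d(\mathrm{I}_n,Ep_t(z))/k^{1/n}$ remains bounded near $z_1$ and tends to $0$ as $z\to z_1$; checking that the limiting map is continuous there shows $Ep$ extends to all of $\tilde X$, with $Ep(z_1)$ the common point of the finitely many apartments coming from the sectors of $U_0\setminus\{z_1\}$ --- the sub-building of the introduction. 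The main obstacle I expect is precisely this last step: producing a clean normal form near a zero from Lemma~\ref{lem3}, where the natural coordinate has a branch point, so that $Ep$ is continuous across $Z_{\omega_n}$ and a neighbourhood of a point \emph{near} (but not at) a zero still maps into a single apartment; the estimates on $\tilde X\setminus Z_{\omega_n}$ are routine once the factorisation $W_t=L_tB_t$ is established.
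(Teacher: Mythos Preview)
Your local normal form $Ep_t=L_tB_tE^{-1}$ on $U_0$, the identification $Ep(z)=F(\vec v(z))$ in an apartment, and the Lipschitz extension across $Z_{\omega_n}$ via Lemma~\ref{lem3} are all essentially what the paper does.

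The gap is in the passage from the single neighbourhood $U_0$ to all of $\tilde X\setminus Z_{\omega_n}$. Your estimate $d(\mathrm I_n,Ep_t(z))=t^{1/n}\|\vec v(z)\|+O(\log t)$ holds only when the fixed basis defining the global Wronskian $W_t$ happens to coincide with the local basis produced by Lemma~\ref{lem1} on $U_0$. At any other point these two Wronskians differ by a left factor $C(k)\in\mathrm{SL}_n(\mathbb C)$, so that the genuine equivariant map reads $Ep_t(z)=C(k)L_t(z)B_t(z)E(z)^{-1}$; since $C(k)$ acts by isometry but does not fix $\mathrm I_n$ and is not a~priori bounded in $k$, you cannot bound $d(\mathrm I_n,Ep_t(z))/k^{1/n}$ from your factorisation, and ``shrinking $U_0$'' to a new centre simply replaces $C(k)$ by another uncontrolled $C'(k)$. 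The paper closes this by computing the \emph{relative} distance $d(Ep_k(y),Ep_k(z))$, in which $C(k)$ cancels because it is an isometry, and then chaining: cover a path from the basepoint $z_0$ to $z$ by finitely many overlapping neighbourhoods, pick points $y_i$ in consecutive overlaps, and sum the local bounds via the triangle inequality to get $d(Ep_k(z_0),Ep_k(z))/k^{1/n}<\infty$. Once you insert this chaining step your argument becomes the paper's; without it the claim that $(Ep_k(z))_k$ defines a point of the cone is unproved away from $U_0$.
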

\begin{proof}
The family of maps $Ep_t$ is a one-parameter family of maps and a point $p$ in the asymptotic cone is a sequence of points $p = \{p_k \} \subset \mbox{SL}_n(\mathbb{C}) /\mbox{SU}(n)$ such that $$ \frac{d(\mbox{I}_{n},p_k)}{k^{\frac{1}{n}}} < \infty.$$ If we set $t = k$ then we may define, for $z \in \tilde{X} \setminus Z_{\omega_n},$ the map $$Ep: \tilde{X} \setminus Z_{\omega_n}  \longrightarrow \mbox{Cone}(\mbox{SL}_n(\mathbb{C}) /\mbox{SU}(n))$$ $$Ep(z) = (Ep_k(z)).$$ We need to show that for $z \in \tilde{X} \setminus Z_{\omega_n}$,  the map $ Ep(z) = (Ep_k(z)) \in \mbox{Cone}(\mbox{SL}_n(\mathbb{C}) /\mbox{SU}(n))$. That is to say, we need to show $$\frac{d(\mbox{I}_{n},Ep_k(z))}{k^{\frac{1}{n}}} < \infty.$$ To show this we will find a finite collection of points $y_i$ so taht $y_0 = z_0$, $y_N = z$ and $$\frac{d(Ep_k(y_i),Ep_k(y_{i+1}))}{k^{\frac{1}{n}}} < \infty$$ which implies that $$\frac{d(Ep_k(y_0),Ep_k(z))}{k^{\frac{1}{n}}} = \frac{d(\mbox{I}_n,Ep_k(z))}{k^{\frac{1}{n}}} < \infty.$$To begin, note that for each $y \in \tilde{X} \setminus 
Z_{\omega_n}$ by Lemma \ref{lem1} we know there exists an open set $U$ containing y and a solution $W_y(z,k)$ so that for $z \in U$ we have \begin{equation} \label{eq6}
\lim_{k \to \infty} e^{-k^{\frac{1}{n}}\int_y^z B_0(w) dw}W_y(z,k) = \omega_n^{\frac{1-n}{2n}}P_0(z). 
\end{equation} Note the the order is reversed because $W_t^T$ satisfies \eqref{eq2} and $e^{-k^{\frac{1}{n}}\int_y^z B_0(w) dw}$ is diagonal. We know that $Ep_k(z) = W_k(z)E^{-1}(z) = C(k)W_y(z,k)E^{-1}(z)$ for some matrix $C(k)$ independent of $z$ because $W_k(t)$ and $W_y(z,k)$ solve the same differential equation. Writing the map $Ep_t$ in terms of $W_y$ we find,

\begin{equation*}
\begin{split}
\frac{d(Ep_k(z),Ep_k(y))}{k^\frac{1}{n}} = & \frac{d( C(k)W_y(z,k)E^{-1}(z), C(k)W_y(y,k)E^{-1}(y))}{k^\frac{1}{n}} \\ 
= & \frac{d(W_y(z,k)E^{-1}(z),W_y(y,k)E^{-1}(y))}{k^\frac{1}{n}} 
\end{split}
\end{equation*} where the last inequality is true because $C(k)$ is an isometry on the symmetric space. Equation \eqref{eq6} shows that $W_y(y,k)$ approaches $\mbox{I}_{n}$ as $k \to \infty$ and $W_y(z,k)$ approaches $ e^{-k^{\frac{1}{n}}\int_y^z B_0(w) dw}$ as $k \to \infty$. This implies that $$d(W_y(z,k)E^{-1}(z),W_y(y,k)E^{-1}(y)) \to d(\mbox{I}_{n},e^{-k^{\frac{1}{n}}\int_y^z B_1(w) dw}) = k^\frac{1}{n}\left|\int_y^z B_1(w) dw\right|.$$
This in turns gives that us that for each $y \in \tilde{X} \setminus Z_{\omega_n}$, there is an open set $U$ so that for each $z \in U$ we have 
$$\lim_{k \to \infty} \frac{d(Ep_k(z),Ep_k(y))}{k^\frac{1}{n}} = \sqrt{2}\mbox{Re}\left(\int_y^z \omega_n^{\frac{1}{n}} \right) < \infty, \quad \mbox{for } n = 2$$
$$\lim_{k \to \infty} \frac{d(Ep_k(z),Ep_k(y))}{k^\frac{1}{n}} = \sqrt{\frac{n}{2}}\left|\int_y^z \omega_n^{\frac{1}{n}} \right| < \infty, \quad \mbox{for } n > 2$$
Now for any $y \in \tilde{X} \setminus Z_{\omega_n}$, take a path $\gamma$ from $z_0$ to $y$ and cover $\gamma$ with finitely many such sets. Pick a sequence of points $y_0 = z_0, y_1, \cdots, y_N = y$ such that $y_i, y_{i+1}$ are contained in a single open set. Then we have 

\begin{equation*}	
\lim_{k \to \infty} \frac{d(Ep_k(y_i),Ep_k(y_{i+1}))}{k^\frac{1}{n}} < \infty \Rightarrow \lim_{k \to \infty}\frac{d(Ep_k(z_0),Ep_k(y))}{k^\frac{1}{n}} = \lim_{k \to \infty}\frac{d(\mbox{I}_{n},Ep_k(y))}{k^\frac{1}{n}}< \infty 
\end{equation*}
So that $(Ep_k(y)) \in \mbox{Cone}(\mbox{SL}_n(\mathbb{C}) / \mbox{SU}(n))$. This shows that $Ep_t$ induces a map $$Ep: \tilde{X} \setminus Z_{\omega_n} \rightarrow \mbox{Cone}(\mbox{SL}_n(\mathbb{C}) / \mbox{SU}(n)).$$ Similarly we can show that the sequence $$C(k)\mbox{diag}(e^{x_1},\cdots,e^{x_n})$$ induces a map $$A \rightarrow \mbox{Cone}(\mbox{SL}_n(\mathbb{C})/\mbox{SU}(n))$$ of an apartment in the cone and the above shows that $Ep(U) \subset A$, and that the map Ep on $U$ is for $n \neq 2$, up to a constant, an isometry with respect to the $|\omega_n|^{\frac{1}{n}}$ metric. The last step to show is that the map $Ep$, which is defined on $\tilde{X} \setminus Z_{\omega_n}$, extends to $\tilde{X}$.
 
\begin{lemma} There exists a unique extension of the map $Ep$ to $\tilde{X}$. 
\end{lemma}
Let $p \in Z_{\omega_n}$ be given. Since $\omega_n$ has a simple zero by assumption, by Lemma \ref{lem3} there exists open set V containing p and a solution $W_p(z)$ so that there exists finitely many sectors $S_i$ so that for $x,y \in S_i$ 
$$d(Ep(x),Ep(y)) = \sqrt{2}\mbox{Re}\left(\int_y^z \omega_n^{\frac{1}{n}} \right) \quad \mbox{for } n = 2$$
$$d(Ep(x),Ep(y)) = \sqrt{\frac{n}{2}}\left| \int_x^y \omega_n^{\frac{1}{n}} \right| \quad \mbox{for } n > 2.$$ The last equation shows that $Ep$ restricted to $V \setminus \{ p \}$ is Lipschitz and because we are mapping to Cone$(\mbox{SL}_n(\mathbb{C}) /\mbox{SU}(n))$, which is a complete metric space, $Ep$ extends to $V$.

\end{proof}

The map $Ep$ was defined in terms of $Ep_t$ which depended on $\omega_n$, the Stokes data and solutions to \eqref{eq5}. But the limiting map $Ep: \tilde{X} \rightarrow \mbox{Cone(SL}_n(\mathbb{C})/\mbox{SU}(n))$ depends only on $\omega_n$ and the Stokes data of \eqref{eq5}. More precisely we have the following.
\begin{lemma}
There exists a space $E_{\omega_n}$, together with a map  $f: \tilde{X} \rightarrow E_{\omega_n}$ so that the map $Ep$ factors through $E_{\omega_n}$. That is to say there is a map $h: E_{\omega_n} \rightarrow \mbox{Cone(SL}_n(\mathbb{C})/\mbox{SU}(n))$ defined in terms of $\omega_n$ and Stokes data of \eqref{eq5}, so that $Ep = h \circ f$.
\end{lemma}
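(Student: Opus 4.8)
The plan is to assemble $E_{\omega_n}$ by gluing together the \emph{local models} furnished by Lemmas \ref{lem1}, \ref{lem2} and \ref{lem3}, and then to observe that both $f$ and the factoring map $h$ are forced, chart by chart, by the computations already carried out in Sections \ref{lems} and \ref{PROOF}. First I would fix an atlas of $\tilde X$: around each $y\notin Z_{\omega_n}$ the chart $U_y$ of Lemma \ref{lem1}, carrying the flat coordinate $\zeta=\int\omega_n^{1/n}$; around each zero $p$ the chart $V_p$ of Lemma \ref{lem3}, subdivided by the Stokes lines $l_{ij}=\{\operatorname{Re}((\lambda^i-\lambda^j)\zeta^{(n+k)/n})=0\}$ of \eqref{M2} into finitely many sectors $S_{p,j}$. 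To $U_y$ I attach the model piece $\mathcal A_y$, the image of an open set under $z\mapsto(\operatorname{Re}(\lambda^i\zeta(z)))_{i=0}^{n-1}$ into the model apartment $A=\{x\in\mathbb R^n:\sum x_i=0\}$ (this lands in $A$ since $\sum_i\lambda^i=0$); to $V_p$ I attach the union over $j$ of one such apartment-piece $\mathcal A_{p,j}$ per sector, adjacent pieces being glued along the half-apartment determined by the wall $l_{ij}$ according to the change of apartment induced by the Stokes matrix $A_{ij}=I_n+aE_{ij}$ (the standard folding of apartments across a wall in a building). On overlaps the transition maps are the apartment transition maps recorded in Section \ref{PROOF}: crossing a Stokes line $l_{\alpha\beta}$ born at a zero $q$ changes the apartment by $A_q=e^{-\frac{1}{\epsilon}\int_{x_k}^qB_0}A_{\alpha\beta}e^{-\frac{1}{\epsilon}\int_q^{x_{k+1}}B_0}$, whose limiting effect on the cone involves only $\omega_n$ and $A_{\alpha\beta}$. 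Let $E_{\omega_n}$ be the resulting metric space with its length metric; by construction it depends only on the divisor of $\omega_n$, the coordinates $\zeta$, the Stokes lines, and the Stokes matrices of \eqref{eq5}.

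The genuinely delicate point — the one I expect to be the main obstacle — is the cocycle condition for these transitions, equivalently the independence of the composed transition from the order in which a path meets the Stokes lines of several distinct zeros. This is exactly the content of the secondary-Stokes-line lemma of Section \ref{PROOF}: when walls $l_{\alpha\beta}$, $l_{\beta\delta}$ from zeros $p,q$ cross, one must adjoin the new wall $l_{\alpha\delta}$ with matrix $C_{\alpha\delta}=I_n+a\,e^{\frac{1}{\epsilon}\int_q^p(\lambda_\beta-\lambda_\delta)}E_{\alpha\delta}$, and the identity $B_qCA_p=A_pB_q$ is precisely what makes the transition maps of $E_{\omega_n}$ satisfy the cocycle identity; so the subdivision of charts must be refined to include all such secondary walls. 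Once the gluing is legitimate, define $f:\tilde X\to E_{\omega_n}$ to be the map which in each chart is the model map above; it is well defined by the gluing, it is locally injective for $n>2$ (the functionals $\operatorname{Re}(\lambda^i\zeta)$ already separate points of $\mathbb C$), and, every ingredient being natural, it intertwines the deck action on $\tilde X$ with the evident $\pi_1(X)$-action on $E_{\omega_n}$.

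Next I would define $h:E_{\omega_n}\to\operatorname{Cone}(\mathrm{SL}_n(\mathbb C)/\mathrm{SU}(n))$ chart by chart: each model apartment-piece is sent into the cone by the apartment map $F:A\to\operatorname{Cone}$, $\vec x\mapsto[\operatorname{diag}(e^{x_1},\dots,e^{x_n})]$ of Section \ref{bg}, precomposed with the cone-isometry $C(k)$ and translated to agree with $Ep$ at one marked point of the chart. These local recipes agree on overlaps because crossing a Stokes line changes the apartment inside $\operatorname{Cone}$ by exactly the matrix $A_q$ that defines the transition in $E_{\omega_n}$ — this is what the proof of Theorem \ref{thm1} establishes for the holonomy and what the proof of Theorem \ref{thm2} establishes for the equivariant maps $Ep_t$. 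Hence $h$ is well defined and continuous; away from $Z_{\omega_n}$ it is, up to the constant $\sqrt{n/2}$, a local isometry for the $|\omega_n|^{1/n}$ metric.

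Finally, to see $Ep=h\circ f$ it suffices to check it on each chart, where it is nothing but the local description of $Ep$ obtained in the proof of Theorem \ref{thm2}: near $y\notin Z_{\omega_n}$ the map $Ep$ is the $\zeta$-apartment map into $F(A)$ (modulo $C(k)$), and near a zero $p$ it is the sector-wise apartment maps glued by the Stokes matrices; these are exactly $h\circ f$ on $U_y$ and on $V_p\setminus\{p\}$. Since these charts cover $\tilde X\setminus Z_{\omega_n}$ and both sides are continuous, $Ep=h\circ f$ there, and the common extension across $Z_{\omega_n}$ follows as in the preceding lemma from completeness of the cone; uniqueness of $h$ on the image of $f$ is then automatic. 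The remaining bookkeeping — a consistent system of marked points and base translations, and the extension of $h$ over the images of the zeros — is routine.
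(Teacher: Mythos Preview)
Your approach works, but it places the Stokes information in a different spot than the paper does, and this changes the character of the construction. In the paper the space $E_{\omega_n}$ is built \emph{without} any Stokes data at all: for each simply connected $U\subset\tilde X\setminus Z_{\omega_n}$ one takes the $n$ natural coordinates $\zeta_1,\dots,\zeta_n$ (the branches of $\int\omega_n^{1/n}$), sets $E_U=\operatorname{Re}\bigl(\prod_i\zeta_i(U)\bigr)\cap A$, and glues the pieces $E_U$ only by the overlap identifications coming from $U\cap V$; the map $f$ is just $z\mapsto\operatorname{Re}(\zeta_1(z),\dots,\zeta_n(z))$. All of the Stokes machinery --- the matrices $A_{ij}$, the secondary walls, the cocycle issue you flag as the delicate point --- is loaded entirely into $h$, which on a piece $E_U$ is $x\mapsto\bigl(M(k)\,e^{k^{1/n}\operatorname{diag}(x)}\bigr)$ with $M(k)=\prod_iE(\gamma_i,\omega_n)A_i$ the transport matrix of Section~\ref{PROOF}. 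Well-definedness of $h$ then comes for free from the analysis there together with the fact that $Ep$ is already well defined; no separate cocycle check on $E_{\omega_n}$ is needed.

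Your route instead bakes the Stokes transitions into the gluing of $E_{\omega_n}$ itself, so that $E_{\omega_n}$ is essentially a combinatorial copy of the sub-building $Ep(\tilde X)$ and $h$ is close to an inclusion. This buys you a more geometric model space, at the cost of having to verify the secondary-Stokes cocycle identity at the level of the abstract gluing rather than inheriting it from $Ep$. The paper's separation --- $E_{\omega_n}$ and $f$ from $\omega_n$ alone, Stokes data only in $h$ --- is simpler, matches the notation $E_{\omega_n}$ more literally, and ties in directly with the spectral-cover constructions of \cite{katzarkov2015harmonic,katzarkov2015constructing} that the paper cites.
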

\begin{proof}
Given $\omega_n$ we can construct $E_{\omega_n}$ in the following way. For each $U \subset \tilde{X}$ not containing a zero and simply connected we have $n$ natural coordinates functions $\zeta_i$. For each such $U$ we have a space $E_U$ and a map $f_U: U \rightarrow E_U$ defined as $$E_U = Re(\prod_{i=1}^n\zeta_i(U))\cap A,$$ $$f_U(z) = Re(\zeta_1(z),\cdots,\zeta_n(z)), $$where $A \subset \mathbb{R}^n$ is the subspace of $\mathbb{R}^n$ with sum of coordinates equal to 0. The space $E_{\omega_n}$ is an identification space, $$E_{\omega_n} = (\coprod_{U} E_U)/ \sim,$$ where $x \sim y$  if $x \in E_U, y \in E_V$ and there exists $p_1,\cdots,p_n \in U\cap V$ so that $x = Re(\zeta_1(p_1),\cdots,\zeta_n(p_n)), y = Re(\zeta_1(p_1),\cdots,\zeta_n(p_n))$. The space $E_{\omega_n}$ is given the quotient topology. We see that $f_U$ extends to a map $f: \tilde{X} \rightarrow E_{\omega_n}$. The space $E_{\omega_n}$ is a subspace of a space constructed in \cite{katzarkov2015harmonic} and \cite{katzarkov2015constructing}.

We now want to show that there is a map $h: E_{\omega_n} \rightarrow \mbox{Cone(SL}_n(\mathbb{C})/\mbox{SU}(n))$ that depends only on $\omega_n$ and the Stokes data of \eqref{eq5}. For the base point $z_0$, we have an open set $U_0$ so that for $z \in U_0$, we have $$Ep(z) = (e^{k^\frac{1}{n}\int_{z_0}^z B_0(t)dt}).$$ We can define $h|_{E_{U_0}}$ as $h(z) = (e^{k^\frac{1}{n}\mbox{diag}(x_1,\cdots,x_n)}).$ For any $y \in \tilde{X}$ take a path from $z_0$ to $y$. By the same argument as in Theorem \ref{thm2}, there exists an open set $U$ of $z$ and a matrix $M(\epsilon)$ of the form $$ M(\epsilon) = \prod_{i=1}^{N+1} E(\gamma_{i} ,\omega_{n} )A_i,$$ so that we have matrix solution $W_t(z) = M(\epsilon)Y_N(z,t)$ so that \begin{equation*}  \lim_{\epsilon \to 0}\left(e^{-\frac{1}{\epsilon}\int_{y}^z B_0(z)}Y_N(z,\epsilon)\right) =\omega_n^{\frac{1-n}{2n}}P_0 \mbox{ with } z \in U_N. 
\end{equation*}
For $z \in U$ we see that $$Ep(z) = (M(k)e^{k^\frac{1}{n}\int_{y}^z B_0(t)dt}),$$ where the matrix $M(k)$ depends only on $\omega_n$ and the Stokes data of \eqref{eq5}.
 We can define $h|_{E_{U}}$ as $h(z) = (M(k)e^{k^\frac{1}{n}\mbox{diag}(x_1,\cdots,x_n)}).$  
 
\end{proof}
The map $Ep$ locally is given by the model equations. We can compute local properties of the map. In particular we have the following proposition.  
\begin{prop}
The map $Ep: \tilde{X} \rightarrow \mbox{Cone(SL}_n(\mathbb{C})/\mbox{SU}(n))$ is locally injective for n $>$ 2.
\end{prop}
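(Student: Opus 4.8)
The plan is to localize the question and reduce both the generic case and the case of a zero to the distance formula already extracted in the proof of Theorem~\ref{thm2}.

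\emph{Away from the zeros.} Fix $z_0\notin Z_{\omega_n}$. As in the proof of Theorem~\ref{thm2}, on a small chart $U\ni z_0$ taken to be a disk in the natural coordinate $\zeta$ (with $d\zeta=\omega_n^{1/n}$) the map has the form $Ep(z)=\big(M(k)\,e^{k^{1/n}\int_{z_0}^z B_0}\big)$ with $M(k)$ independent of $z\in U$; since left translation by $M(k)$ is an isometry of the symmetric space and $\int_{z_0}^z B_0=(\zeta(z)-\zeta(z_0))\,\mathrm{diag}(1,\lambda,\dots,\lambda^{n-1})$, one gets for $z,z'\in U$
\[
 d\big(Ep(z),Ep(z')\big)\;=\;\lim_{k\to\infty}\frac{d\big(\mathrm I_n,\,e^{k^{1/n}(\zeta(z')-\zeta(z))\,\mathrm{diag}(1,\lambda,\dots,\lambda^{n-1})}\big)}{k^{1/n}}\;=\;\sqrt{\tfrac n2}\,\big|\zeta(z')-\zeta(z)\big|,
\]
the last step using $\sum_{j=0}^{n-1}\lambda^{2j}=0$, which holds precisely because $n>2$. (For $n=2$ the identical computation gives $\sqrt2\,|\mathrm{Re}(\zeta(z')-\zeta(z))|$, which degenerates along horizontal trajectories — as it must, since there $Ep$ factors through an $\mathbb R$-tree — so local injectivity genuinely fails.) Hence, after shrinking $U$ so that $\zeta|_U$ is a homeomorphism onto a disk, $Ep|_U$ is, up to the constant $\sqrt{n/2}$, an isometric embedding, in particular injective.

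\emph{Near a simple zero.} Let $z_0\in Z_{\omega_n}$ and let $w=\tfrac{n}{n+1}\zeta^{(n+1)/n}$ be the natural coordinate vanishing at $z_0$ (so $\omega_n=\zeta\,d\zeta^n$, $w(z_0)=0$). By Lemma~\ref{lem3} there is a neighborhood $V$ of $z_0$ whose punctured version $V\setminus\{z_0\}$ is a union of finitely many cyclically ordered open Stokes sectors $S_1,\dots,S_m$, each star-shaped about $z_0$ in the $w$-coordinate, on each of which the proof of Theorem~\ref{thm2} gives $d\big(Ep(x),Ep(y)\big)=\sqrt{n/2}\,\big|\int_x^y\omega_n^{1/n}\big|$; thus $Ep|_{\bar S_i}$ is, up to scale, an isometric embedding into an apartment $A_i$ of the building, carrying the radial segments out of $z_0$ to geodesic rays from $Ep(z_0)$, and letting $x\to z_0$ gives $d\big(Ep(x),Ep(z_0)\big)=\sqrt{n/2}\,|w(x)|$ for every $x\in V$. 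In particular $Ep$ is injective on each closed sector. To finish, suppose $Ep(x)=Ep(y)$ with $x\ne y$; if $x,y$ lie in a common $\bar S_i$ we are done, so assume $x\in S_i$, $y\in S_j$ with $i\ne j$. Then $|w(x)|=|w(y)|$, and the radial segments $[z_0,x]$, $[z_0,y]$ map to two geodesic segments of $\mathrm{Cone}(SL_n(\mathbb C)/SU(n))$ of equal length joining $Ep(z_0)$ to the common point $Ep(x)=Ep(y)$; since the cone is CAT$(0)$ these coincide, so the two segments share the same initial direction in the link of $Ep(z_0)$.

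Everything therefore reduces to the claim — which I regard as the crux — that the map induced by $Ep$ from the link of $z_0$ (a round circle of circumference $2\pi\tfrac{n+1}{n}$ in the flat metric $|\omega_n|^{1/n}$) to the link of $Ep(z_0)$ in the spherical building is \emph{injective}: granting it, equal initial directions force $x,y$ onto a common ray out of $z_0$, hence into a common $\bar S_i$, and then $x=y$ by the previous paragraph, a contradiction. This claim should follow from the structure of the Stokes data: consecutive apartments $A_i,A_{i+1}$ are glued exactly along the image of their common Stokes ray, and each Stokes matrix being of the special form $\mathrm I_n+aE_{\alpha\beta}$ means that crossing that ray amounts to passing across the wall of the root $\lambda_\alpha-\lambda_\beta$ into the adjacent Weyl chamber, so that the $m$ sector-images assemble into an embedded ``fan'' — a locally geodesic loop of length $2\pi\tfrac{n+1}{n}<4\pi$ about $Ep(z_0)$ with no self-overlap. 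Equivalently, one can argue through the factorization $Ep=h\circ f$ of the preceding lemma, verifying that $f:\tilde X\to E_{\omega_n}$ is locally injective and that $h$ is injective near $f(z_0)$, using the explicit local-coordinate description of $E_{\omega_n}$. Making this non-overlap precise — controlling how the finitely many apartments sit around $Ep(z_0)$ and ruling out accidental coincidences among directions coming from the interiors of distinct sectors, i.e.\ that the spectral network of $\omega_n$ near a simple zero embeds — is the main obstacle; away from that point the argument is just the isometric-embedding observation above.
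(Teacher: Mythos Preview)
Your treatment of the generic case matches the paper's: both obtain the distance formula $d(Ep(z),Ep(z'))=\sqrt{n/2}\,\big|\int_{z}^{z'}\omega_n^{1/n}\big|$ on a natural-coordinate chart and conclude that $Ep$ is, up to a constant, an isometric embedding there, hence injective.

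At a simple zero you leave a genuine gap, and you flag it yourself. The reduction via CAT(0) uniqueness of geodesics to injectivity of the link map is valid, but the ``embedded fan'' heuristic is not a proof: the link circle has $|\omega_n|^{1/n}$-length $2\pi(n+1)/n>2\pi$, so CAT(1) injectivity-radius arguments do not force an embedding, and the observation that each Stokes matrix $\mathrm I_n+aE_{\alpha\beta}$ crosses one wall does not by itself exclude a return after several walls. Routing through $Ep=h\circ f$ does not help either, since you have no independent local-injectivity statement for $h$ near $f(z_0)$.

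The paper closes this gap by a sharper use of Lemma~\ref{lem3} that your sector-by-sector picture obscures. You treat each single Stokes sector as landing in its own apartment $A_i$; the paper observes instead that the asymptotic expansion --- and hence the isometric embedding into \emph{one} flat $\mathcal F$ --- is valid on any $\zeta$-sector of opening $<\tfrac{n\pi}{n+1}$, i.e.\ on any \emph{$n$ consecutive} Stokes sectors at once. Crossing one further Stokes line multiplies by a Stokes matrix $A$, and the flat $A\mathcal F$ is asymptotic to $\mathcal F$ along a half-space, so in the asymptotic cone they coincide; thus $Ep$ carries any $n+1$ consecutive sectors --- a full half-plane --- isometrically into a single apartment. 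Since there are only $2(n+1)$ sectors in all, the only possible collision left is between $S_k$ and its antipode $S_{k+n+1}=-S_k$. That is dispatched by a direct distance comparison: choose $z_1\in S_k$ and $z_2\in S_{k+n}$ on the same $|\zeta|$-circle; each of the pairs $(z_1,z_2)$ and $(-z_1,z_2)$ lies in some half-plane, so both cone distances equal the corresponding $|\omega_n|^{1/n}$-distances, and these are visibly different. No link-level or spectral-network argument is needed.
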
 
\begin{proof}
For each $y \in \tilde{X} \setminus 
Z_{\omega_n}$ by Lemma \ref{lem1} we know there exists an open set $U$ containing y and a solution $W_y(z,k)$ so that for $z \in U$ we have $$ \lim_{k \to \infty} e^{-k^{\frac{1}{n}}\int_y^z B_0(w) dw}W_y(z,k) = \omega_n^{\frac{1-n}{2n}}P_0(z).$$ We know that since $W_k(z)$ and $W_y(z,k)$ solve the same differential equation we have $$W_k(z) = M(k)W_y(z,k)$$ for some matrix $M(k)$. This means that for $z \in U$ we have $$Ep(z) = W_k(z)E^{-1}(z) = M(k)W_y(z,k)E^{-1}(z).$$ So that 
\begin{align*} 
\frac{d(Ep(z),M(k)e^{k^{\frac{1}{n}}\int_y^z B_0(w) dw})}{k^{\frac{1}{n}}} &=  \frac{d(M(k)W_y(z,k)E^{-1}(z),M(k)e^{k^{\frac{1}{n}}\int_y^z B_0(w) dw})}{k^{\frac{1}{n}}} \\ 
 &= \frac{d(e^{-k^{\frac{1}{n}}\int_y^z B_0(w) dw}W_y(z,k)E^{-1}(z),\mbox{I}_n)}{k^{\frac{1}{n}}} < \infty
\end{align*}
This implies that for $z \in U$, we have $Ep(z) = (M(k)e^{k^{\frac{1}{n}}\int_y^z B_0(w) dw})$. Up to an isometry we have that Ep: $ U \rightarrow \mathbb{R}^{n-1} \subset$ Cone(SL$_n(\mathbb{C})$/SU$(n)$) given by $$Ep(z) = Re\left(\begin{bmatrix} \int_y^z \omega_n^{\frac{1}{n}} dw \\ \vdots \\ \int_y^z \lambda^{(n-1)}\omega_n^{\frac{1}{n}}dw\end{bmatrix}\right).$$ This is locally injective when $z$ is not a zero of $\omega_n$ since for $x, y \in U$ we have $$d(Ep(x),Ep(y)) = \sqrt{\frac{n}{2}}\left| \int_x^y \omega_n^{\frac{1}{n}} \right|.$$ 

For $y \in Z_{\omega_n}$ a zero of $\omega_n$, by Lemma \ref{lem3}  we know there exists an open set $U$ containing y and a solution $W_y(z,k)$ so that 
\begin{equation} \label{inject}
\lim_{t \to \infty}\left(Y(z,t)e^{-t^{\frac{1}{n}}\int B_0(z)}\right) < \infty 
\end{equation} for z lying in a sector with $\zeta$-angle $< \frac{n\pi}{(n+1)}$ and has the same Stokes data as \ref{lem2}. The Stokes lines are $l_k = \{ arg({\zeta}) = \frac{k\pi}{n+1}\}$ and divide $U$ into 2($n+1$) sectors, say $S_k$. As above the Ep will be locally injective as long as the limit \ref{inject} holds. So Ep will be locally injective on the union of any $n$ adjacent sectors, $\bigcup_{k=1}^n S_{i+k}$. We have that Ep maps $S_k, \cdots, S_{n-1+k}$ to a single flat, $\mathcal{F}$. Now there is a Stokes matrix $A$ so that Ep maps sector $S_{n+k}$ to the flat, $A\mathcal{F}$, but $A\mathcal{F}$ and $\mathcal{F}$ are asymptotic along a half space so in Cone($\mbox{SL}_n(\mathbb{C})/\mbox{SU}(n))$, the map Ep maps $S_k,\cdots, S_{n+k}$ to a single flat and so Ep is injective on any $n+1$ adjacent sectors or any half plane. Since there are only 2$(n+1)$ total sectors we need to show that $S_k$ and $S_{n+k+1} = -S_k$ do not map to the same flat. Let $\zeta$ be a natural coordinate so that $\omega_n = \zeta d\zeta^n$ and  choose $z_1$ so that $\zeta(z_1) = r e^{\frac{(2k+1)\pi}{2n}} \in S_k$. If Ep maps $S_k$ and $-S_k$ to the same flat then we would have $$Ep(z) = Ep(-z).$$ Choose $z_2$ so that  $\zeta(z_2) = r e^{\frac{(2(k+n)-1)\pi}{2n}} \in S_{k+n}.$ Then since $z_1$ and $z_2$ are within $n+1$ adjacent sectors we have $$d(Ep(z_1),Ep(z_2)) = 2r$$ the distance in the $|\omega_n|^{\frac{1}{n}}$ metric. On the other hand $z_2$ and $-z_1$ are in adjacent sectors so we have $$d(Ep(-z_1),Ep(z_2)) = 2sin(\frac{\pi}{2n}).$$ So Ep(z) $\neq$ Ep(-z) which implies that Ep is locally injective.
\end{proof}
\bibliographystyle{plain}

\end{document}